\newtheorem{assumption}{Assumption}
\newtheorem{algorithm}{Algorithm}
\def\mc{\multicolumn}
\def\Gnorm#1{ \|#1\|_G }
\def\calW{ {\cal W} }
\def\calB{ {\cal B} }
\def\bu{ {\bf u} }
\def\calL{ {\cal L} }
\def\calQ{ {\cal Q} }
\begin{document}
\title{Inertial primal-dual algorithms for structured convex optimization}

\author{Raymond H. Chan\footnotemark[1] \and Shiqian Ma\footnotemark[2]  \and  Junfeng Yang\footnotemark[3]}
\renewcommand{\thefootnote}{\fnsymbol{footnote}}

\footnotetext[1]{Department of Mathematics, The Chinese University of Hong Kong, Hong Kong (Email: {\tt rchan@math.cuhk.edu.hk}).}% This author was supported by xxxx.}

\footnotetext[2]{Department of Systems Engineering and Engineering Management, The Chinese University of Hong Kong, Hong Kong (Email: {\tt sqma@se.cuhk.edu.hk}). This author was supported by Hong Kong Research Grants Council General Research Fund Early Career Scheme (Project ID: CUHK 439513).}

\footnotetext[3]{Corresponding author (Email: {\tt jfyang@nju.edu.cn}). Department of Mathematics, Nanjing University, China.
This author was supported by the Natural Science Foundation of China NSFC-11371192.
The work was done while this author was visiting the Chinese University of Hong Kong.}

\renewcommand{\thefootnote}{\arabic{footnote}}
\date{\today}
\maketitle

\begin{abstract}
The primal-dual algorithm recently proposed by Chambolle \& Pock (abbreviated as CPA) for structured convex optimization is very efficient and popular.
It was shown by Chambolle \& Pock in \cite{CP11} and also by Shefi \& Teboulle in \cite{ST14} that CPA and variants  are closely related
to preconditioned versions of the popular alternating direction method of multipliers (abbreviated as ADM).
In this paper, we further clarify this connection and show that CPAs generate exactly the same sequence of points with the so-called linearized ADM (abbreviated as LADM) applied
to either the primal problem or its Lagrangian dual, depending on different updating orders of the primal and the dual variables in CPAs, as long as the initial points for the LADM
are properly chosen. The dependence on initial points for LADM can be relaxed by focusing on cyclically equivalent forms of the algorithms.
Furthermore, by utilizing the fact that CPAs are applications of a general weighted proximal point method to the mixed variational inequality formulation of the KKT system, where the weighting matrix is  positive definite under a parameter condition, we are able to propose and analyze inertial variants of CPAs. Under certain conditions,  global point-convergence, nonasymptotic $O(1/k)$  and asymptotic $o(1/k)$ convergence rate of the proposed inertial CPAs can be guaranteed, where $k$ denotes the iteration index.
Finally, we demonstrate the profits gained by introducing the inertial extrapolation step via experimental results on compressive image reconstruction based on total variation minimization.
\end{abstract}

\begin{keywords}
structured convex optimization, primal-dual algorithm, inertial primal-dual algorithm, linearized alternating direction method of multipliers, proximal point method,  total variation, image reconstruction.
\end{keywords}

\begin{AMS}
65K05, 65K10, 65J22, 90C25
\end{AMS}
\thispagestyle{plain}

\section{Introduction}
In this paper, we consider structured convex optimization problem of the form
\begin{equation}\tag{P$_1$}
\min_x \left\{ f(x) + g(Ax): \; x\in\Re^n\right\},
\end{equation}
where  $f: \Re^n\rightarrow(-\infty,\infty]$ and  $g: \Re^{m}\rightarrow(-\infty,\infty]$ are given extended real-valued closed proper convex functions (not necessarily differentiable), and $A$ is a given real matrix of size $m$-by-$n$.
Here the functions $f$ and $g$ are assumed to be extended real-valued and thus hard constraints on the variables can be hidden in the objective function.
Problems of this kind arise from numerous areas and applications, for example, signal and image reconstruction, compressive sensing and machine learning, to name a few, see, e.g., \cite{CP11,Boyd+11,ST14,YZ11,YY13,EZC10,Eck89,BT89} and references therein.
In such applications, the objective function usually consists of a data fitting term and a regularization term, which commonly have different structures and analytic properties, such as separability and Lipschitz continuity, and so on.
Very often, the functions $f$ and $g$ are relatively simple in the sense that their respective characteristics can be explored to design practical and efficient algorithms for solving  (P$_1$) with fairly large scale data.
In this paper, the structures that we assume for $f$ and $g$ are related to the proximity operator of closed proper convex functions, which is defined below.
\begin{definition}[Proximity operator]
   %\label{def:prox-def}
  Let $h: \Re^n \rightarrow (-\infty,+\infty]$ be an extended real-valued closed proper convex function, and $\gamma >0$.  The proximity operator of $h$ is defined as
  \begin{equation}\label{def:prox-eq}
    \text{prox}_{\gamma}^h(x) := \arg\min\nolimits_{z} \left\{ h(z) + \frac{1}{2\gamma}\|z - x\|^2: \;   z\in\Re^n\right\}, \; x \in \Re^n.
  \end{equation}
\end{definition}
Throughout this paper, we  make the following assumption on $f$ and $g$.
\begin{assumption}
   \label{as:f-g-simple}
 Assume that the  proximity operators of $f$ and $g$ can be evaluated efficiently.
\end{assumption}

We now present other formulations of (P$_1$).
To sufficiently explore the problem  structures, it is not uncommon to introduce an auxiliary variable and decouple the composition of $g$ and $A$.
As such,   (P$_1$) is usually reformulated as a linear equality constrained  problem with  separable  objective function, that is
\begin{equation}\tag{P$_2$}
\min_{x,u}  \left\{  f(x) + g(u): \ \hbox{s.t. } u - Ax  = 0, \, u\in\Re^m, \, x\in\Re^n\right\}.
\end{equation}
This formulation allows one to take advantage of the structures of $f$ and $g$ (and possibly $A$) individually and adequately.
To present the dual problem of (P$_1$) or (P$_2$), we need the notion of conjugate of a closed proper convex function $h: \Re^n\rightarrow (-\infty,\infty]$, which is defined as
%(see, e.g., \cite{Rock70})
\begin{eqnarray*}
  \label{def:conj-eq}
  h^*(x) := \sup\nolimits_{z} \left\{ \langle x, z\rangle - h(z): \;  z\in\Re^n\right\}, \; x\in\Re^n.
\end{eqnarray*}
The well-known Moreau's decomposition (see, e.g., \cite{Rock70}) links the proximity operator of a  closed proper convex function $h$ and that of its conjugate $h^*$ as follows
\begin{equation}
\label{lem:Moreau-eq}
  z = \text{prox}_{t}^h(z) + t \, \text{prox}_{t^{-1}}^{h^*}\left(t^{-1}z\right), \; \forall z\in\Re^n, \; \forall t > 0.
\end{equation}
It follows from \eqref{lem:Moreau-eq} and Assumption \ref{as:f-g-simple} that  the proximity operators of $f^*$ and $g^*$ are also easy to evaluate.
The Lagrangian dual problem of (P$_1$) or (P$_2$) can be represented as
\begin{equation}
\tag{D$_1$}
  \max_y \{  - g^*(y) -f^*(-A^\top y): \, y\in\Re^m\},
\end{equation}
where $y$ is the dual variable.
Similarly, we can rewrite (D$_1$), by introducing a new variable $v$,  as
\begin{equation}
\tag{D$_2$}
  \max_{y,v} \{ - g^*(y) -f^*(v): \, \hbox{s.t. } v + A^\top y=0, \; v\in \Re^n, \, y\in\Re^m\}.
\end{equation}
The primal-dual formulation of (P$_1$) or (D$_1$) is given by
\begin{equation}\tag{PD}
\min_x \max_y \left\{  f(x) + \langle Ax, y\rangle - g^*(y): \; x\in\Re^n, y\in\Re^m\right\}.
\end{equation}

It is apparent that any algorithm that solves (P$_1$) (resp., (D$_1$)) also solves (P$_2$) (resp., (D$_2$)), and vice versa.
If an algorithm solves (PD), then (P$_1$) and (D$_1$) are solved simultaneously. All the algorithms discussed in this paper solve (PD), and thus (P$_1$) and (D$_1$).
We emphasize that, throughout this paper, $x$ and $y$ denote, respectively, the essential primal and dual variables, while $u$ and $v$ are auxiliary variables for the primal and the dual problems, respectively.
In the following, we first review briefly  some  popular methods that are closely related to this work  and then summarize our motivation and contributions.

\subsection{Augmented Lagrangian based methods}
The augmented Lagrangian method (abbreviated as ALM, also known as the method of multipliers), pioneered by Hestense \cite{Hes69} and Powell \cite{Pow69}, is among the most influential approaches for solving constrained optimization problems, especially when the constraints contain linear equalities.
We take the dual problem (D$_2$) as an example and explain the main idea of the ALM.
The augmented Lagrangian function associated with (D$_2$) is given by
\begin{eqnarray}\label{def:AL-D}
\nonumber
  \calL_D^\tau(y,v,x) &:=& g^*(y) + f^*(v) - \langle x,  v + A^\top y \rangle + \frac{\tau}{2}\| v + A^\top y\|^2 \\
                       &=& g^*(y) + f^*(v)  + \calQ_D^\tau(y,v,x),
\end{eqnarray}
where $x\in\Re^n$ is the Lagrangian multiplier (and also the primal variable in (P$_1$) or (P$_2$)), $\tau>0$ is a penalty parameter, and $\calQ_D^\tau(y,v,x)$ is defined as
\begin{equation}
  \label{def:Q-D}
\calQ_D^\tau(y,v,x) := \frac{\tau}{2} \| v + A^\top y - \tau^{-1} x \|^2 - \frac{1}{2\tau}\|x\|^2.
\end{equation}
Given $x^k\in\Re^n$, the ALM iterates as
\begin{subequations}
  \label{ALM}
  \begin{eqnarray}
    \label{ALM-yv}
    (y^{k+1},v^{k+1}) &=& \arg\min_{y,v} \calL_D^\tau(y,v,x^k), \\
    \label{ALM-x}
     x^{k+1} &=& x^k - \tau (v^{k+1} + A^\top y^{k+1}).
  \end{eqnarray}
\end{subequations}
The most important feature of the ALM is that it solves a constrained optimization problem via solving a sequence of unconstrained ones.
Note that in our setting the functions $f$ and $g$ have structures. It is thus rather unwise to ignore the separability of the objective function and apply a joint minimization with respect to $(y,v)$ as in \eqref{ALM-yv}, because in this case it can be very difficult to fully explore the structures of $f$ and $g$ due to the mixing of variables.

In contrast, the alternating direction method of multipliers (abbreviated as ADM), pioneered by Glowinski and Marrocco \cite{GM75} and Gabay and Mercier \cite{GM76}, is a practical variant of the ALM. It applies alternating minimization with respect to $y$ and $v$ in \eqref{ALM-yv} in a Gauss-Seidel fashion, with the other variable fixed. After each sweep of alternating minimization, the multiplier $x$ is updated just as in the ALM. Specifically, given $v^k,x^k\in\Re^n$, the ADM iterates as%in ``$y\rightarrow v\rightarrow x$" order reads
\begin{subequations}\label{ADM-yvx}
\begin{eqnarray}
\label{ADM-yvx-y}
y^{k+1} &=& \arg\min_{y} \calL_D^\tau(y,v^k,x^k), \\
\label{ADM-yvx-v}
v^{k+1} &=& \arg\min_{v} \calL_D^\tau(y^{k+1},v,x^k), \\
\label{ADM-yvx-x}
     x^{k+1} &=& x^k - \tau (v^{k+1} + A^\top y^{k+1}).
\end{eqnarray}
\end{subequations}
Compared to the ALM, an obvious advantage of the ADM is that it solves simpler subproblems in each round. Since the minimizations for $y$ and $v$ are now separated, the structures of $f$ and $g$ can be utilized individually.
Interested readers are referred to the recent tutorial paper \cite{Eck11} for more details on the ALM and the ADM, including convergence analysis with nonexpansive mappings.

To make ADM efficient, it is very important to have low per-iteration cost and fast iterations.
Note that, by the definition of $\calL_D^\tau$ in \eqref{def:AL-D}, the $v$-subproblem \eqref{ADM-yvx-v} is already a proximal minimization step and is thus simple enough under Assumption \ref{as:f-g-simple}.
In comparison, it is very likely that the $y$-subproblem \eqref{ADM-yvx-y} is not easy in the sense that it calls for an iterative solver, even though the proximity operator of $g$ is easily obtainable. %
It is easy to observe that the components of $y$ in the quadratic term $\calQ_D^\tau$ are mixed-all-together due to the presence of the linear operator $A$.
To avoid solving it iteratively, the $y$-subproblem \eqref{ADM-yvx-y} needs to be treated wisely.
The most widely used technique to modify \eqref{ADM-yvx-y} so that the resulting subproblem can be solved easily by utilizing the structure of $g$ is to linearize  with respect to $y$ the quadratic term  $\calQ_D^\tau(y,v^k,x^k)$ at $y=y^k$ and meanwhile adding a proximal term $\frac{1}{2\sigma}\|y-y^k\|^2$ for some $\sigma >0$.
This proximal-linearization technique has been used extensively in the context of structured convex optimization, see, e.g., \cite{Nes07,HYZ08,BT09,MGC11,YZ11,ST14}.
In this paper, we refer the algorithm resulting from ADM and this proximal-linearization technique applied to one of the ADM subproblems as linearized ADM or LADM for short.

\subsection{Proximal point method and its inertial variant}
Another approach closely related to this work is the classical proximal point method (abbreviated as PPM, \cite{Mor65,Mar70,Roc76a}).
 Let $T: \Re^n \rightrightarrows \Re^n$ be a set-valued maximal monotone operator from $\Re^n$ to its power set.  The PPM  is an approach to solving the maximal monotone operator inclusion problem, i.e.,  find $w^*\in\Re^n$ such that $0 \in T(w^*)$.
The PPM has proven to be an extremely powerful algorithmic tool and contains many well known algorithms as special cases, including the aforementioned ALM  and   ADM, see  \cite{Roc76b, EB92}. See also \cite{Roc76a, EB92, Gul92} for  inexact, relaxed and accelerated variants of the PPM.
The primary PPM for minimizing a differentiable function $\psi:\; \Re^n\rightarrow \Re$ can be interpreted as an implicit one-step discretization method for the ordinary differential equations (abbreviated as ODEs) $w' + \nabla \psi(w) = 0$, where $w: \Re\rightarrow \Re^n$ is differentiable,  $w'$ denotes its derivative, and $\nabla \psi$ is the gradient of $\psi$. Suppose that $\psi$ is closed proper convex and its minimum value  is attained, then every solution trajectory $\{w(t): \; t\geq 0\}$ of this differential system   converges to a minimizer of $\psi$ as $t\rightarrow\infty$. Similar conclusion can be drawn for the maximal monotone operator inclusion problem by considering the evolution differential inclusion problem $0 \in w'(t) + T(w(t))$ almost everywhere on $\Re_+$, provided that the operator $T$ satisfies certain conditions \cite{Bru75}.

The PPM is a single-step method, meaning that each new iterate depends only on the current point.
To accelerate speed of convergence, multi-step methods have been proposed in the literature, which can usually be viewed as certain discretizations of second-order ODEs of the form
\begin{equation}
  \label{HBF}
w'' + \gamma w' + \nabla \psi(w) = 0,
\end{equation}
where $\gamma>0$ is a friction parameter. Relevant studies in the context of optimization  can be traced back to \cite{Pol64}.
It was shown in  \cite{Alv00} that if $\psi$ is convex and its minimum value is attained then each solution trajectory $\{w(t): t\geq 0\}$ of \eqref{HBF} converges  to a minimizer of $\psi$.
In theory the convergence of the solution trajectories of \eqref{HBF} to a stationary point of $\psi$  can be faster than those of the first-order ODEs, while
in practice the second order   term $w''$ can be exploited to design faster algorithms \cite{APZ84,Ant94}.
Motivated by the properties of \eqref{HBF}, an implicit  discretization method was proposed in \cite{Alv00}. Specifically, given $w^{k-1}$ and $w^k$, the next point $w^{k+1}$ is determined via
\[
\frac{w^{k+1} - 2w^k + w^{k-1}}{h^2}  + \gamma\frac{w^{k+1} -  w^k}{h} +  \nabla \psi(w^{k+1}) = 0,
\]
which results to an iterative algorithm of the form
\begin{equation}
  \label{iPPA-min-f}
w^{k+1} = (I + \lambda \nabla \psi)^{-1} (w^k + \alpha (w^k-w^{k-1})),
\end{equation}
where $\lambda = h^2/(1+\gamma h)$, $\alpha = 1/(1+\gamma h)$ and $I$ is the identity operator. Note that  \eqref{iPPA-min-f} is no more than a proximal point step taken at the extrapolated point $w^k + \alpha (w^k-w^{k-1})$, rather than $w^k$ itself as in the classical PPM. The iterative scheme \eqref{iPPA-min-f} is a two-step method and is usually referred as inertial PPM, because \eqref{HBF} describes in the two dimensional case the motion of a heavy ball on the graph of $\psi$ under its own inertial, together with friction  and gravity forces. Convergence properties of \eqref{iPPA-min-f} were studied in \cite{Alv00} under some assumptions on the parameters $\alpha$ and $\lambda$.
Subsequently, this inertial technique was extended to solve the maximal monotone operator inclusion problem  in \cite{AA01,ME03,Alv04, MM07}.
Recently, there are increasing interests in studying inertial type algorithms, e.g., inertial forward-backward splitting methods \cite{OCBP14,OBP14,APR14,BC14d}, inertial Douglas-Rachford operator splitting method \cite{BCH14} and inertial ADM \cite{BC14a}.
Lately, we proposed in \cite{CMY14a} a general inertial PPM for mixed variational inequality, where the weighting matrix  is allowed to be positive semidefinite. Global point-convergence and certain convergence rate results are also given there.

\subsection{Motivation and contributions}
In this paper, we study  inertial versions of Chambolle-Pock's primal-dual algorithm \cite{CP11} and variants.
In the recent work \cite{CMY14a}, we proposed a general inertial PPM under the setting of mixed variational inequality (abbreviated as MVI). There, an inertial LADM was proposed, where the two ADM subproblems must be linearized simultaneously in order to guarantee the positive definiteness of a weighting matrix when the resulting algorithm is viewed as a general PPM.
Note that the main aim of applying the proximal-linearization technique is to solve all subproblems efficiently via utilizing the proximity operators.
It is apparent that the $v$-subproblem \eqref{ADM-yvx-v} is already a proximal minimization step, which can make full use of the proximity operator of $f$ under Assumption \ref{as:f-g-simple} and the Moreau's decomposition \eqref{lem:Moreau-eq}. Thus, an approximation of \eqref{ADM-yvx-v} by proximal-linearization is unnecessary in any sense.
It is thus desirable to consider inertial LADM with only one of the ADM subproblems linearized.

In this paper, we first further clarify, based on previous observations in \cite{CP11,ST14}, the connection between CPAs and LADM, where only one of the ADM subproblems is modified by proximal-linearization. In particular, we show that CPAs generate exactly the same sequence of points with LADM applied to either the primal problem or its Lagrangian dual, as long as the initial points for the LADM are properly chosen. By focusing on cyclically equivalent forms of the algorithms, we can relax the dependence on initial points for LADM. Then, by utilizing the fact that CPAs are applications of a general PPM, we are able to propose inertial CPAs, whose global point-convergence, nonasymptotic $O(1/k)$ and asymptotic $o(1/k)$ rates can be guaranteed.
Since CPAs are equivalent to LADMs, the proposed algorithms are also inertial LADMs.

\subsection{Organization}
The paper is organized as follows.
In Section \ref{sc:CPA=LADM}, we study the equivalence of  CPAs and different applications of LADM.
In Section \ref{sc:VI-LADM}, we explain CPAs  as  applications of a general PPM to the MVI formulation of the KKT system. This explanation allows us to study CPAs within the setting of PPM.
Inertial CPAs are also proposed in this section, and convergence results including global point-convergence, nonasymptotic $O(1/k)$ and asymptotic $o(1/k)$ convergence rates are given.
In Section \ref{sc:numerical},   we demonstrate the performance of inertial CPAs  via experimental results on compressive image reconstruction based on total variation minimization. Finally, we give some concluding remarks in Section \ref{sc:concluding}.

\subsection{Notation}
Our notation is rather standard, as used above in this section. The standard inner product and $\ell_2$ norm are denoted by $\langle\cdot,\cdot\rangle$ and $\|\cdot\|$, respectively.
The superscript ``$^\top$" denotes the matrix/vector transpose operator.
For any positive semidefinite matrix $M$ of size $n$-by-$n$ and vectors $u, v\in \Re^n$, we let $\langle u, v\rangle_M := u^\top Mv$ and $\|u\|_M := \sqrt{\langle u, u\rangle_M}$.
The spectral radius of a square matrix $M$ is denoted by  $\rho(M)$. The identity matrix of order $m$ is denoted by $I_m$.
With a little abuse of notation, the columnwise adhesion of two columns vectors $u$ and $v$, i.e., $(u^\top,v^\top)^\top$, is often denoted by $(u,v)$ whenever it does not incure any confusion. Other notation will be introduced as the paper progresses.

\section{Equivalence of CPAs and LADMs}\label{sc:CPA=LADM}
In this section, we first present two versions of CPAs, which can be viewed as applying the original CPA proposed in \cite{CP11} to the primal problem (P$_1$) and the dual problem (D$_1$), respectively. Each version of the CPAs can appear in two different forms, depending on which variable is updated first.
We then study the equivalence of CPAs  and different applications of LADM. Our results are based on the previous observations given in \cite{CP11,ST14}.

\subsection{Two versions of CPAs}\label{ssc:CPA}
Recall that the proximity operator of a closed proper convex function is defined in \eqref{def:prox-eq}.
The original CPA proposed by Chambolle \& Pock  in \cite{CP11} to solve (P$_1$) and its other formulations is summarized below in Algorithm \ref{alg:yxx}.
\begin{algorithm}[CP-$yx\bar x$]\label{alg:yxx}
  Given $\sigma, \tau > 0$, $x^0\in\Re^n$ and $y^0\in\Re^m$. Set $\bar x^0 = x^0$. For $k\geq 0$, iterate as
  \begin{subequations}\label{yxx}
    \begin{eqnarray}
    \label{yxx-y}
      y^{k+1} &=& \text{prox}_{\sigma}^{g^*}(y^k + \sigma A \bar x^k), \\
    \label{yxx-x}
      x^{k+1} &=& \text{prox}_{\tau}^f(x^k - \tau A^\top  y^{k+1}),  \\
    \label{yxx-xbar}
      \bar x^{k+1} &=& 2 x^{k+1} - x^k.
    \end{eqnarray}
  \end{subequations}
\end{algorithm}

Algorithm \ref{alg:yxx} will be referred as CP-$yx\bar x$ subsequently, because it updates the dual variable $y$ first, followed by the primal variable $x$, and finally an extrapolation step in $x$ to obtain $\bar x$.
We note that in the original work \cite{CP11} the extrapolation step \eqref{yxx-xbar} takes the form $\bar x^{k+1} =  x^{k+1} + \gamma(x^{k+1}- x^k)$, where $\gamma\in[0,1]$ is a parameter.
In this paper, we only focus on the case $\gamma=1$, which is exclusively used in practice.
It was shown in \cite{CP11} that, under certain assumptions, the sequence $\{(x^k,y^k)\}_{k=0}^\infty$ generated by \eqref{yxx} converges to a solution of (PD) for general closed proper convex functions $f$ and $g$. In particular, an ergodic sublinear convergence result was obtained. Accelerations of CP-$yx\bar x$ were also considered there for problems with strong convexity.

By moving \eqref{yxx-y} to after \eqref{yxx-xbar} and reindexing the  points, we obtain a  cyclically equivalent form of CP-$yx\bar x$, which is summarized below in Algorithm \ref{alg:xxy} and will be referred as CP-$x\bar xy$ subsequently. Note that, compared to CP-$yx\bar x$, $\bar x^0$ is no longer needed to launch the algorithm.
\begin{algorithm}[CP-$x\bar xy$]\label{alg:xxy}
Given $\sigma, \tau > 0$,  $x^0\in\Re^n$ and $y^0\in\Re^m$. For $k\geq 0$, iterate as
  \begin{subequations}\label{xxy}
\begin{eqnarray}
\label{xxy-x}
  x^{k+1} &=& \text{prox}_{\tau}^f(x^k - \tau A^\top  y^k),  \\
\label{xxy-xbar}
  \bar x^{k+1} &=& 2x^{k+1} - x^k, \\
\label{xxy-y}
  y^{k+1} &=& \text{prox}_{\sigma}^{g^*}(y^k + \sigma A \bar x^{k+1}).
\end{eqnarray}
\end{subequations}
\end{algorithm}

By comparing (D$_1$) with (P$_1$), it is easy to observe  that $f^*$, $g^*$, $-A^\top$ and $y$ in (D$_1$) play, respectively, the roles of
$g$, $f$, $A$ and $x$ in (P$_1$). Thus, by  exchanging of variables, functions and parameters in CP-$yx\bar x$ as follows
\begin{eqnarray*}
  g \longleftarrow f^*, \;  f \longleftarrow g^*, \; A \longleftarrow -A^\top,  \; x \longleftrightarrow y,  \; \bar x \longleftarrow \bar y,  \; \sigma \longleftrightarrow \tau,
\end{eqnarray*}
and using the fact that $h^{**} = h$ for any closed proper convex function $h$, see, e.g., \cite{Rock70}, we obtain another  CPA, which is stated below and will be referred as
CP-$xy\bar y$ later.

\begin{algorithm}[CP-$xy\bar y$]\label{alg:xyy}
  Given $\sigma, \tau > 0$,  $x^0\in\Re^n$ and $y^0\in\Re^m$. Set $\bar y^0 = y^0$. For $k\geq 0$, iterate as
  \begin{subequations}\label{xyy}
    \begin{eqnarray}
    \label{xyy-x}
    x^{k+1} &=& \text{prox}_{\tau}^f(x^k - \tau A^\top  \bar y^k),  \\
    \label{xyy-y}
    y^{k+1} &=& \text{prox}_{\sigma}^{g^*}(y^k + \sigma A  x^{k+1}), \\
    \label{xyy-ybar}
    \bar y^{k+1} &=& 2y^{k+1} - y^k.
    \end{eqnarray}
    \end{subequations}
\end{algorithm}
If CP-$yx\bar x$ is viewed as applying the original CPA in \cite{CP11} to the primal problem (P$_1$), then CP-$xy\bar y$ can be considered as applying the original CPA to the dual problem (D$_1$).
Similarly,  by moving  \eqref{xyy-x}  to after \eqref{xyy-ybar}   and reindexing the  points, we obtain a cyclically equivalent algorithm, which
will be referred as CP-$y\bar yx$ and is given below in Algorithm \ref{alg:yyx}.
It is alike that, compared to CP-$xy\bar y$, $\bar y^0$ is no longer needed to start the algorithm.

\begin{algorithm}[CP-$y\bar yx$]\label{alg:yyx}
Given $\sigma, \tau > 0$,  $y^0\in\Re^m$ and $x^0\in\Re^n$. For $k\geq 0$, iterate as
\begin{subequations}\label{yyx}
\begin{eqnarray}
\label{yyx-y}
y^{k+1} &=& \text{prox}_{\sigma}^{g^*}(y^k + \sigma A  x^k), \\
\label{yyx-ybar}
\bar y^{k+1} &=& 2y^{k+1} - y^k, \\
\label{yyx-x}
x^{k+1} &=& \text{prox}_{\tau}^f(x^k - \tau A^\top  \bar y^{k+1}).
\end{eqnarray}
\end{subequations}
\end{algorithm}

Compared with CP-$yx\bar x$ given in \eqref{yxx}, it is easy to see that CP-$xy\bar y$ given in \eqref{xyy}  just exchanged the updating order of the primal and the dual variables.
After updating both the primal and the dual variables, CP-$yx\bar x$ and CP-$xy\bar y$ apply an extrapolation step to the latest updated variable.
In comparison, CP-$x\bar xy$ and CP-$y\bar yx$ given  respectively  in \eqref{xxy} and \eqref{yyx} apply an extrapolation step immediately after one of the variables
is updated, followed by updating the other variable.  In any case, the extrapolation step must be applied to the latest updated variable.

We emphasize that, although CP-$yx\bar x$  and CP-$x\bar xy$ are  cyclically equivalent, it is more convenient to analyze CP-$x\bar xy$ in the setting of PPM. This is because
the iteration of CP-$x\bar xy$ is from $(x^k,y^k)$ to $(x^{k+1},y^{k+1})$. By using the notion of proximity operator, we can express $x^{k+1}$ and $y^{k+1}$ explicitly in terms of $x^k$ and $y^k$, which is convenient for analysis.  In comparison, CP-$yx\bar x$ does not have this feature. Similar remarks are applicable to CP-$xy\bar y$ and CP-$y\bar yx$. Therefore, we only concentrate on CP-$x\bar xy$ and CP-$y\bar yx$ for convergence analysis.
In the following, we prove that CP-$yx\bar x$/CP-$x\bar xy$ and CP-$xy\bar y$/CP-$y\bar yx$ are equivalent to applying LADM to the dual problem (D$_2$) and the primal problem (P$_2$), respectively.

\subsection{Equivalence of CP-$yx\bar x$/CP-$x\bar xy$ and LADM for the dual problem (D$_2$)}
Let $\calL_D^\tau(y,v, x)$ and $\calQ_D^\tau(y,v,x)$ be defined in \eqref{def:AL-D} and \eqref{def:Q-D}, respectively.
To solve (D$_2$) by the ADM, the following subproblem needs to be solved repeatedly:
\begin{eqnarray*}\label{ADM-D-y}
\min_y \left\{ \calL_D^\tau(y,v^k, x^k) =  f^*(v^k) + g^*(y)  + \calQ_D^\tau(y,v^k,x^k): \; y\in\Re^m \right\}.
\end{eqnarray*}
To avoid solving it iteratively and construct an algorithm with cheap per-iteration cost, LADM linearizes the quadratic term $ \calQ_D^\tau(y,v^k,x^k)$
at $y=y^k$ and meanwhile adds a proximal term $\frac{1}{2\sigma}\|y-y^k\|^2$ for some $\sigma>0$. As such, $y^{k+1}$ is obtained as the solution of the following approximation problem (constant terms are omitted)
\begin{eqnarray*}\label{ADM-D-y-approx}
\nonumber
\min_y  g^*(y) + \big\langle \nabla_y \calQ_D^\tau(y^k,v^k, x^k), y-y^k\big\rangle + \frac{1}{2\sigma}\|y-y^k\|^2,
%\\      &=& \arg\min_y  g^*(y)  + \frac{1}{2\sigma}\|y- \big(y^k - \sigma \nabla_y \calQ_D^\tau(y^k,v^k, x^k)\big)\|^2,
\end{eqnarray*}
where $\sigma >0$ is a proximal parameter. By using the proximity operator defined in \eqref{def:prox-eq}, we can summarize  the resulting LADM below in Algorithm \ref{alg:LADM-D}, which will be referred as LADMD-$yvx$ for apparent reason.
\begin{algorithm}[LADMD-$yvx$]\label{alg:LADM-D}
   Given $\tau,\sigma>0$, $x^0\in\Re^n$, $y^0\in\Re^m$ and $v^0\in\Re^n$. The LADM applied to the dual problem (D$_2$) iterates, for $k\geq 0$, as
  \begin{subequations}
    \label{LADM-D}
    \begin{eqnarray}
    \label{LADM-D-y}
      y^{k+1} &=& \text{prox}_{\sigma}^{g^*} \big(y^k - \sigma \nabla_y \calQ_D^\tau(y^k,v^k, x^k)\big),\\
    \label{LADM-D-v}
      v^{k+1} &=& \text{prox}_{\tau^{-1}}^{f^*} (\tau^{-1} x^k  - A^\top y^{k+1}),\\
    \label{LADM-D-x}
      x^{k+1} &=& x^k - \tau  (v^{k+1} + A^\top y^{k+1}).
    \end{eqnarray}
  \end{subequations}
\end{algorithm}

The next theorem establishes the equivalence of CP-$yx\bar x$ and LADMD-$yvx$.  In \cite{CP11}, CP-$yx\bar x$ was explained as a preconditioned  ADM.

\begin{theorem}[Equivalence of CP-$yx\bar x$  and LADMD-$yvx$]\label{thm:yxx=LADM-D}
Let $\tau,\sigma>0$, $x^0\in\Re^n$, $y^0\in\Re^m$ and $v^0\in\Re^n$ be given. Suppose that $ v^0 + A^\top y^0 = 0$.
Then,  CP-$yx\bar x$ and LADMD-$yvx$ given in \eqref{yxx} and \eqref{LADM-D}, respectively, are  equivalent  in the sense that both algorithms
generate exactly the same sequence $\{(x^k,y^k)\}_{k=1}^\infty$.
\end{theorem}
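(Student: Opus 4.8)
The plan is to run an induction on $k$ showing that both algorithms produce identical $x^k$ and $y^k$, the whole argument hinging on a single invariant that identifies the CPA extrapolated point $\bar x^k$ with the LADM quantity $x^k - \tau(v^k + A^\top y^k)$. Concretely, I would prove by induction that, at the start of iteration $k$, the LADM iterates satisfy $\bar x^k = x^k - \tau(v^k + A^\top y^k)$ and that $(x^k,y^k)$ coincide in the two schemes. The base case is exactly where the hypothesis $v^0 + A^\top y^0 = 0$ enters: CP-$yx\bar x$ sets $\bar x^0 = x^0$, while the invariant demands $x^0 - \tau(v^0 + A^\top y^0)$, and these agree precisely because $v^0 + A^\top y^0 = 0$.

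For the inductive step I would first dispatch the $y$-update. Differentiating \eqref{def:Q-D} gives $\nabla_y \calQ_D^\tau(y,v,x) = \tau A(v + A^\top y - \tau^{-1}x) = A\big(\tau v + \tau A^\top y - x\big)$, so that \eqref{LADM-D-y} rewrites as
\[
y^{k+1} = \text{prox}_{\sigma}^{g^*}\Big(y^k + \sigma A\big[x^k - \tau(v^k + A^\top y^k)\big]\Big).
\]
Under the induction hypothesis $\bar x^k = x^k - \tau(v^k + A^\top y^k)$, the bracketed term is $\bar x^k$, and this is verbatim the CP-$yx\bar x$ update \eqref{yxx-y}; hence the two algorithms compute the same $y^{k+1}$.

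The crux, and the step I expect to require the most care, is showing that the LADM pair ``$v$-proximal step \eqref{LADM-D-v} followed by multiplier update \eqref{LADM-D-x}'' reproduces the single CPA $f$-proximal step \eqref{yxx-x}. Here I would invoke Moreau's decomposition \eqref{lem:Moreau-eq} with $h = f$, $t = \tau$, evaluated at $z := x^k - \tau A^\top y^{k+1}$. Since $\tau^{-1}z = \tau^{-1}x^k - A^\top y^{k+1}$, the conjugate proximal term in \eqref{lem:Moreau-eq} is exactly $v^{k+1}$ from \eqref{LADM-D-v}, and the decomposition yields $\text{prox}_{\tau}^f(z) = z - \tau v^{k+1} = x^k - \tau(v^{k+1} + A^\top y^{k+1})$. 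The left side is the CPA $x^{k+1}$ from \eqref{yxx-x}, and the right side is the LADM $x^{k+1}$ from \eqref{LADM-D-x}, so the two $x$-updates coincide. Finally I would close the induction by checking that the invariant propagates: from \eqref{LADM-D-x} one has $\tau(v^{k+1} + A^\top y^{k+1}) = x^k - x^{k+1}$, whence $x^{k+1} - \tau(v^{k+1} + A^\top y^{k+1}) = 2x^{k+1} - x^k = \bar x^{k+1}$ by \eqref{yxx-xbar}, reestablishing $\bar x^{k+1} = x^{k+1} - \tau(v^{k+1} + A^\top y^{k+1})$ and completing the step. The only genuine subtlety is keeping the Moreau identity aligned with the specific arguments appearing in \eqref{LADM-D-v}; once that is set up, everything reduces to algebraic substitution.
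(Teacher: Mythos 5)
Your proof is correct and follows essentially the same route as the paper: both arguments hinge on the invariant $\bar x^k = x^k - \tau(v^k + A^\top y^k)$ (with the hypothesis $v^0 + A^\top y^0 = 0$ supplying the base case) and on Moreau's decomposition \eqref{lem:Moreau-eq} to collapse the LADM $v$-step \eqref{LADM-D-v} and multiplier step \eqref{LADM-D-x} into the single proximal update \eqref{yxx-x}. The paper simply organizes the same computation as a direct verification rather than an explicitly labeled induction, so the difference is purely presentational.
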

\begin{proof}
We will show that the sequence $\{(x^k,y^k)\}_{k=1}^\infty$ generated by \eqref{LADM-D} satisfies \eqref{yxx}.
Let $k\geq 0$. From the Moreau's decomposition \eqref{lem:Moreau-eq}, $v^{k+1}$ given in \eqref{LADM-D-v} can be rewritten as
  \begin{equation}\label{LADM-D-v-2}
  v^{k+1} %= \text{prox}_{\tau^{-1}}^{f^*} (\tau^{-1} x^k  - A^\top y^{k+1})
  = \tau^{-1} x^k   - A^\top y^{k+1} - \tau^{-1} \text{prox}_{\tau}^{f} \big(x^k   - \tau A^\top y^{k+1}\big).
\end{equation}
It is easy to see from \eqref{LADM-D-v-2}  that $x^{k+1}$ given in \eqref{LADM-D-x} satisfies
  \begin{equation*}
  \label{LADM-D-x-2}
   x^{k+1} = x^k - \tau (v^{k+1} + A^\top y^{k+1}) = \text{prox}_{\tau }^{f} \big(x^k   - \tau A^\top y^{k+1}\big),
\end{equation*}
which is exactly \eqref{yxx-x}. From \eqref{LADM-D-x} and the assumption that $ v^0 + A^\top y^0 = 0$, we obtain
\begin{equation}
  \label{eq:xn-tau(v+Aty)}
x^k - \tau ( v^k + A^\top y^k) =
\left\{
  \begin{array}{ll}
    x^0, & \hbox{if $k=0$,} \\
    2x^k-x^{k-1}, & \hbox{if $k\geq 1$.}
  \end{array}
\right.
\end{equation}
Note that $\bar x^0 = x^0$ in CP-$yx\bar x$. It is thus clear from \eqref{eq:xn-tau(v+Aty)} and \eqref{yxx-xbar}
that $x^k - \tau ( v^k + A^\top y^k) = \bar x^k$ for all $k\geq 0$.
By direct calculation, we have
\[
y^k - \sigma \nabla_y \calQ_D^\tau(y^k,v^k, x^k) = y^k + \sigma  A( x^k - \tau  (v^k + A^\top y^k) ) = y^k + \sigma  A\bar x^k.
\]
Therefore, $y^{k+1}$ given in \eqref{LADM-D-y} reduces to $y^{k+1} = \text{prox}_{\sigma}^{g^*} \big( y^k + \sigma  A\bar x^k \big)$,
which is \eqref{yxx-y}.
\end{proof}

By moving \eqref{LADM-D-y} to after \eqref{LADM-D-x} and reindexing the  points, we obtain a  cyclically equivalent form of LADMD-$yvx$,
which is  given below and will be referred as LADMD-$vxy$.

\begin{algorithm}[LADMD-$vxy$]\label{alg:LADM-D-vxy}
 Given $\tau,\sigma>0$, $x^0\in\Re^n$ and $y^0\in\Re^m$. The LADM applied to the dual problem (D$_2$) iterates, for $k\geq 0$, as
  \begin{subequations}\label{LADM-D-vxy}
\begin{eqnarray}
\label{LADM-D-vxy-v}
  v^{k+1} &=& \text{prox}_{\tau^{-1}}^{f^*} (\tau^{-1} x^k  - A^\top y^k),\\
\label{LADM-D-vxy-x}
  x^{k+1} &=& x^k - \tau (v^{k+1} + A^\top y^k), \\
\label{LADM-D-vxy-y}
  y^{k+1} &=& \text{prox}_{\sigma}^{g^*} \big(y^k - \sigma \nabla_y \calQ_D^\tau(y^k,v^{k+1}, x^{k+1})\big).
\end{eqnarray}
\end{subequations}
\end{algorithm}

Note that, since LADMD-$vxy$ updates $v$ first, it can be launched with $(x^0,y^0)$ but without initialization of $v$.
Similarly, $\bar x^0$ is not needed to start   CP-$x\bar xy$.
The equivalence of CP-$x\bar xy$ and LADMD-$vxy$ is stated in Theorem \ref{thm:xxy=LADM-D-vxy}, whose proof is analogous to that of Theorem \ref{thm:yxx=LADM-D} and is thus omitted.
In contrast to the equivalence of CP-$yx\bar x$ and LADMD-$yvx$, that of CP-$x\bar xy$ and LADMD-$vxy$ does not require the condition $v^0 + A^\top y^0 = 0$ anymore.

\begin{theorem}[Equivalence of CP-$x\bar xy$ and LADMD-$vxy$]\label{thm:xxy=LADM-D-vxy}
Let $\tau,\sigma>0$, $x^0\in\Re^n$ and $y^0\in\Re^m$ be given.
Then, CP-$x\bar xy$ and LADMD-$vxy$ given, respectively, in \eqref{xxy} and \eqref{LADM-D-vxy} are  equivalent  in the sense that both algorithms
generate exactly the same sequence $\{(x^k,y^k)\}_{k=1}^\infty$.
\end{theorem}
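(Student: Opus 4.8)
The plan is to mirror the proof of Theorem~\ref{thm:yxx=LADM-D}: I will show that the sequence $\{(x^k,y^k)\}_{k=1}^\infty$ produced by LADMD-$vxy$ in \eqref{LADM-D-vxy} satisfies the defining recursion \eqref{xxy} of CP-$x\bar xy$. Since both algorithms are deterministic recursions launched from the same data $(x^0,y^0)$ and \eqref{xxy} determines $(x^{k+1},y^{k+1})$ uniquely from $(x^k,y^k)$, this forces the two iterate sequences to coincide. Because LADMD-$vxy$ recomputes $v^{k+1}$ at the start of each sweep from $(x^k,y^k)$ alone, the argument can be carried out locally, one value of $k\geq 0$ at a time, and no relation between $v^0$ and $y^0$ is ever invoked.

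First I would handle the $x$-update. Applying Moreau's decomposition \eqref{lem:Moreau-eq} with $t=\tau$ at the point $\tau^{-1}x^k-A^\top y^k$ rewrites $v^{k+1}$ in \eqref{LADM-D-vxy-v} as $v^{k+1}=\tau^{-1}\big(x^k-\tau A^\top y^k-\text{prox}_\tau^f(x^k-\tau A^\top y^k)\big)$, exactly the manipulation \eqref{LADM-D-v-2} used before. Substituting this into the multiplier step \eqref{LADM-D-vxy-x} causes the terms $x^k$ and $\tau A^\top y^k$ to cancel, leaving $x^{k+1}=\text{prox}_\tau^f(x^k-\tau A^\top y^k)$, which is precisely \eqref{xxy-x}.

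Next I would treat the $y$-update. Differentiating $\calQ_D^\tau$ from \eqref{def:Q-D} gives $\nabla_y\calQ_D^\tau(y^k,v^{k+1},x^{k+1})=\tau A\big(v^{k+1}+A^\top y^k-\tau^{-1}x^{k+1}\big)$, with the $A^\top y$ term evaluated at $y^k$. The key identity comes from rearranging the $x$-update \eqref{LADM-D-vxy-x} into $v^{k+1}+A^\top y^k=\tau^{-1}(x^k-x^{k+1})$, which holds for every $k\geq 0$ with no initialization hypothesis. Feeding this in collapses the gradient to $A(x^k-2x^{k+1})$, so that $y^k-\sigma\nabla_y\calQ_D^\tau=y^k+\sigma A(2x^{k+1}-x^k)=y^k+\sigma A\bar x^{k+1}$ by the definition \eqref{xxy-xbar} of $\bar x^{k+1}$. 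Hence \eqref{LADM-D-vxy-y} reduces to $y^{k+1}=\text{prox}_\sigma^{g^*}(y^k+\sigma A\bar x^{k+1})$, which is \eqref{xxy-y}.

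The computation is routine; the only conceptual point worth stressing---and the reason the statement omits the constraint $v^0+A^\top y^0=0$ present in Theorem~\ref{thm:yxx=LADM-D}---is that in the cyclic ordering $v$ is regenerated inside each iteration rather than carried across iterations. Consequently the relation $v^{k+1}+A^\top y^k=\tau^{-1}(x^k-x^{k+1})$ that produces the extrapolated argument $\bar x^{k+1}$ holds unconditionally, whereas in LADMD-$yvx$ the analogous relation \eqref{eq:xn-tau(v+Aty)} required the initialization to align $v^0$ with $y^0$. I expect no genuine obstacle; the only care needed is in correctly evaluating $\nabla_y\calQ_D^\tau$ at $(y^k,v^{k+1},x^{k+1})$ and confirming that the factor $\tau$ in the gradient cancels the $\tau^{-1}$ arising from the multiplier update, so that precisely the extrapolation step of CP-$x\bar xy$ is reproduced.
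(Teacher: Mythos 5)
Your proposal is correct and is precisely the argument the paper has in mind: the paper omits this proof, stating it is analogous to that of Theorem~\ref{thm:yxx=LADM-D}, and your computation (Moreau's decomposition to turn \eqref{LADM-D-vxy-v}--\eqref{LADM-D-vxy-x} into the prox step \eqref{xxy-x}, then the identity $v^{k+1}+A^\top y^k=\tau^{-1}(x^k-x^{k+1})$ to collapse the gradient of $\calQ_D^\tau$ into the extrapolated point $\bar x^{k+1}$) is exactly that analogous proof. Your observation that the initialization condition disappears because $v$ is regenerated within each sweep rather than carried across iterations also matches the remark the paper makes just before the theorem.
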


\subsection{Equivalence of CP-$xy\bar y$/CP-$y\bar yx$ and LADM for the primal problem (P$_2$)}
Now we apply LADM to the primal problem (P$_2$).
The augmented Lagrangian function associated with (P$_2$) is given by
\begin{eqnarray*}\label{def:AL-P}
\nonumber
\calL_P^\sigma(x,u,y) &:=& f(x) + g(u) - \langle y, u-Ax\rangle + \frac{\sigma}{2} \|u-Ax\|^2 \\
                      &=& f(x) + g(u) + \calQ_P^\sigma(x,u,y),
\end{eqnarray*}
where $y\in\Re^m$ is the Lagrangian multiplier (and also the dual variable in (D$_1$) or (D$_2$)), $\sigma >0$ is a penalty parameter,  and $\calQ_P^\sigma(x,u,y)$ is defined as
\begin{equation*}
  \label{def:Q-P}
\calQ_P^\sigma(x,u,y) := \frac{\sigma}{2} \|u-Ax -\sigma^{-1}y\|^2 - \frac{1}{2\sigma}\|y\|^2.
\end{equation*}
Given $u^k, y^k\in\Re^m$, the ADM for solving (P$_2$) iterates as
\begin{subequations}\label{ADM-xuy}
\begin{eqnarray}
\label{ADM-xuy-x}
  x^{k+1} &=& \arg\min_{x\in \Re^n} \calL_P^\sigma(x,u^k,y^k), \\
\label{ADM-xuy-u}
  u^{k+1} &=& \arg\min_{u\in \Re^m} \calL_P^\sigma(x^{k+1},u,y^k), \\
\label{ADM-xuy-y}
  y^{k+1} &=& y^k - \sigma (u^{k+1} - Ax^{k+1}).
\end{eqnarray}
\end{subequations}
Similarly, due to the presence of linear operator $A$ in $\calQ_P^\sigma$, the solution of \eqref{ADM-xuy-x} calls for an inner loop in general.
To avoid solving it iteratively, we linearize at each iteration the quadratic term $ \calQ_P^\sigma(x,u^k,y^k)$
at $x=x^k$, add a proximal term and approximate it  by (again, constant terms are omitted)
\begin{eqnarray*}\label{ADM-xuy-x-approx}
\nonumber
%x^{k+1} &=& \arg
\min_x  f(x) + \big\langle \nabla_x \calQ_P^\sigma(x^k,u^k, y^k), x-x^k\big\rangle + \frac{1}{2\tau}\|x-x^k\|^2,
%\\        &=& \arg\min_x  f(x)  + \frac{1}{2\tau}\|x- \left(x^k - \tau \nabla_x \calQ_P^\sigma(x^k,u^k, y^k)\right)\|^2,
\end{eqnarray*}
where $\tau >0$ is a proximal parameter. The resulting LADM is given in Algorithm \ref{alg:LADM-P} and will be referred as LADMP-$xuy$.
\begin{algorithm}[LADMP-$xuy$]\label{alg:LADM-P}
  Given $\tau,\sigma>0$, $x^0\in\Re^n$, $u^0\in\Re^m$ and $y^0\in\Re^m$. The LADM applied to the primal problem (P$_2$) iterates, for $k\geq 0$, as
  \begin{subequations}\label{LADM-P}
    \begin{eqnarray}
    \label{LADM-P-x}
        x^{k+1} &=& \text{prox}_{\tau}^{f} \left(x^k - \tau \nabla_x \calQ_P^\sigma(x^k,u^k, y^k)\right),\\
    \label{LADM-P-u}
        u^{k+1} &=& \text{prox}_{\sigma^{-1}}^{g} (\sigma^{-1}y^k  + Ax^{k+1}),\\
    \label{LADM-P-y}
        y^{k+1} &=& y^k - \sigma (u^{k+1} - A  x^{k+1}).
    \end{eqnarray}
  \end{subequations}
\end{algorithm}

The equivalence of CP-$xy\bar y$ and LADMP-$xuy$ can be established completely in analogous as in Theorem \ref{thm:yxx=LADM-D}. See also
\cite{ST14}. Similarly, to guarantee that both schemes generate exactly the same sequence of points, a condition $u^0 = A x^0$ must be imposed on the initial points,
which was not stated in the literature.

\begin{theorem}[Equivalence of CP-$xy\bar y$ and LADMP-$xuy$]\label{thm:xyy=LADM-P}
Let $\tau,\sigma>0$, $x^0\in\Re^n$, $u^0\in\Re^m$ and $y^0\in\Re^m$ be given. Suppose that $u^0 = A x^0$.
Then,  CP-$xy\bar y$ and LADMP-$xuy$ given in \eqref{xyy} and  \eqref{LADM-P}, respectively, are  equivalent  in the sense that both algorithms
generate exactly the same sequence $\{(x^k,y^k)\}_{k=1}^\infty$.
\end{theorem}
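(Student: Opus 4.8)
The plan is to follow the template of the proof of Theorem \ref{thm:yxx=LADM-D} and show that the sequence $\{(x^k,y^k)\}_{k=1}^\infty$ produced by LADMP-$xuy$ in \eqref{LADM-P} satisfies the recursion \eqref{xyy} of CP-$xy\bar y$. Three identities must be verified at each step: that the multiplier update \eqref{LADM-P-y} reproduces the dual update \eqref{xyy-y}; that the argument $x^k - \tau\nabla_x\calQ_P^\sigma(x^k,u^k,y^k)$ of \eqref{LADM-P-x} equals $x^k - \tau A^\top\bar y^k$, which identifies \eqref{LADM-P-x} with \eqref{xyy-x}; and that the extrapolated dual variable $\bar y^k$ of CP-$xy\bar y$ coincides with the quantity $y^k - \sigma(u^k - Ax^k)$ generated implicitly by the multiplier recursion. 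The whole argument is the exact dual mirror of the proof of Theorem \ref{thm:yxx=LADM-D} under the substitution $g\leftarrow f^*$, $f\leftarrow g^*$, $A\leftarrow -A^\top$, $x\leftrightarrow y$, $\sigma\leftrightarrow\tau$ used to derive CP-$xy\bar y$ from CP-$yx\bar x$.

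First I would eliminate the auxiliary variable $u$. Applying Moreau's decomposition \eqref{lem:Moreau-eq} to \eqref{LADM-P-u} with $h=g$ and $t=\sigma^{-1}$, I would rewrite $u^{k+1}$ as $\sigma^{-1}y^k + Ax^{k+1} - \sigma^{-1}\,\text{prox}_{\sigma}^{g^*}(y^k + \sigma Ax^{k+1})$. Substituting this into \eqref{LADM-P-y} cancels the $\sigma^{-1}y^k$ and $Ax^{k+1}$ contributions and leaves $y^{k+1} = \text{prox}_{\sigma}^{g^*}(y^k + \sigma Ax^{k+1})$, which is exactly \eqref{xyy-y}. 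This step is routine and parallels the treatment of the $v$-subproblem in \eqref{LADM-D-v-2}.

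Next I would compute $\nabla_x\calQ_P^\sigma(x^k,u^k,y^k) = -\sigma A^\top\big(u^k - Ax^k - \sigma^{-1}y^k\big) = A^\top\big(y^k - \sigma(u^k - Ax^k)\big)$, so that the argument of \eqref{LADM-P-x} becomes $x^k - \tau A^\top\big(y^k - \sigma(u^k - Ax^k)\big)$. It then remains to identify $y^k - \sigma(u^k - Ax^k)$ with $\bar y^k$. Rewriting the multiplier update \eqref{LADM-P-y} as $y^k - y^{k-1} = -\sigma(u^k - Ax^k)$ for $k\geq 1$ gives $y^k - \sigma(u^k - Ax^k) = 2y^k - y^{k-1}$, which matches the extrapolation $\bar y^k = 2y^k - y^{k-1}$ implied by \eqref{xyy-ybar} for every $k\geq 1$.

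The main obstacle, exactly as in Theorem \ref{thm:yxx=LADM-D}, is the base case $k=0$, where the telescoping identity is unavailable. Here one needs $y^0 - \sigma(u^0 - Ax^0) = \bar y^0 = y^0$, and this is precisely where the hypothesis $u^0 = Ax^0$ is indispensable: it forces the residual $u^0 - Ax^0$ to vanish so that the implicit extrapolated variable agrees with the initialization $\bar y^0 = y^0$ of CP-$xy\bar y$. With the $k=0$ case secured, a straightforward induction propagates the identity $y^k - \sigma(u^k - Ax^k) = \bar y^k$ together with the matching of all three updates to every $k\geq 0$, completing the equivalence.
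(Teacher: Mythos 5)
Your proposal is correct and is exactly the argument the paper intends: the paper omits the proof of Theorem \ref{thm:xyy=LADM-P}, stating it is ``completely analogous'' to that of Theorem \ref{thm:yxx=LADM-D}, and your three steps (Moreau's decomposition applied to \eqref{LADM-P-u} to recover \eqref{xyy-y}, the gradient computation $\nabla_x\calQ_P^\sigma(x^k,u^k,y^k)=A^\top\bigl(y^k-\sigma(u^k-Ax^k)\bigr)$, and the telescoping identification $y^k-\sigma(u^k-Ax^k)=\bar y^k$ using $u^0=Ax^0$ for the base case) are precisely the primal mirror of that proof. No gaps.
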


Similarly,  by moving  \eqref{LADM-P-x} to after \eqref{LADM-P-y}  and reindexing the  points, we obtain a cyclically equivalent algorithm that does not need $u^0$ in initialization. The algorithm, which  will be referred as LADMP-$uyx$,  and its equivalence to CP-$y\bar yx$ are summarized in Algorithm \ref{alg:LADM-P-uyx} and Theorem \ref{thm:yyx=LADM-P-uyx}, respectively.
\begin{algorithm}[LADMP-$uyx$]\label{alg:LADM-P-uyx}
  Given $\tau,\sigma>0$, $x^0\in\Re^n$ and $y^0\in\Re^m$. The LADM applied to the primal problem (P$_2$) iterates, for $k\geq 0$, as
\begin{subequations}\label{LADM-P-uyx}
\begin{eqnarray}
\label{LADM-P-uyx-u}
  u^{k+1} &=& \text{prox}_{\sigma^{-1}}^{g} (\sigma^{-1} y^k   + Ax^k),\\
\label{LADM-P-uyx-y}
  y^{k+1} &=& y^k - \sigma (u^{k+1} - A  x^k),\\
\label{LADM-P-uyx-x}
  x^{k+1} &=& \text{prox}_{\tau}^{f} \left(x^k - \tau \nabla_x \calQ_P^\sigma(x^k,u^{k+1}, y^{k+1})\right).
\end{eqnarray}
\end{subequations}
\end{algorithm}

\begin{theorem}[Equivalence CP-$y\bar yx$ and LADMP-$uyx$]\label{thm:yyx=LADM-P-uyx}
Let $\tau,\sigma>0$, $x^0\in\Re^n$ and $y^0\in\Re^m$ be given.
Then, CP-$y\bar yx$ and LADMP-$uyx$ given in \eqref{yyx} and \eqref{LADM-P-uyx}, respectively, are  equivalent  in the sense that both algorithms
generate exactly the same sequence $\{(x^k,y^k)\}_{k=1}^\infty$.
\end{theorem}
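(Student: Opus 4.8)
The plan is to follow the template of the proof of Theorem \ref{thm:yxx=LADM-D}, adapted to the primal problem and to the cyclically reindexed $uyx$ ordering; indeed, Theorem \ref{thm:yyx=LADM-P-uyx} is the primal analogue of Theorem \ref{thm:xxy=LADM-D-vxy}, just as Theorem \ref{thm:xyy=LADM-P} is the primal analogue of Theorem \ref{thm:yxx=LADM-D}. Starting from the two given initial points $x^0$ and $y^0$, which both algorithms share, I would show by induction on $k$ that the sequence $\{(x^k,y^k)\}$ produced by LADMP-$uyx$ in \eqref{LADM-P-uyx} satisfies the recursion \eqref{yyx} defining CP-$y\bar yx$.

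First I would eliminate the auxiliary variable $u$. Applying Moreau's decomposition \eqref{lem:Moreau-eq} with $h=g$ and $t=\sigma^{-1}$ to the $u$-update \eqref{LADM-P-uyx-u} rewrites $u^{k+1}$ as $\sigma^{-1}y^k + A x^k - \sigma^{-1}\text{prox}_{\sigma}^{g^*}(y^k + \sigma A x^k)$. Substituting this expression into the multiplier update \eqref{LADM-P-uyx-y} and simplifying, the terms $\sigma^{-1}y^k + A x^k$ cancel against $y^k$ and the factor $\sigma$, leaving $y^{k+1} = \text{prox}_{\sigma}^{g^*}(y^k + \sigma A x^k)$, which is exactly \eqref{yyx-y}. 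Setting $\bar y^{k+1} := 2y^{k+1} - y^k$ then reproduces \eqref{yyx-ybar} for free.

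The remaining, and only non-mechanical, step is to show that the $x$-update \eqref{LADM-P-uyx-x} collapses to \eqref{yyx-x}. I would compute the gradient $\nabla_x \calQ_P^\sigma(x,u,y) = -\sigma A^\top(u - Ax - \sigma^{-1}y)$ and evaluate it at $(x^k,u^{k+1},y^{k+1})$. The key observation is that the multiplier update \eqref{LADM-P-uyx-y} can be read as $u^{k+1} - A x^k = \sigma^{-1}(y^k - y^{k+1})$, whence $u^{k+1} - A x^k - \sigma^{-1}y^{k+1} = \sigma^{-1}(y^k - 2y^{k+1})$; substituting this back gives $\nabla_x \calQ_P^\sigma(x^k,u^{k+1},y^{k+1}) = A^\top(2y^{k+1} - y^k) = A^\top \bar y^{k+1}$. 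Hence $x^{k+1} = \text{prox}_{\tau}^{f}(x^k - \tau A^\top \bar y^{k+1})$, which is precisely \eqref{yyx-x}, and the induction closes.

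I expect this cancellation inside the gradient term to be the crux of the argument: everything else reduces to a direct substitution through Moreau's identity. It is also worth noting, in the statement and discussion, that unlike Theorem \ref{thm:yxx=LADM-D} and Theorem \ref{thm:xyy=LADM-P} no compatibility condition on the initial data is needed here, because LADMP-$uyx$ updates $u$ before $y$ and $x$, so that $u^{k+1}$ is freshly recomputed from $(x^k,y^k)$ at every iteration and never has to be initialized consistently. This mirrors exactly the way the condition $v^0 + A^\top y^0 = 0$ is dispensed with in passing from Theorem \ref{thm:yxx=LADM-D} to Theorem \ref{thm:xxy=LADM-D-vxy}.
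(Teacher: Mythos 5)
Your proposal is correct and follows essentially the same route as the paper: the paper omits this proof as analogous to that of Theorem \ref{thm:yxx=LADM-D}, and your argument is exactly that adaptation --- eliminate $u^{k+1}$ via Moreau's decomposition \eqref{lem:Moreau-eq} to recover \eqref{yyx-y}, then use the multiplier update to simplify $\nabla_x \calQ_P^\sigma(x^k,u^{k+1},y^{k+1})$ to $A^\top(2y^{k+1}-y^k)=A^\top\bar y^{k+1}$, recovering \eqref{yyx-x}. Your closing remark on why no initial compatibility condition (the analogue of $v^0+A^\top y^0=0$ or $u^0=Ax^0$) is needed also matches the paper's discussion of the cyclically reindexed orderings.
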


\section{Inertial CPAs}\label{sc:VI-LADM}
In this section, we first show that  CPAs are equivalent to applying a general PPM to the MVI formulation
of the KKT system of (PD). We then propose inertial CPAs.
Again, since CPAs are equivalent to LADMs, the proposed inertial CPAs can also be called inertial LADMs.
In the following, we mainly focus on CP-$y\bar yx$ given in \eqref{yyx} and discussions for other CPAs are alike.

\subsection{CP-$y\bar yx$ and CP-$x\bar xy$ are applications of a general PPM}\label{ssc:CPA=PPM}
Under certain regularity assumptions, see, e.g., \cite{Eck89}, solving the primal-dual pair (P$_1$) and (D$_1$) is equivalent to finding
$(x^*,y^*)\in \Re^n\times\Re^m$ such that the following KKT conditions are satisfied:
\begin{subequations}\label{kkt-P2}
\begin{eqnarray}
   f(x) - f(x^*) + \langle x - x^*, A^\top y^*\rangle &\geq& 0, \; \forall x\in\Re^n,\\
   g^*(y) - g^*(y^*) + \langle y - y^*, -Ax^*\rangle &\geq& 0, \; \forall y\in\Re^m.
\end{eqnarray}
\end{subequations}
In the rest of this paper, we use the notation  $\calW := \Re^n\times\Re^m$,
\begin{equation}\label{def-w-F-theta}
w := \left(
      \begin{array}{c}
        x \\
        y \\
      \end{array}
    \right), \quad
\theta(w) :=  f(x) + g^*(y), \quad
F(w) :=
\left(
  \begin{array}{cc}
    0 & A^\top   \\
    -A &  0 \\
  \end{array}
\right)
\left(
      \begin{array}{c}
        x \\
        y \\
      \end{array}
    \right).
\end{equation}
Since the coefficient matrix defining $F$ is skew-symmetric, $F$ is thus monotone.
Using these notation, the KKT system \eqref{kkt-P2} can be equivalently represented as a MVI problem:  find $w^*\in\calW$ such that
\begin{equation}\label{mVI}
  \theta(w) - \theta(w^*) + \langle w-w^*,  F(w^*)\rangle \geq 0,\; \forall w\in \calW.
\end{equation}
We make the following assumption on the problem (PD).
\begin{assumption}
  \label{as:KKT-nonempty}
Assume that the set of  solutions of \eqref{mVI}, denoted by $\calW^*$, is nonempty.
\end{assumption}

Using analysis similar to that in \cite[Lemma 2.2]{HY12}, we can show that CP-$y\bar yx$ is a general PPM applied to the MVI formulation \eqref{mVI}. Though the proof is simple, we give it for completeness.

\begin{lemma}\label{lem:LADM-mVI-k+1}
  For given $w^k = (x^k,y^k) \in \calW$, the new iterate $w^{k+1} = (x^{k+1},y^{k+1})$ generated by CP-$y\bar yx$ given in \eqref{yyx} satisfies
  \begin{equation}\label{LADM-mVI-k+1}
  w^{k+1} \in \calW, \;     \theta(w) - \theta(w^{k+1}) + \langle w - w^{k+1}, F(w^{k+1}) + G (w^{k+1} -  w^k)\rangle \geq 0, \; \forall w\in \calW,
  \end{equation}
 where $G$ is given by
 \begin{equation}\label{def:G-yyx}
 G  = \left(
  \begin{array}{cc}
 \frac{1}{\tau} I_n & A^\top  \\
 A & \frac{1}{\sigma}I_m \\
  \end{array}
\right).
 \end{equation}
\end{lemma}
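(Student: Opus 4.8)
The plan is to derive the variational inequality \eqref{LADM-mVI-k+1} directly from the optimality conditions of the two proximal subproblems in CP-$y\bar yx$, namely \eqref{yyx-y} and \eqref{yyx-x}. First I would write down the first-order optimality condition for each \text{prox} step. Recall that $w = \text{prox}_\gamma^h(z)$ is equivalent to the inclusion $\gamma^{-1}(z - w) \in \partial h(w)$, which in variational-inequality form reads $h(p) - h(w) + \langle p - w, \gamma^{-1}(w - z)\rangle \geq 0$ for all $p$. Applying this to the $y$-update \eqref{yyx-y} with $h = g^*$, $\gamma = \sigma$, and $z = y^k + \sigma A x^k$ yields, for all $y \in \Re^m$,
\[
g^*(y) - g^*(y^{k+1}) + \big\langle y - y^{k+1},\, \tfrac{1}{\sigma}(y^{k+1} - y^k) - A x^k \big\rangle \geq 0,
\]
and applying it to the $x$-update \eqref{yyx-x} with $h = f$, $\gamma = \tau$, $z = x^k - \tau A^\top \bar y^{k+1}$ gives, for all $x \in \Re^n$,
\[
f(x) - f(x^{k+1}) + \big\langle x - x^{k+1},\, \tfrac{1}{\tau}(x^{k+1} - x^k) + A^\top \bar y^{k+1} \big\rangle \geq 0.
\]

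The second step is to eliminate the extrapolated variable $\bar y^{k+1}$ using its defining relation \eqref{yyx-ybar}, i.e. $\bar y^{k+1} = 2y^{k+1} - y^k = y^{k+1} + (y^{k+1} - y^k)$. Substituting this into the $x$-inequality replaces $A^\top \bar y^{k+1}$ by $A^\top y^{k+1} + A^\top(y^{k+1} - y^k)$, so the linear term in the $x$-inequality becomes $\tfrac{1}{\tau}(x^{k+1} - x^k) + A^\top y^{k+1} + A^\top(y^{k+1} - y^k)$. In the $y$-inequality I would similarly rewrite $-Ax^k = -Ax^{k+1} + A(x^{k+1} - x^k)$ so that the term $-Ax^{k+1}$ matching the definition of $F(w^{k+1})$ appears explicitly, leaving a residual $\tfrac{1}{\sigma}(y^{k+1} - y^k) + A(x^{k+1} - x^k)$.

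The third step is to add the two inequalities and recognize the combined expression as \eqref{LADM-mVI-k+1}. The sum of the function differences is exactly $\theta(w) - \theta(w^{k+1})$ by the definition of $\theta$ in \eqref{def-w-F-theta}. The terms $\langle x - x^{k+1}, A^\top y^{k+1}\rangle + \langle y - y^{k+1}, -A x^{k+1}\rangle$ assemble into $\langle w - w^{k+1}, F(w^{k+1})\rangle$, again using \eqref{def-w-F-theta} and the skew-symmetric coefficient matrix. The remaining terms, collected, are
\[
\big\langle x - x^{k+1},\, \tfrac{1}{\tau}(x^{k+1}-x^k) + A^\top(y^{k+1}-y^k)\big\rangle + \big\langle y - y^{k+1},\, \tfrac{1}{\sigma}(y^{k+1}-y^k) + A(x^{k+1}-x^k)\big\rangle,
\]
which is precisely $\langle w - w^{k+1},\, G(w^{k+1} - w^k)\rangle$ for the matrix $G$ in \eqref{def:G-yyx}, since the off-diagonal blocks $A^\top$ and $A$ couple the $x$- and $y$-increments exactly as displayed. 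Finally I note $w^{k+1} \in \calW = \Re^n \times \Re^m$ trivially.

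The main obstacle, though it is really just bookkeeping rather than a genuine difficulty, is the careful handling of the cross terms so that the extrapolation increment $(y^{k+1} - y^k)$ introduced through $\bar y^{k+1}$ and the ordinary difference terms from the proximal optimality conditions combine to produce the symmetric off-diagonal blocks of $G$ rather than spurious extra terms. The key arithmetic fact making this work is that the extrapolation coefficient in \eqref{yyx-ybar} is exactly $2$ (equivalently, $\gamma = 1$), which is what converts $A^\top \bar y^{k+1}$ into $A^\top y^{k+1}$ plus exactly one copy of $A^\top(y^{k+1}-y^k)$; any other coefficient would fail to yield the stated $G$. I would double-check signs on the $A$ versus $-A$ blocks against the definition of $F$ to confirm the monotone part is correctly separated from the positive-definite weighting $G$.
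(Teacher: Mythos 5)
Your proposal is correct and follows essentially the same route as the paper's own proof: write the variational-inequality form of the optimality conditions for the two proximal steps \eqref{yyx-y} and \eqref{yyx-x}, use $\bar y^{k+1}=2y^{k+1}-y^k$ and the rewriting $-Ax^k=-Ax^{k+1}+A(x^{k+1}-x^k)$ to separate the $F(w^{k+1})$ part from the increment terms, and add the two inequalities to identify $G(w^{k+1}-w^k)$. The bookkeeping you flag (the coefficient $2$ producing exactly one copy of $A^\top(y^{k+1}-y^k)$, and the sign split between the skew-symmetric $F$ and the symmetric $G$) is precisely what the paper's computation does.
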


\begin{proof}
Recall that the proximity operator is defined as the solution of an optimization problem in \eqref{def:prox-eq}.
The optimality conditions of \eqref{yyx-x} and \eqref{yyx-y} read
\begin{eqnarray*}
f(x) - f(x^{k+1}) + \left\langle  x-x^{k+1},  \, \frac{1}{\tau}(x^{k+1} - x^k) + A^\top  (2y^{k+1}-y^k) \right\rangle \geq 0, \; \forall x\in\Re^n, \\
g^*(y) - g^*(y^{k+1}) + \left\langle y-y^{k+1}, \,  \frac{1}{\sigma} ( y^{k+1} - y^k ) - A x^k \right\rangle \geq 0, \; \forall y\in\Re^m,
\end{eqnarray*}
which can be equivalently represented as
\begin{eqnarray*}
f(x) - f(x^{k+1}) + \left\langle  x-x^{k+1},  \, A^\top  y^{k+1} + \frac{1}{\tau}(x^{k+1} - x^k) + A^\top (y^{k+1}-y^k) \right\rangle \geq 0, \; \forall x\in\Re^n, \\
g^*(y) - g^*(y^{k+1}) + \left\langle y-y^{k+1}, \, - A x^{k+1} + A (x^{k+1}-   x^k) + \frac{1}{\sigma} ( y^{k+1} - y^k )  \right\rangle \geq 0, \; \forall y\in\Re^m.
\end{eqnarray*}
By the notation defined in \eqref{def-w-F-theta}, it is clear that the addition of the above two inequalities yields \eqref{LADM-mVI-k+1}, with $G$ defined in \eqref{def:G-yyx}.
\end{proof}

For CP-$x\bar xy$ given in  \eqref{xxy}, similar result holds. Specifically, the new iterate $w^{k+1} = (x^{k+1},y^{k+1})$ generated by CP-$x\bar xy$ from a given
$w^k = (x^k,y^k) \in \calW$ satisfies \eqref{LADM-mVI-k+1} with the weighting matrix $G$ given by
 \begin{equation}\label{def:G-xxy}
 G  = \left(
  \begin{array}{cc}
 \frac{1}{\tau} I_n & -A^\top  \\
 -A & \frac{1}{\sigma}I_m \\
  \end{array}
\right).
 \end{equation}

Throughout this paper, we make the following assumption on the parameters $\tau$ and $\sigma$.
\begin{assumption}\label{as:tau-sigma}
  The parameters $\tau$ and $\sigma$ satisfy the conditions $\tau, \sigma>0$ and $\tau\sigma < 1/\rho(A^\top A)$.
\end{assumption}

It is apparent that $G$ defined in \eqref{def:G-yyx} or \eqref{def:G-xxy} is symmetric and positive definite under Assumption \ref{as:tau-sigma}.
Thus, CP-$y\bar yx$ and CP-$x\bar xy$ can be viewed as a general PPM with a symmetric and positive definite weighting matrix $G$.
With this explanation, the convergence results of CPA can be established very conveniently under the PPM framework.
Here we  present the convergence results and omit the proof. Interested readers can refer to, e.g., \cite{CP11,HY12a,HY12b,ST14,CMY14a}, for similar convergence results and different analytic techniques.

\begin{theorem}[Convergence results of CP-$y\bar yx$ and CP-$x\bar xy$]\label{thm:convergence-LADM}
Assume that $\tau$ and $\sigma$ satisfy Assumption \ref{as:tau-sigma}.
Let $\{w^k = (x^k,y^k)\}_{k=0}^\infty$ be generated by  CP-$y\bar yx$ given in \eqref{yyx} or CP-$x\bar xy$ given in \eqref{xxy} from any starting point    $w^0 = (x^0,y^0) \in\calW$.
The following results hold.
\begin{enumerate}
  \item The sequence $\{w^k = (x^k,y^k)\}_{k=0}^\infty$ converges to a solution of \eqref{mVI}, i.e., there exists $w^\star  = (x^\star,y^\star) \in\calW^*$ such that $\lim_{k\rightarrow\infty}w^k = w^\star$, where $x^\star$ and $y^\star$ are, respectively, solutions of (P$_1$) and (D$_1$).
  \item   For any fixed integer $k>0$, define $\bar w^k := \frac{1}{k+1} \sum_{i=0}^k w^{i+1}$. Then, it holds that
\begin{equation*}\label{ergodic-rate}
\bar w^k\in\calW, \;
  \theta(w) - \theta(\bar w^k)  + ( w - \bar w^k)^\top F(w)    \geq - \frac{\|w - w^0\|_G^2}{2(k+1)}, \; \forall  w\in \calW.
\end{equation*}
  \item  After $k>0$ iterations, we have
  \begin{equation*}\label{non-ergodic-rate}
    \| w^k - w^{k-1}\|_G^2 \leq   \frac{\|w^0 - w^*\|_G^2} {k}.
  \end{equation*}
Moreover, it holds  as $k\rightarrow\infty$ that
%\begin{eqnarray*}\label{non-ergodic-rate-o}
 $ \|w^k - w^{k-1}\|_G^2 = o\left(1/k\right)$.
%\end{eqnarray*}
\end{enumerate}

\end{theorem}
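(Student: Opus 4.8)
The plan is to treat both CP-$y\bar yx$ and CP-$x\bar xy$ uniformly as a weighted proximal point iteration, exploiting Lemma~\ref{lem:LADM-mVI-k+1} (and its stated analogue with $G$ as in \eqref{def:G-xxy}), which asserts that each iterate $w^{k+1}$ satisfies the variational inequality \eqref{LADM-mVI-k+1}. Under Assumption~\ref{as:tau-sigma} the relevant $G$ is symmetric positive definite, so $\|\cdot\|_G$ is a genuine norm and the standard proximal-point machinery applies. Two structural facts will be used repeatedly: $G\succ0$, and the monotonicity of $F$. In fact, since the matrix defining $F$ in \eqref{def-w-F-theta} is skew-symmetric and $F$ is linear, one has the stronger identity $\langle u-v,F(u)-F(v)\rangle=0$ for all $u,v$, which lets me freely replace $F(w^{k+1})$ by $F(w)$ inside inner products and causes all $F$-cross-terms to cancel.

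First I would establish the fundamental Fej\'er-type inequality. Fix any $w^*\in\calW^*$. Taking $w=w^*$ in \eqref{LADM-mVI-k+1} and $w=w^{k+1}$ in the MVI characterization \eqref{mVI}, adding the two inequalities, and noting that the $F$-terms cancel by skew-symmetry, yields $\langle w^{k+1}-w^*,G(w^k-w^{k+1})\rangle\ge0$. The polarization identity for the $G$-inner product then gives
\[
\|w^{k+1}-w^*\|_G^2 \le \|w^k-w^*\|_G^2 - \|w^k-w^{k+1}\|_G^2 .
\]
This single inequality drives almost everything: $\{\|w^k-w^*\|_G\}$ is nonincreasing (so $\{w^k\}$ is bounded) and, telescoping, $\sum_{k\ge0}\|w^k-w^{k+1}\|_G^2\le\|w^0-w^*\|_G^2<\infty$, whence $w^{k+1}-w^k\to0$. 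For part~1 I would take a cluster point $w^\infty$ of the bounded sequence, pass to the limit in \eqref{LADM-mVI-k+1} along the corresponding subsequence — using $w^{k+1}-w^k\to0$, continuity of the linear $F$, and lower semicontinuity of the closed proper convex $\theta$ — to conclude $w^\infty\in\calW^*$; replacing $w^*$ by $w^\infty$ in the Fej\'er inequality and using that a convergent monotone sequence possessing a subsequence tending to $0$ tends to $0$ upgrades subsequential to full convergence. That $x^\star,y^\star$ solve (P$_1$),(D$_1$) is immediate from the equivalence of \eqref{mVI} with \eqref{kkt-P2}.

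For the ergodic estimate in part~2 I would return to \eqref{LADM-mVI-k+1} for a general $w\in\calW$, replace $F(w^{k+1})$ by $F(w)$, and rewrite the $G$-term via $\langle w-w^{k+1},G(w^k-w^{k+1})\rangle=\tfrac12(\|w-w^{k+1}\|_G^2+\|w^k-w^{k+1}\|_G^2-\|w-w^k\|_G^2)$. Dropping the nonnegative middle term and summing over $k=0,\dots,K$ makes the right-hand side telescope to $-\tfrac12\|w-w^0\|_G^2$; combining the left-hand side with Jensen's inequality for the convex $\theta$ and the linearity of $F$ at $\bar w^K=\frac{1}{K+1}\sum_{i=0}^K w^{i+1}$ produces exactly the claimed $O(1/(K+1))$ bound, with $\bar w^K\in\calW$ trivially as a convex combination.

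The nonergodic rates in part~3 need one extra ingredient, and this is the step I expect to be the crux: the monotone decay of the successive differences, $\|w^{k+1}-w^k\|_G\le\|w^k-w^{k-1}\|_G$. I would obtain it by writing \eqref{LADM-mVI-k+1} at two consecutive indices, testing the iteration-$k$ inequality at $w=w^{k+2}$ and the iteration-$(k+1)$ inequality at $w=w^{k+1}$, and adding; the $\theta$-terms cancel and the $F$-terms vanish again by skew-symmetry, leaving $\langle e^{k+1},G(e^k-e^{k+1})\rangle\ge0$ with $e^j:=w^{j+1}-w^j$. A Cauchy--Schwarz step in the $G$-norm then forces $\|e^{k+1}\|_G\le\|e^k\|_G$. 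Granting this monotonicity, the bound in part~3 follows from $k\|e^{k-1}\|_G^2\le\sum_{j=0}^{k-1}\|e^j\|_G^2\le\|w^0-w^*\|_G^2$, and the refinement to $o(1/k)$ from the standard fact that a nonincreasing summable nonnegative sequence $a_k$ satisfies $ka_k\to0$, applied to $a_k=\|e^k\|_G^2$ using the summability from part~1. The main obstacle is thus not any single hard estimate but correctly orchestrating the VI at successive iterations to extract the step-monotonicity; everything else is bookkeeping with $G$-inner-product identities and the skew-symmetry of $F$.
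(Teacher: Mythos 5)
Your proposal is correct, and it takes precisely the route the paper itself indicates: the paper omits the proof of Theorem \ref{thm:convergence-LADM}, stating only that the results follow ``very conveniently under the PPM framework'' established by Lemma \ref{lem:LADM-mVI-k+1} (with $G$ from \eqref{def:G-yyx} or \eqref{def:G-xxy} positive definite under Assumption \ref{as:tau-sigma}), and defers details to \cite{CP11,HY12a,HY12b,ST14,CMY14a}. Your argument --- Fej\'er monotonicity in the $G$-norm obtained by testing \eqref{LADM-mVI-k+1} against \eqref{mVI} with the $F$-terms cancelling by skew-symmetry, the telescoped ergodic bound with Jensen's inequality, and the nonergodic $O(1/k)$ and $o(1/k)$ rates via monotone decay of $\|w^{k+1}-w^k\|_G$ from the VI at consecutive iterations --- is exactly the standard contraction-perspective realization of that framework, so it matches the paper's intended proof.
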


\subsection{Inertial versions of CP-$y\bar yx$ and CP-$x\bar xy$}\label{sc:iLADM}
Since CPAs are applications of a general PPM, we can study the corresponding inertial algorithms by following the analysis in \cite{CMY14a}.
In this section, we propose inertial versions of CP-$y\bar yx$ and CP-$x\bar xy$ and present their convergence results.
The inertial versions of CP-$y\bar yx$ and CP-$x\bar xy$ are summarized below in Algorithms \ref{alg:iyyx} and \ref{alg:ixxy}, respectively.
\begin{algorithm}[Inertial   CP-$y\bar yx$, or iCP-$y\bar yx$]\label{alg:iyyx}
Let $\sigma, \tau >0$ and a sequence of nonnegative parameters $\{\alpha_k\}_{k=0}^\infty$ be given. Starting at any initial point $(x^0,y^0) = (x^{-1},y^{-1})$, the   algorithm iterates, for $k\geq 0$, as
\begin{subequations}\label{iyyx}
\begin{eqnarray}
\label{iyyx-xhat}
\hat x^k &=&  x^k + \alpha_k  (x^k-x^{k-1}),  \\
\label{iyyx-yhat}
\hat y^k &=&  y^k + \alpha_k  (y^k-y^{k-1}),  \\
\label{iyyx-y}
y^{k+1} &=& \text{prox}_{\sigma}^{g^*}(\hat y^k + \sigma A \hat x^k), \\
\label{iyyx-ybar}
\bar y^{k+1} &=& 2y^{k+1} - \hat y^k, \\
\label{iyyx-x}
x^{k+1} &=& \text{prox}_{\tau}^f(\hat x^k - \tau A^\top  \bar y^{k+1}).
\end{eqnarray}
\end{subequations}
\end{algorithm}

\begin{algorithm}[Inertial   CP-$x\bar xy$, or iCP-$x\bar xy$]\label{alg:ixxy}
Let $\sigma, \tau >0$ and a sequence of nonnegative parameters $\{\alpha_k\}_{k=0}^\infty$ be given. Starting at any initial point $(x^0,y^0) = (x^{-1},y^{-1})$, the   algorithm iterates, for $k\geq 0$, as
\begin{subequations}\label{ixxy}
\begin{eqnarray}
\label{ixxy-xhat}
\hat x^k &=&  x^k + \alpha_k  (x^k-x^{k-1}),  \\
\label{ixxy-yhat}
\hat y^k &=&  y^k + \alpha_k  (y^k-y^{k-1}),  \\
\label{ixxy-x}
x^{k+1} &=& \text{prox}_{\tau}^f(\hat x^k - \tau A^\top  \hat y^k),  \\
\label{ixxy-xbar}
\bar x^{k+1} &=& 2x^{k+1} - \hat x^k, \\
\label{ixxy-y}
y^{k+1} &=& \text{prox}_{\sigma}^{g^*}(\hat y^k + \sigma A \bar x^{k+1}).
\end{eqnarray}
\end{subequations}
\end{algorithm}
We will refer to Algorithms \ref{alg:iyyx} and \ref{alg:ixxy} as iCP-$y\bar yx$ and iCP-$x\bar xy$, respectively.
Recall that we use the notation $w$, $\theta$ and $F$ defined in \eqref{def-w-F-theta}.  We further define
\begin{eqnarray}\label{def:what} %\nonumber
   \hat w^k &:=& w^k + \alpha_k (w^k - w^{k-1}).
\end{eqnarray}
According to Theorem \ref{thm:convergence-LADM}, the new point $w^{k+1}$ generated by iCP-$y\bar yx$  or  iCP-$x\bar xy$  conforms to
  \begin{equation}\label{iLADM-mVI}
  w^{k+1} \in \calW, \;     \theta(w) - \theta(w^{k+1}) + \langle w - w^{k+1}, F(w^{k+1}) + G (w^{k+1} -  \hat w^k)\rangle \geq 0, \; \forall w\in \calW,
  \end{equation}
where $G$ is given by \eqref{def:G-yyx} for  iCP-$y\bar yx$  and \eqref{def:G-xxy} for  iCP-$x\bar xy$.

The global point-convergence,  nonasymptotic $O(1/k)$  and asymptotic $o(1/k)$ convergence rate results of iCP-$y\bar yx$ and iCP-$x\bar xy$, or equivalently \eqref{def:what}-\eqref{iLADM-mVI} with
$G$ given by \eqref{def:G-yyx} for  iCP-$y\bar yx$  and \eqref{def:G-xxy} for  iCP-$x\bar xy$, follow directly from \cite[Theorem 2.2]{CMY14a}.

\begin{theorem}[Convergence results of iCP-$y\bar yx$ and iCP-$x\bar xy$]\label{Theorem1}
Suppose that $\tau$ and $\sigma$ satisfy Assumption  \ref{as:tau-sigma}, and, for all $k\geq 0$, it holds that $0\leq \alpha_k \leq \alpha_{k+1}\leq \alpha$ for some $0\leq\alpha < 1/3$.
Let $\{w^k\}_{k=0}^\infty$ conform to \eqref{iyyx} or \eqref{ixxy},
or equivalently, \eqref{def:what}-\eqref{iLADM-mVI} with $G$ given by \eqref{def:G-yyx} for  iCP-$y\bar yx$  and \eqref{def:G-xxy} for  iCP-$x\bar xy$.
Then, the following results hold.
\begin{enumerate}
    \item[(i)] The sequence $\{w^k\}_{k=0}^\infty$ converges to a member in $\calW^*$ as $k\rightarrow\infty$;
    \item[(ii)]  For any $w^*\in\calW^*$ and positive integer $k$, it holds that
\begin{equation}
  \label{rate:O(1/k)}
  \min_{0\leq j\leq k-1}  \|w^{j+1}-  \hat w^j\|_G^2 \leq \frac{\left(1+ \frac{2}{ 1-3\alpha}\right)\|w^0-w^*\|_G^2}{ k }.
\end{equation}
Furthermore, it holds as $k\rightarrow\infty$ that
\begin{eqnarray}
  \label{rate:o(1/k)}
  \min_{0\leq j\leq k-1}\Gnorm{w^{j+1}-\hat w^j }^2 = o\left(1\over k\right).
\end{eqnarray}
\end{enumerate}
\end{theorem}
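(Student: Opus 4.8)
The plan is to reduce the statement entirely to the abstract inertial proximal point result \cite[Theorem 2.2]{CMY14a}, whose hypotheses are tailored to a variational inequality of exactly the form \eqref{iLADM-mVI}. The essential point is to verify that the iterates produced by iCP-$y\bar yx$ and iCP-$x\bar xy$ conform to \eqref{def:what}--\eqref{iLADM-mVI} with a symmetric positive definite weighting matrix $G$, and that the inertial parameters meet the required monotonicity and bound. Once this identification is made, parts (i) and (ii) become immediate transcriptions of the conclusions of the cited theorem.

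First I would establish that the inertial iterates satisfy \eqref{iLADM-mVI}. The key observation is that iCP-$y\bar yx$ given in \eqref{iyyx} is literally CP-$y\bar yx$ given in \eqref{yyx} with the current point $(x^k,y^k)$ replaced by the extrapolated point $(\hat x^k,\hat y^k)$ defined in \eqref{iyyx-xhat}--\eqref{iyyx-yhat}; the same correspondence holds between iCP-$x\bar xy$ and CP-$x\bar xy$. Therefore, writing the optimality conditions for the proximal subproblems \eqref{iyyx-y} and \eqref{iyyx-x} exactly as in the proof of Lemma \ref{lem:LADM-mVI-k+1}, but with $\hat w^k$ in place of $w^k$, and summing the two resulting inequalities, yields \eqref{iLADM-mVI} with $G$ given by \eqref{def:G-yyx} for iCP-$y\bar yx$ (and with $G$ from \eqref{def:G-xxy} for iCP-$x\bar xy$). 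This step is routine but is the crux that ports the concrete algorithms into the abstract framework.

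Next I would verify the structural hypotheses required by \cite[Theorem 2.2]{CMY14a}: the operator $F$ in \eqref{def-w-F-theta} is monotone because its defining matrix is skew-symmetric; the function $\theta$ is closed proper convex; and, under Assumption \ref{as:tau-sigma}, the matrix $G$ from either \eqref{def:G-yyx} or \eqref{def:G-xxy} is symmetric positive definite, as already noted following Lemma \ref{lem:LADM-mVI-k+1}. Together with the assumption $0\leq\alpha_k\leq\alpha_{k+1}\leq\alpha<1/3$ on the inertial parameters and the nonemptiness of $\calW^*$ (Assumption \ref{as:KKT-nonempty}), these are precisely the standing assumptions of the cited theorem.

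Finally, applying \cite[Theorem 2.2]{CMY14a} to the recursion \eqref{def:what}--\eqref{iLADM-mVI} gives global convergence of $\{w^k\}$ to some member of $\calW^*$, which is conclusion (i), and yields the nonasymptotic bound \eqref{rate:O(1/k)} together with the asymptotic refinement \eqref{rate:o(1/k)}, which is conclusion (ii). The main obstacle is genuinely confined to the first step: once \eqref{iLADM-mVI} is in hand, nothing new needs to be proved, so care is required only in carrying out the substitution $w^k\mapsto\hat w^k$ consistently through the optimality conditions and in confirming that the threshold $\alpha<1/3$ matches the one demanded by the positive-definite-$G$ case of the cited theorem.
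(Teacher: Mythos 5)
Your proposal is correct and follows essentially the same route as the paper: the authors likewise observe that the inertial iterates satisfy \eqref{def:what}--\eqref{iLADM-mVI} with $G$ from \eqref{def:G-yyx} or \eqref{def:G-xxy} (symmetric positive definite under Assumption \ref{as:tau-sigma}, by the argument of Lemma \ref{lem:LADM-mVI-k+1} carried out at the extrapolated point $\hat w^k$), and then invoke \cite[Theorem 2.2]{CMY14a} to obtain both the global point-convergence and the $O(1/k)$ and $o(1/k)$ rates. Your explicit verification of the monotonicity of $F$, convexity of $\theta$, and the parameter condition $\alpha<1/3$ is exactly the checking the paper leaves implicit.
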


It is easy to see from \eqref{iLADM-mVI} and the definition of $G$ in \eqref{def:G-yyx} or \eqref{def:G-xxy} that if $w^{k+1} = \hat w^k$ then $w^{k+1}$ is a solution of \eqref{mVI}.   Thus, the $O(1/k)$ and $o(1/k)$ results given in \eqref{rate:O(1/k)} and \eqref{rate:o(1/k)}, respectively, can be viewed as convergence rate results of  iCP-$y\bar yx$ and iCP-$x\bar xy$.

It is also worth to point out that the global convergence result given in Theorem \ref{Theorem1} is point-convergence, which is stronger than convergence in function values for the accelerated methods in \cite{Nes83,BT09}, which in fact can also be viewed as inertial type methods. Our stronger convergence result is obtained at the cost of more restrictive conditions on the inertial extrapolation parameters $\{\alpha_k\}_{k=0}^\infty$.

\section{Numerical results}\label{sc:numerical}
In this section, we present numerical results to compare the performance of CPAs and the proposed inertial CPAs.
In particular, we mainly concentrate on CP-$y\bar yx$  and its inertial variant iCP-$y\bar yx$ (the reasons will be explained below in Section \ref{sc:numerical-reason}).
All algorithms were implemented in MATLAB, and the experiments were performed with Microsoft Windows 8 and MATLAB v7.13 (R2011b),
running on a 64-bit Lenovo laptop with an Intel Core i7-3667U CPU at 2.00 GHz and 8 GB of memory.
Here, we only concentrate on a total variation (TV) based image reconstruction problem and compare iCP-$y\bar yx$ with CP-$y\bar yx$.
The performance of CPAs or LADMMs relative to other state-of-the-art algorithms is well illustrated in the literature, see, e.g., \cite{CP11} for various imaging problems, \cite{YZ11} for $\ell_1$-norm minimization in compressive sensing, \cite{YY13} for nuclear norm minimization in low-rank matrix completion, and \cite{WY12} for Danzig selector.

\subsection{Compressive image reconstruction based on total variation minimization}
In our experiments, we tested the problem of reconstructing an image from a number of its linear measurements, as in the setting of compressive sensing.
The reconstruction is realized via total variation  minimization. In variational image processing, TV minimizations have been widely used ever since the pioneering work \cite{ROF92} and have empirically shown to give favorable reconstruction results. It is well known that the edges of images can be well preserved if one minimizes the TV.
Another very important reason of the popularity of TV minimizations for image restoration is the availability of very fast numerical algorithms, e.g., \cite{Cha04,WYYZ08,GO09}.
In the compressive sensing setting, exact reconstruction guarantee of piecewise constant images from their incomplete frequencies and via  TV minimization was first obtained in \cite{CRT06}. Lately, it was shown in \cite{NW13a} that an image can be accurately recovered to within its best $s$-term approximation of its gradient from approximately $O(s \log(N))$ nonadaptive linear measurements, where $N$ denotes the number of pixels of the underlying image. In particular, the reconstruction is exact if the gradient of the image is precisely sparse, i.e., the image is piecewise constant. This is even true for high dimensional signals, see  \cite{NW13b}.

In the following, we let $A^{(1)}, A^{(2)} \in\Re^{n^2 \times n^2}$ be the first-order  global forward finite difference matrices (with certain boundary conditions assumed) in the horizontal and the vertical directions, respectively.
Let $A_i\in\Re^{2\times n^2}$, $i=1,2,\ldots,n^2$, be the corresponding first-order local forward finite difference operator at the $i$th pixel, i.e., each $A_i$ is a two-row matrix formed by stacking the $i$th rows of $A^{(1)}$ and $A^{(2)}$.
Let $x^*\in \Re^{n^2}$ be an original $n$-by-$n$ image, whose columns are stacked in an left-upper and right-lower order to form a vector of length $n^2$. Our discussions can be applied to rectangle images, and here we concentrate on square images only for simplicity.
Given a set of linear measurements $b = \calB x^* \in\Re^q$, where $\calB: \Re^{n^2} \rightarrow \Re^q$  is a linear operator.
The theory developed in \cite{NW13a} guarantees that one can reconstruct $x^*$ from $\calB$ and $b$ to within certain high accuracy, as long as $\calB$ satisfies certain technical conditions.
Specifically, to reconstruct $x^*$ from $\calB$ and $b$, one seeks an image that fits the observation data and meanwhile has the minimum TV, i.e.,
a solution of the following TV minimization problem
\begin{equation}
  \label{prob:tvcs}
  \min_{x\in\Re^{n^2}} \iota_{\{x: \; \calB x = b\}}(x) + \sum\nolimits_{i=1}^{n^2} \|A_ix\|.
\end{equation}
Here $\iota_S(x)$ denotes the indicator function of a set $S$, i.e., $\iota_S(x)$ is equal to $0$ if $x\in S$ and $\infty$ otherwise.
For $u_j\in \Re^{n^2}$, $j=1,2$, we define
\begin{equation*}
  \label{def-u-bui}
u := \left(
      \begin{array}{c}
        u_1 \\
        u_2 \\
      \end{array}
    \right) \in \Re^{2n^2},
\quad
\bu_i := \left(
          \begin{array}{c}
            (u_1)_i \\
            (u_2)_i \\
          \end{array}
        \right)\in\Re^2, \; i=1,2,\ldots,n^2,
\quad
A := \left(
       \begin{array}{c}
         A^{(1)} \\
         A^{(2)} \\
       \end{array}
     \right)\in\Re^{2n^2 \times n^2}.
\end{equation*}
Note that $u = (u_1,u_2)$ and $\{\bu_i: i=1,2,\ldots,n^2\}$ denote the same set of variables.
Let $f: \Re^{n^2} \rightarrow (-\infty,\infty]$ and $g: \Re^{2n^2} \rightarrow (-\infty,\infty)$  be, respectively, defined as
\begin{subequations}
  \label{def-gf-tvcs}
\begin{eqnarray}
\label{def-gf-tvcs-f}
  f(x) &:=&  \iota_{\{x: \; \calB x = b\}}(x), \; x\in\Re^{n^2},\\
\label{def-gf-tvcs-g}
  g(u) &:=&  g(u_1,u_2) = \sum_{i=1}^{n^2} \|\bu_i\|, \; u = (u_1, u_2) \in\Re^{2n^2}.
\end{eqnarray}
\end{subequations}
Then,  \eqref{prob:tvcs} can be rewritten as $  \min_{x\in\Re^{n^2}}  f(x) + g(Ax)$, which is clearly in the form of (P$_1$).
Let $\calB^*$ be the adjoint operator of $\calB$ and ${\cal I}$ be the identity operator.
In our experiments, the linear operator $\calB$ satisfies $\calB\calB^* = {\cal I}$.
Therefore, the proximity operator of $f$ is given by
\begin{equation}\label{prox-f-tvcs}
   \text{prox}^f(x) =  x + \calB^*(b - \calB x), \; x\in\Re^{n^2}.
\end{equation}
Note that the proximity operator of an indicator function reduces to the orthogonal projection onto the underlying set. The proximity parameter is omitted because it is irrelevant in this case.
On the other hand, with the convention $0/0=0$,  the proximity operator of ``$\|\cdot\|$" is given by
\begin{equation}\label{prox-||}
\text{prox}_{\eta}^{\|\cdot\|}(\bu_i) = \max\left\{\|\bu_i\| - \eta, 0\right\} \times \frac{\bu_i}{\|\bu_i\|}, \; \bu_i\in\Re^2, \; \eta >0.
\end{equation}
Furthermore, it is easy to observe from \eqref{def-gf-tvcs-g} that $g$ is separable with respect to $\bu_i$ and thus the proximity operator of $g$ can also be expressed explicitly.
Therefore, the functions $f$ and $g$ defined in \eqref{def-gf-tvcs} satisfy Assumption \ref{as:f-g-simple}.
As a result, CPAs and the proposed inertial CPAs are easy to implement.

\subsection{Experimental data}
In our experiments, the linear operator $\calB$ is set to be randomized partial Walsh-Hadamard transform matrix, whose rows are randomly chosen and columns randomly permuted.
Therefore, it holds that $\calB\calB^* = {\cal I}$.
Specifically, the Walsh-Hadamard transform matrix of order $2^j$ is defined recursively as
\[
H_{2^0} = [1],
H_{2^1} = \left[
            \begin{array}{cc}
              1 & 1 \\
              1 & -1 \\
            \end{array}
          \right],
\ldots,
H_{2^j} = \left[
            \begin{array}{cc}
              H_{2^{j-1}} & H_{2^{j-1}} \\
              H_{2^{j-1}} & -H_{2^{j-1}} \\
            \end{array}
          \right].
\]
It can be shown that $H_{2^j}H_{2^j}^\top = I$. In our experiments, the linear operator $\calB$ contains random selected rows from $2^{j/2}H_{2^j}$, where $2^{j/2}$ is a normalization factor.
It is worth to point out that for some special linear operators, e.g., $\calB$ is a partial Fourier matrix or a partial discrete cosine transform, \eqref{prob:tvcs} (and its denoising  variants  when the observation data contains noise) can be solved
by the classical ADM framework \eqref{ADM-xuy} without proximal-linearizing any of the subproblems, as long as the constraints are wisely treated and the finite difference
operations are assumed to satisfy appropriate boundary conditions. In these cases, the $x$-subproblem can usually be solved by fast transforms, see, e.g., \cite{NCT99,WYYZ08,YZY10}.
In our setting, the matrices $A^\top A$ and $\calB^*\calB$ cannot be diagonalized simultaneously, no matter what boundary conditions are assumed for $A$. Therefore, when solving the problem by the classical ADM, the $x$-subproblem is not easily solvable, no matter how the constraints $\calB x = b$ are adapted (e.g., penalization or relaxation).
In contrast, when solving the problem by CPAs, no linear system needs to be solved and the algorithms are easily implementable as long as the proximity operators of the underlying functions can be efficiently evaluated.

We tested 12 images, most of which are obtained from the USC-SIPI image database\footnote{\url{http://sipi.usc.edu/database/}}. The image sizes are $256$-by-$256$, $512$-by-$512$ and $1024$-by-$1024$, each of which contains 4 images. The tested images, together with their names in the database, are given in Figure \ref{Fig1-images}.

\begin{figure}[htbp]
\centering{
\includegraphics[trim = 0 240 50  160, scale = 0.3]{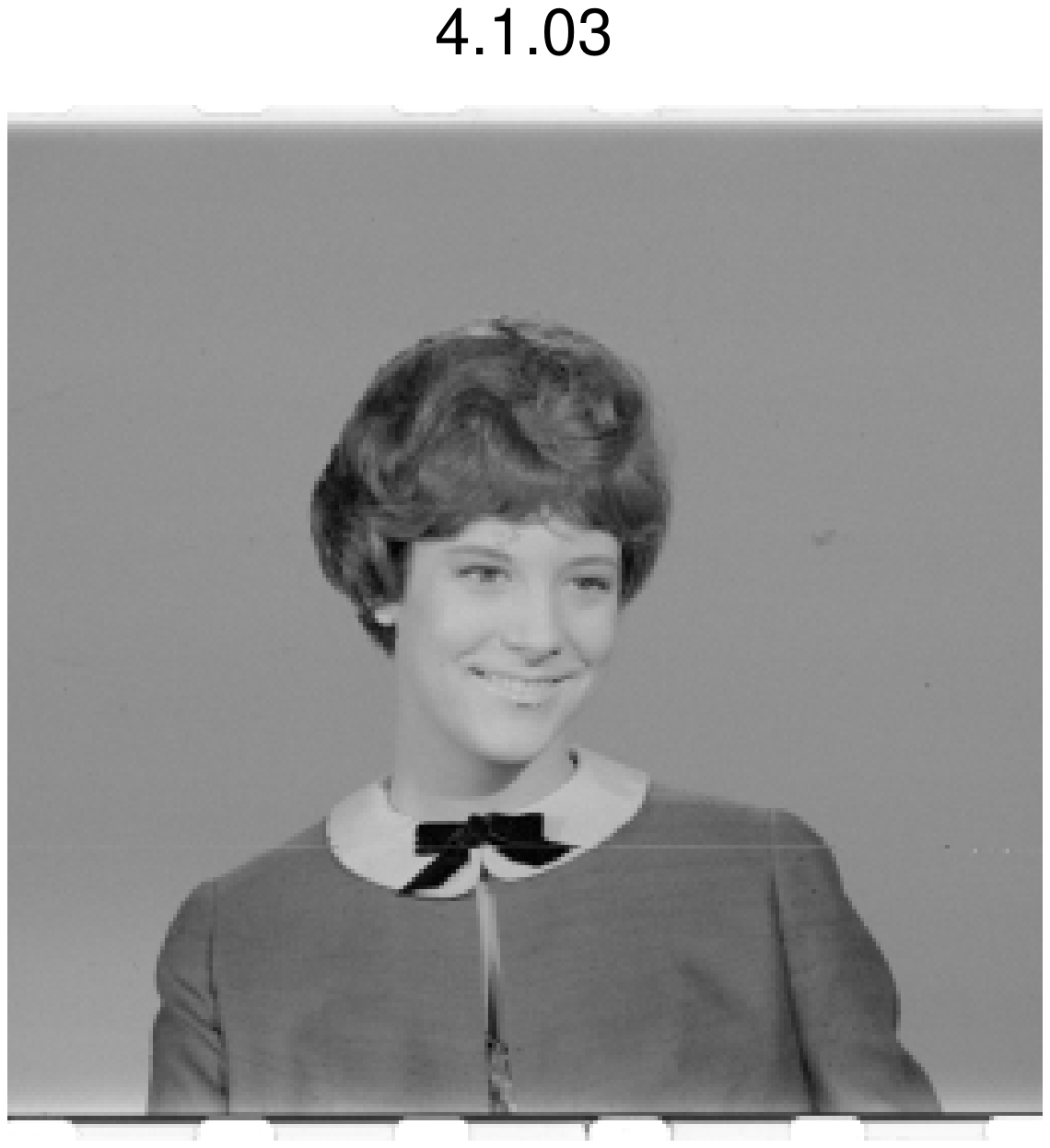}\hspace{-2.62cm}
\includegraphics[trim = 0 240 50  160, scale = 0.3]{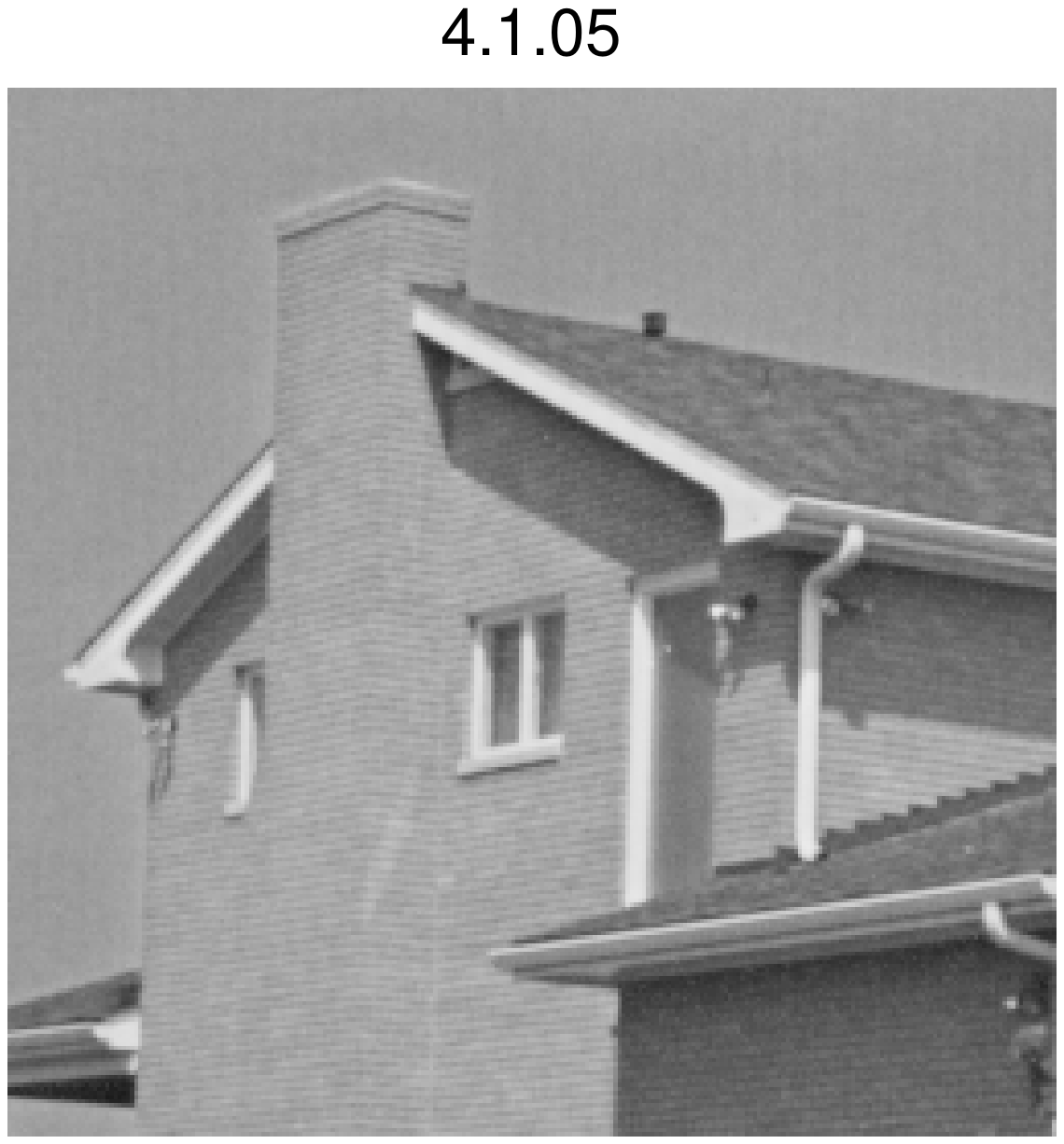}\hspace{-2.62cm}
\includegraphics[trim = 0 240 50  160, scale = 0.3]{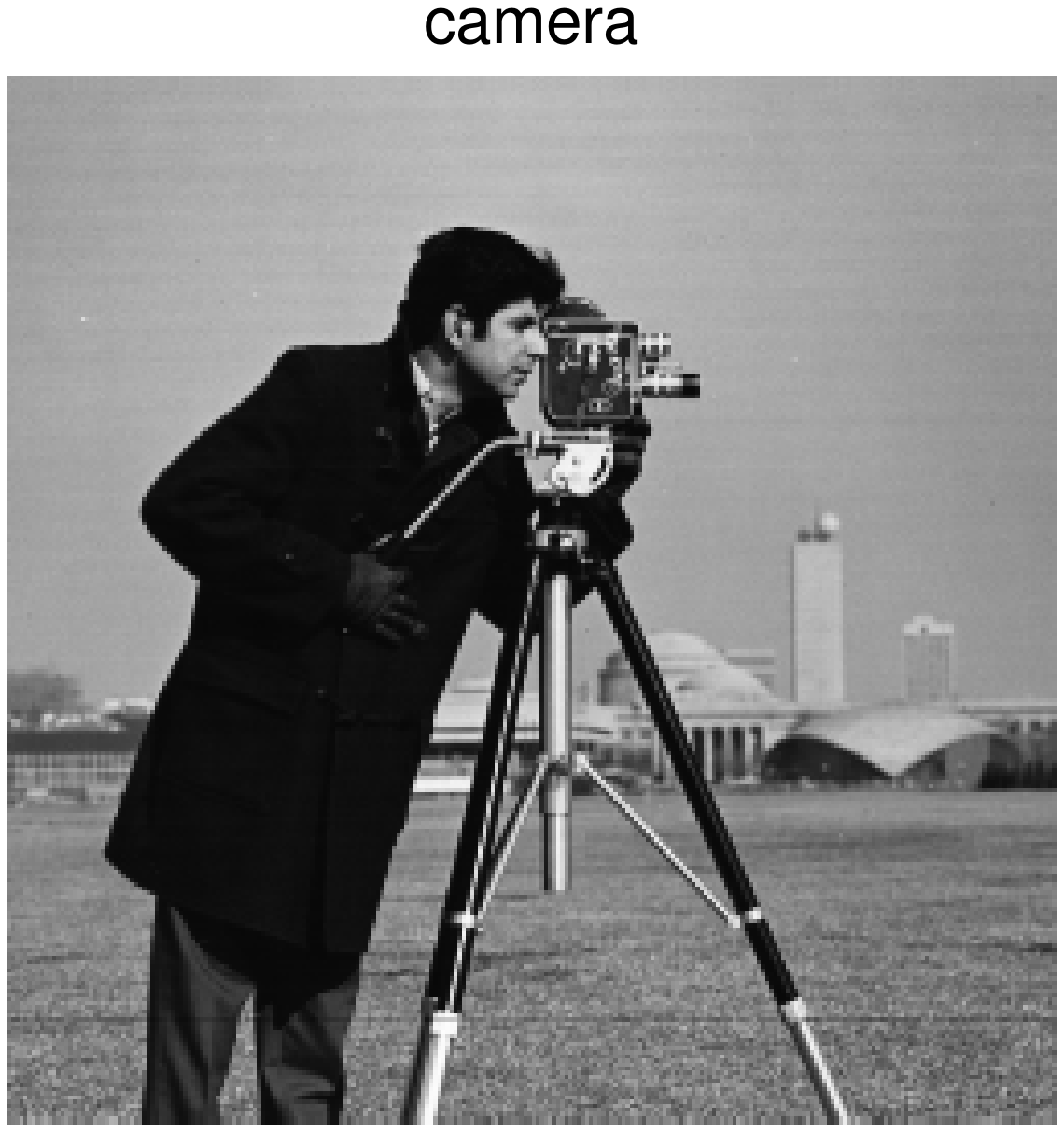}\hspace{-2.62cm}
\includegraphics[trim = 0 240 50  160, scale = 0.3]{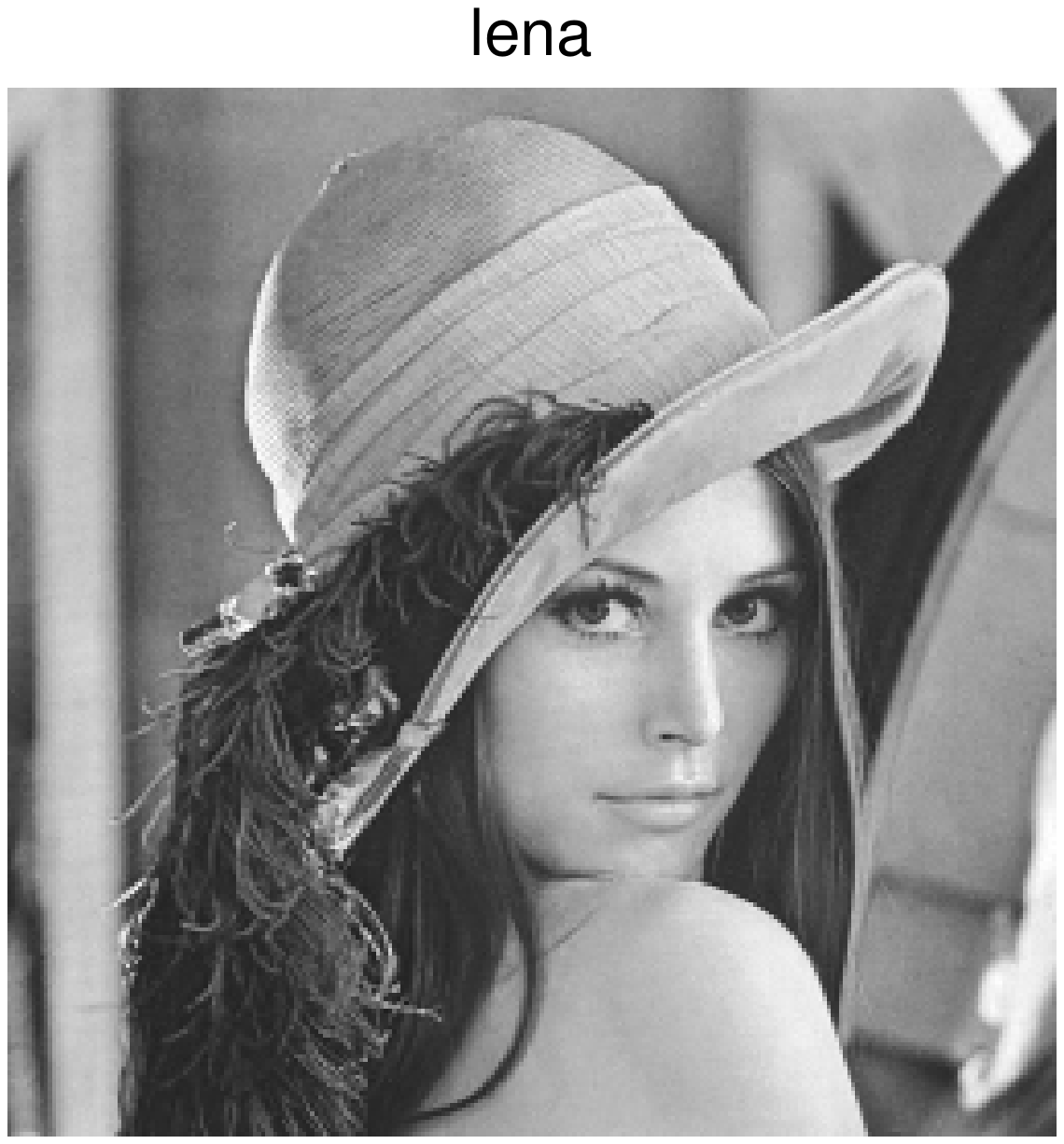}\\
\includegraphics[trim = 0 200 50  160, scale = 0.27]{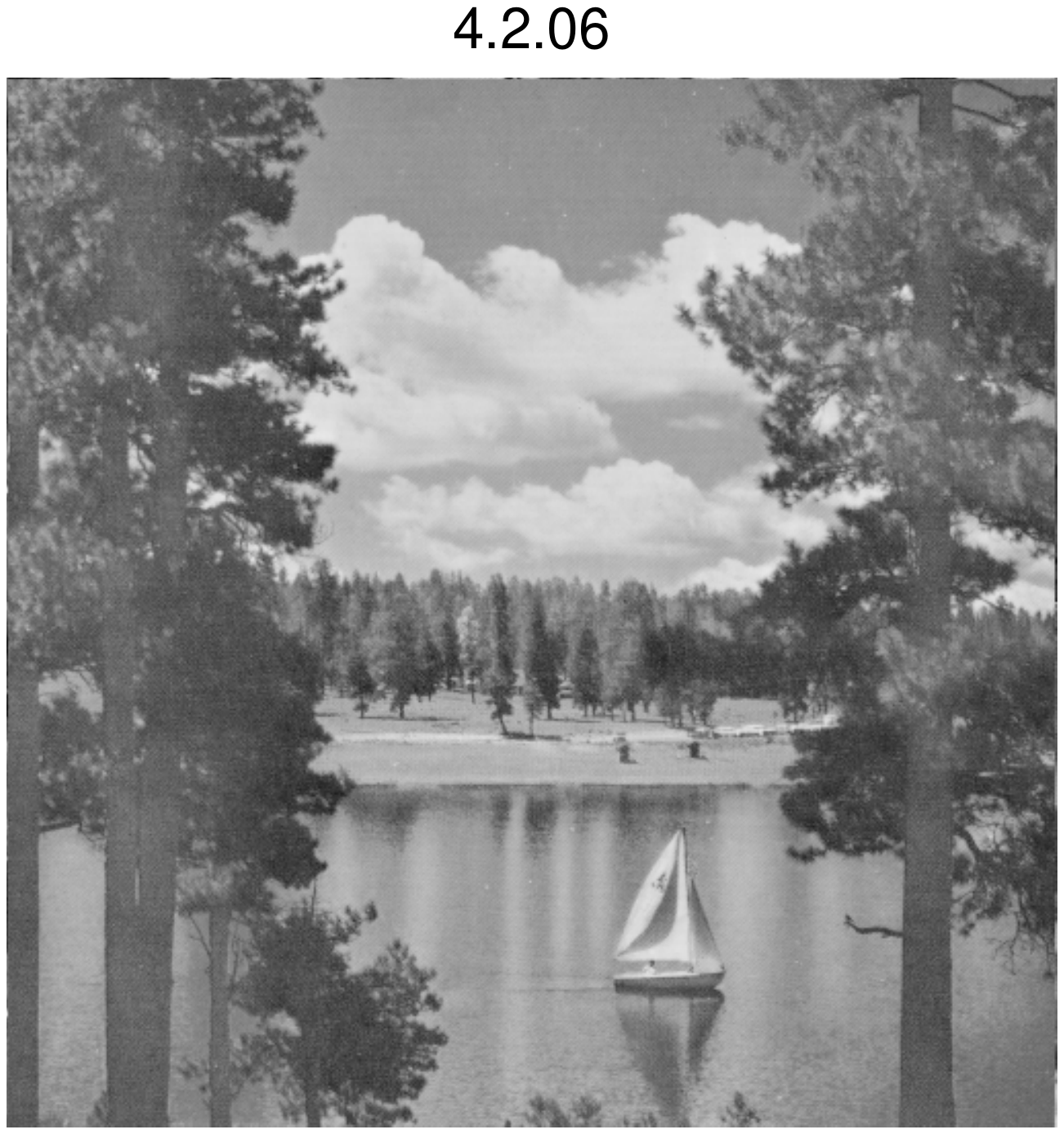}\hspace{-2cm}
\includegraphics[trim = 0 200 50  160, scale = 0.27]{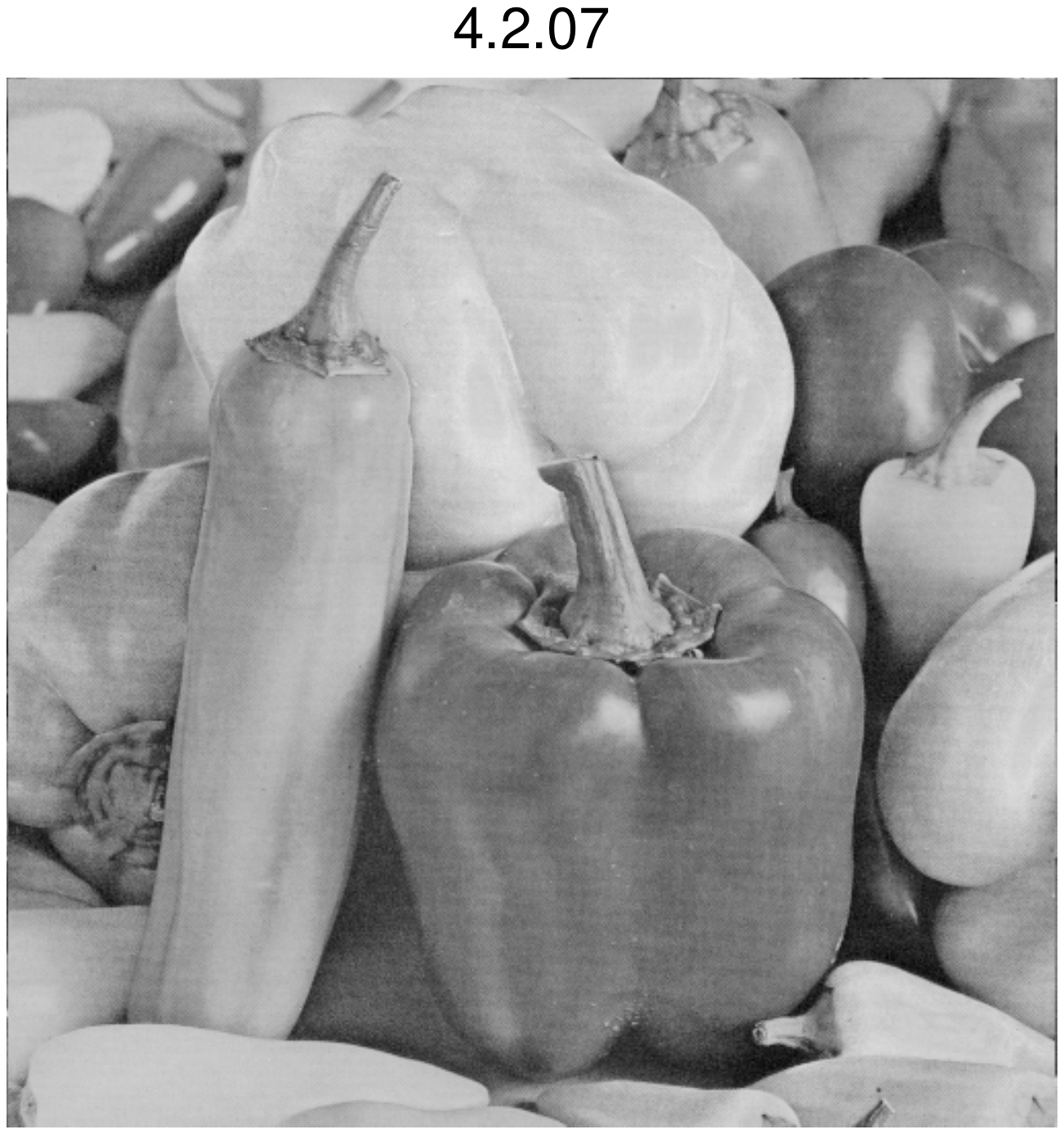}\hspace{-2cm}
\includegraphics[trim = 0 200 50  160, scale = 0.27]{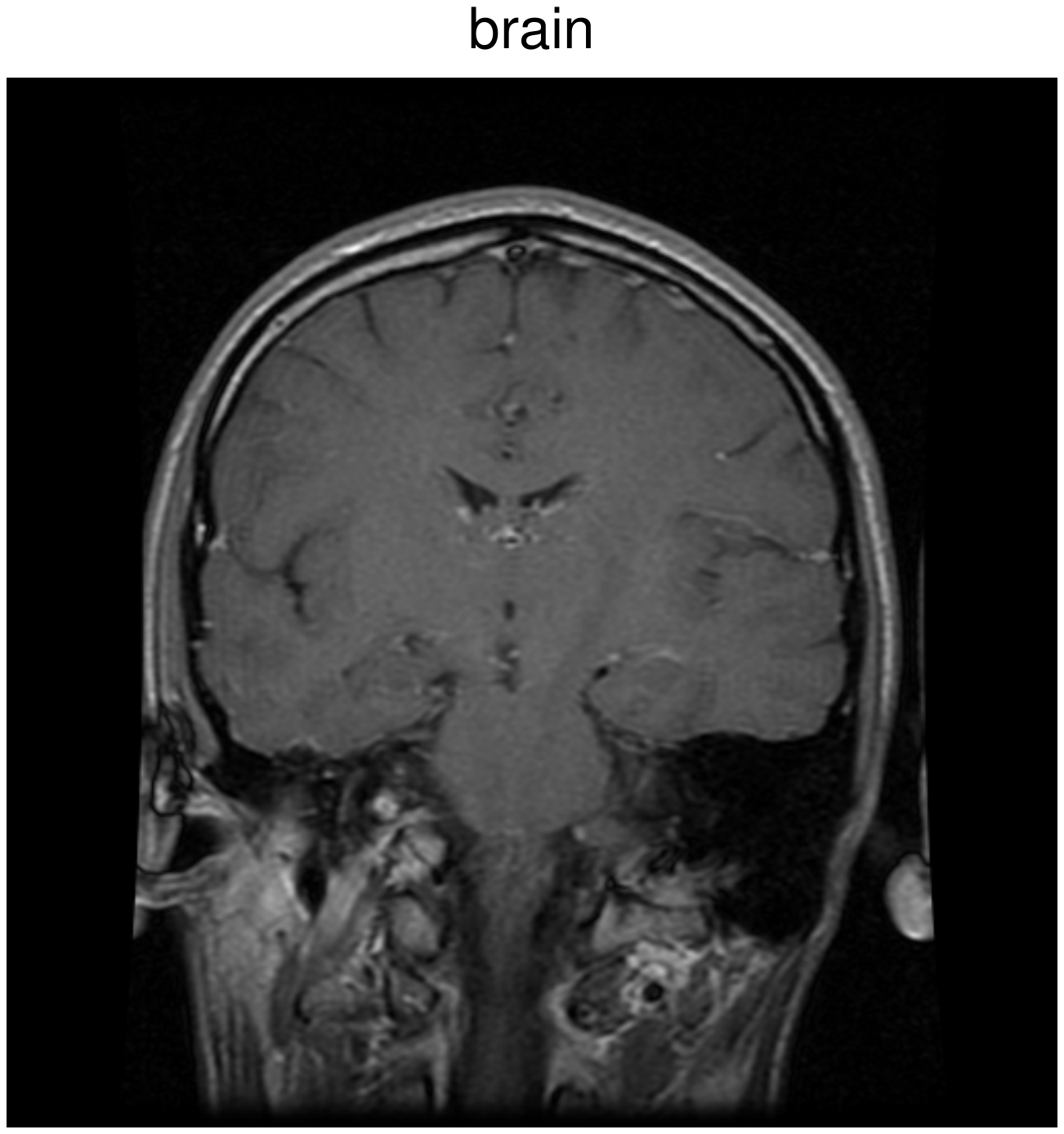}\hspace{-2cm}
\includegraphics[trim = 0 200 50  160, scale = 0.27]{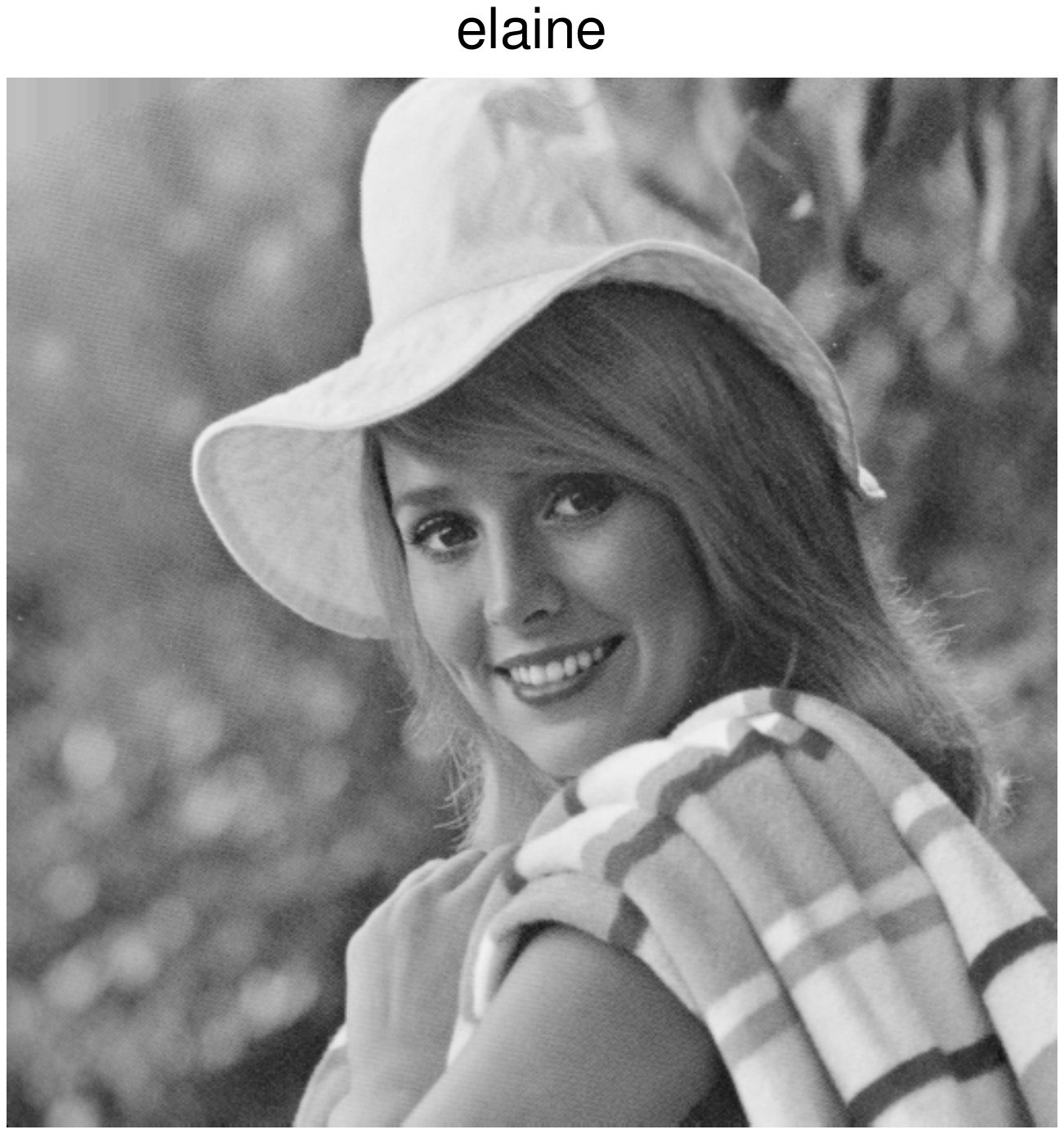}\\
\includegraphics[trim = 0 200 50  160, scale = 0.27]{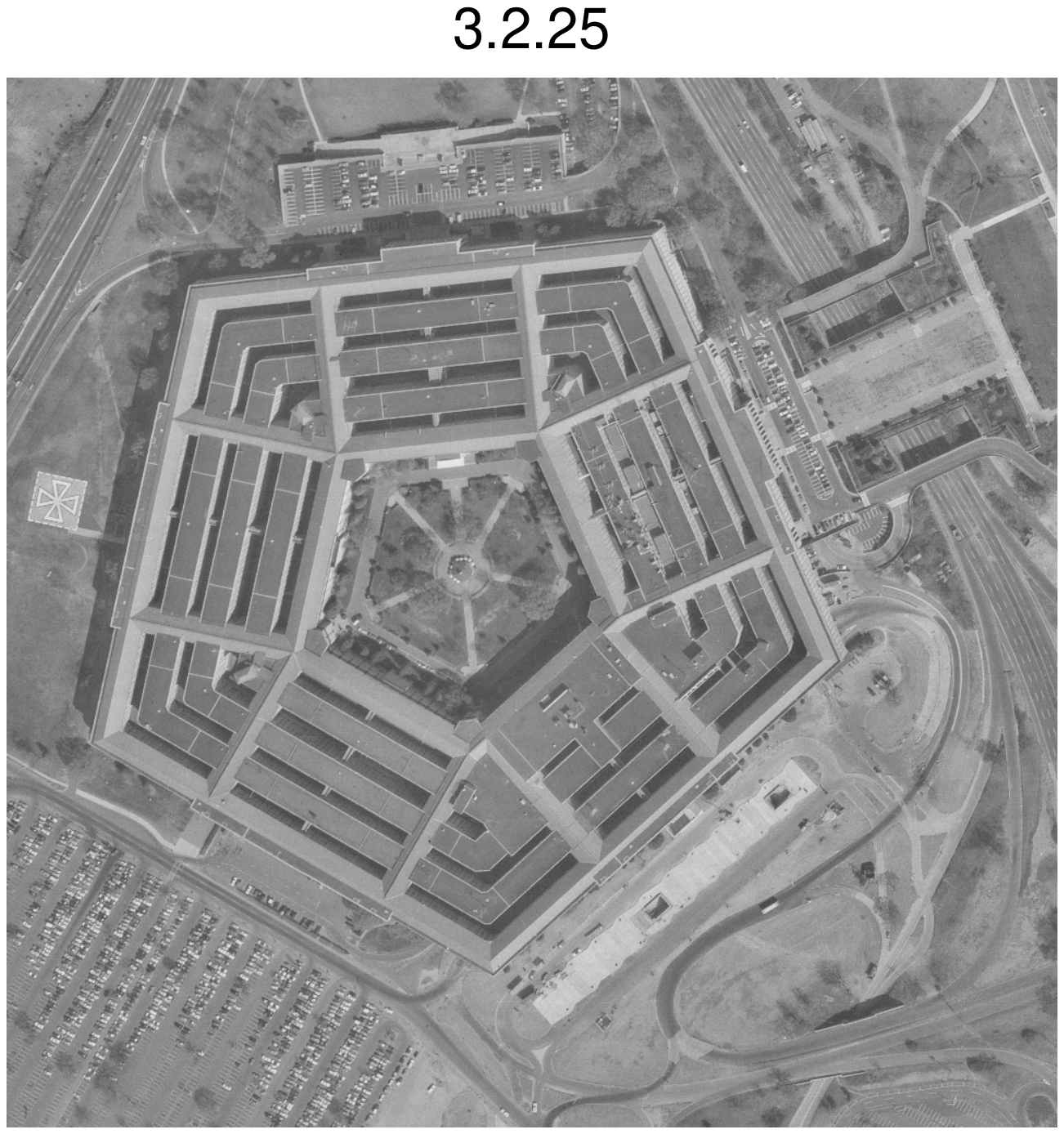}\hspace{-2cm}
\includegraphics[trim = 0 200 50  160, scale = 0.27]{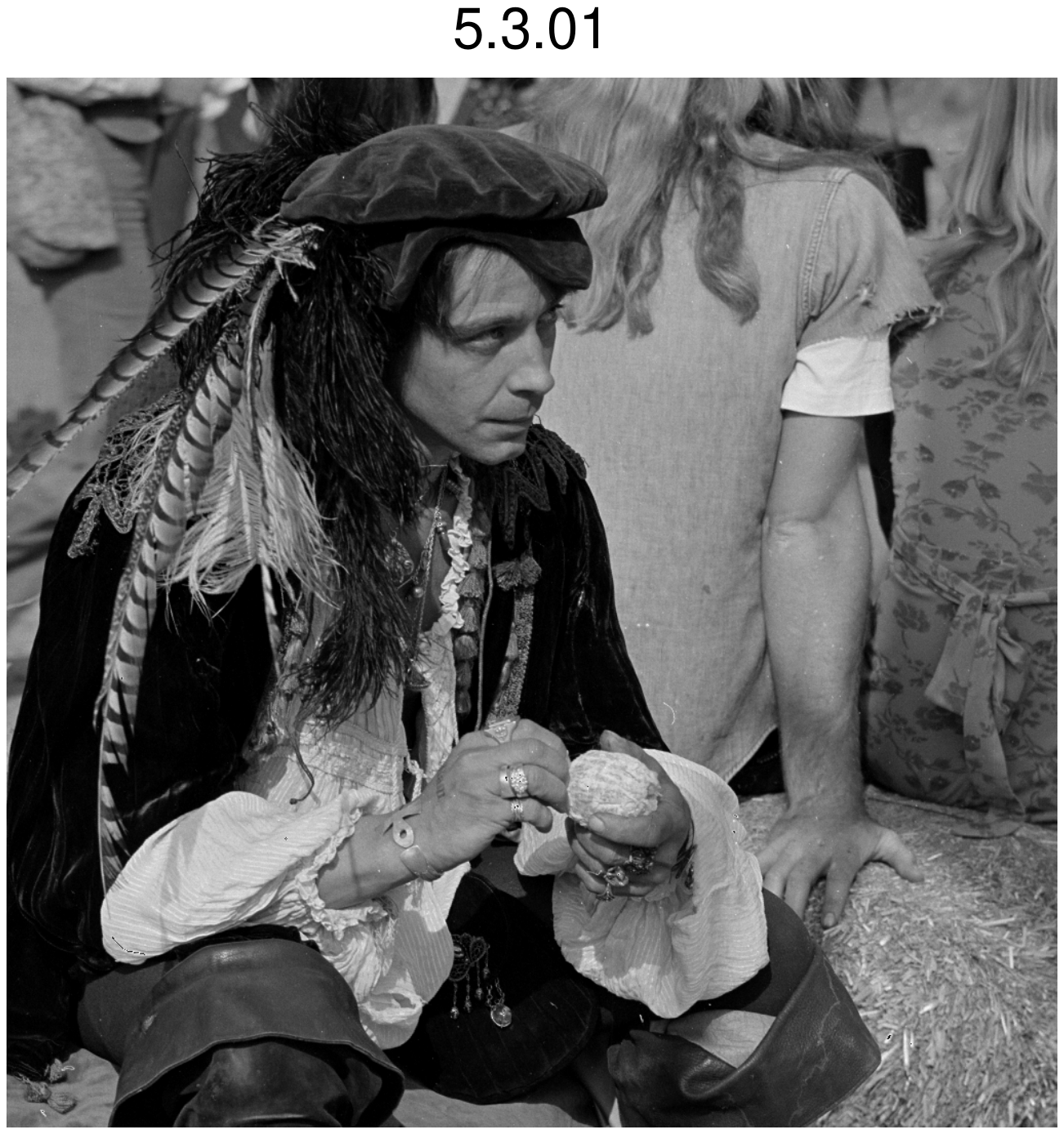}\hspace{-2cm}
\includegraphics[trim = 0 200 50  160, scale = 0.27]{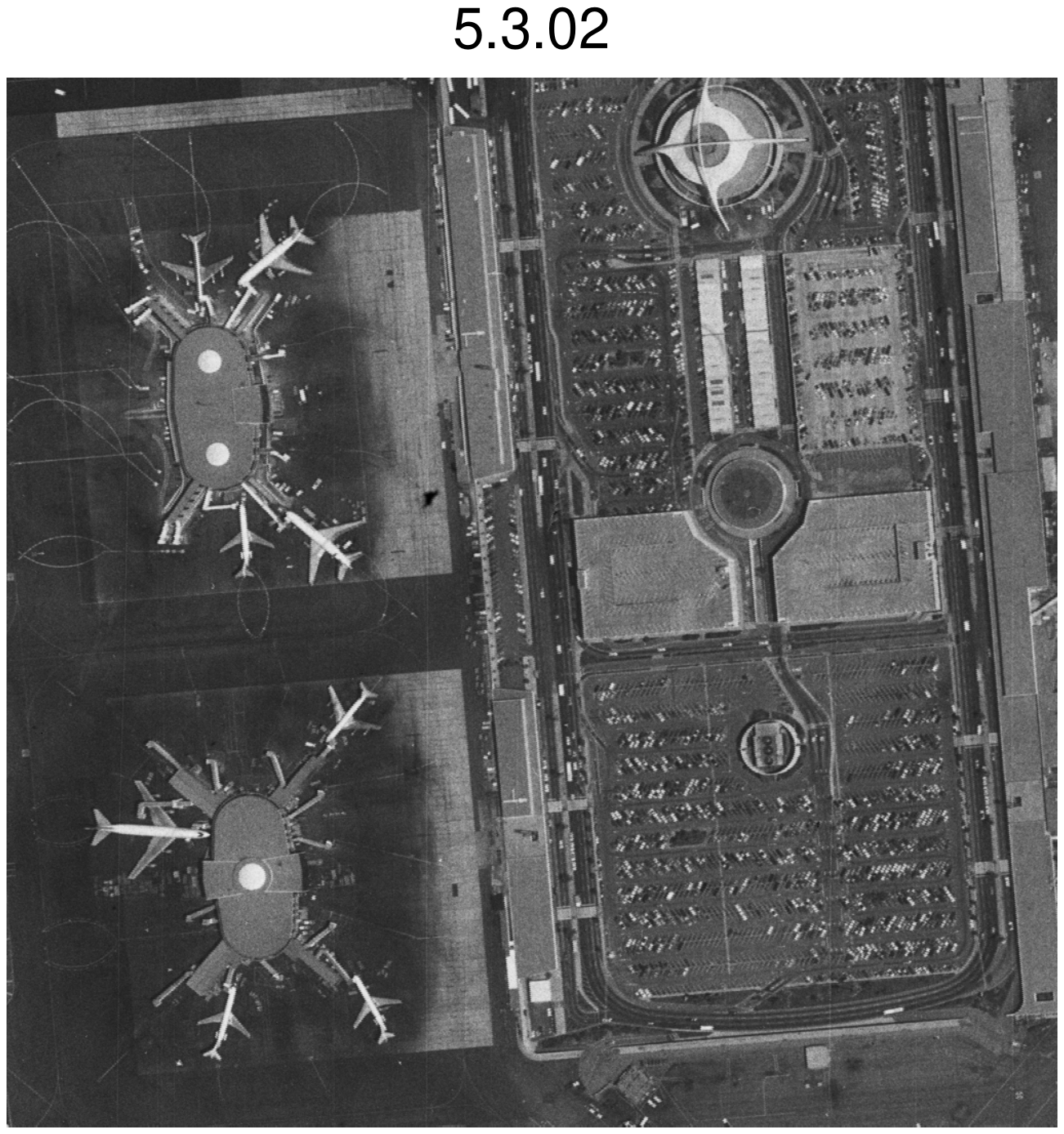}\hspace{-2cm}
\includegraphics[trim = 0 200 50  160, scale = 0.27]{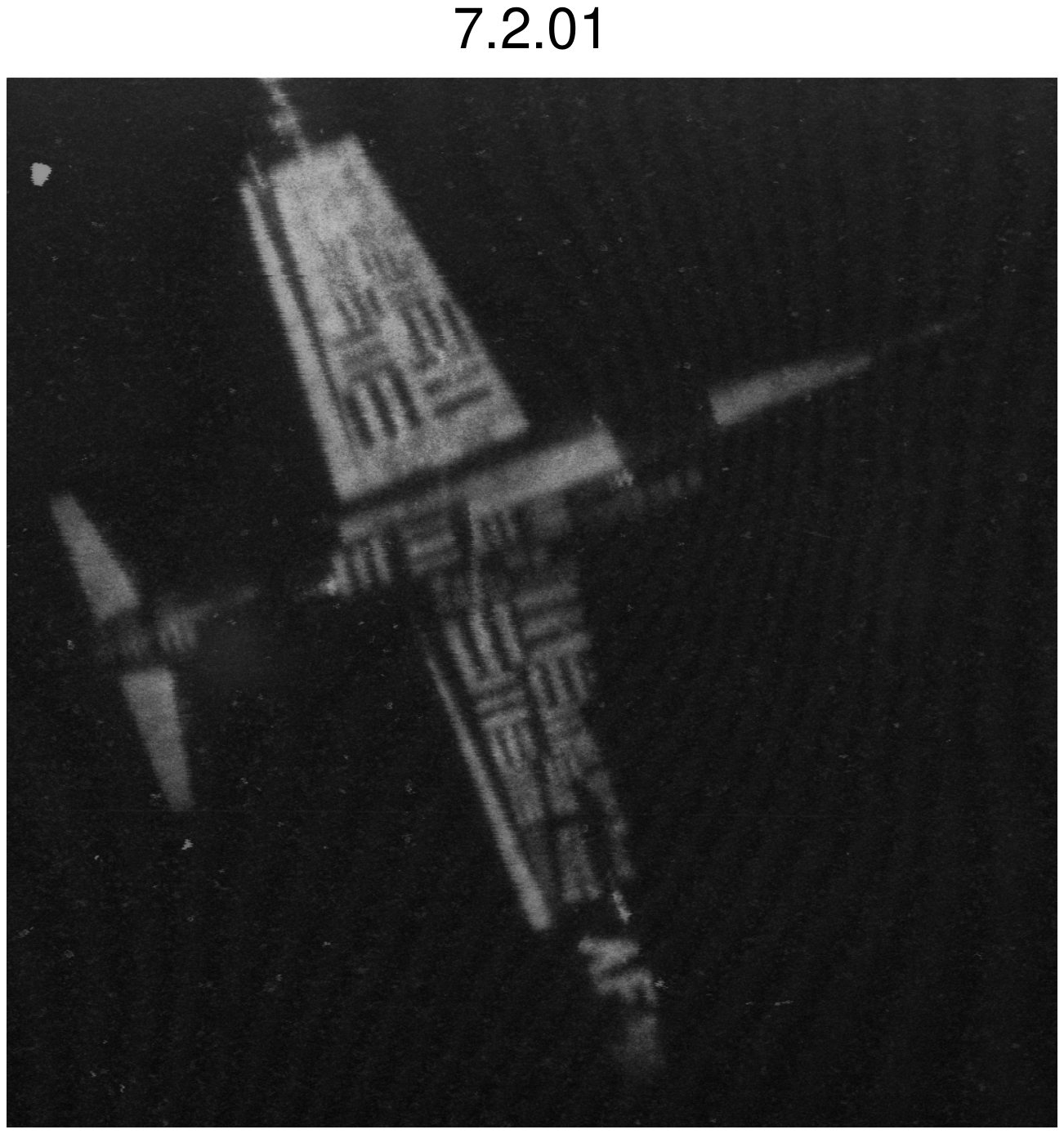}
}\caption{Tested images from the USC-SIPI image database. The image sizes from the first to the third row are $256 \times 256$, $512 \times 512$ and $1024 \times 1024$, respectively. }
\label{Fig1-images}
\end{figure}

\subsection{Parameters, initialization, stopping rules, etc}\label{sc:num-parameters}
The parameters common to CPAs  and their corresponding inertial CPAs are $\sigma$ and $\tau$, for which we used the same set of values.
In our experiments, periodic boundary conditions are assumed for the finite difference operations.  It is easy to show that $\rho(A^\top A)=8$.
The parameters $\sigma$ and $\tau$ were set to be $5$ and $0.124/\sigma$ uniformly for all tests, which may be suboptimal but perform favorably for imaging problems with appropriately scaled data. In particular, this setting satisfies the convergence requirement of all algorithms.
The extrapolation parameter $\alpha_k$ for inertial CPAs was set to be $0.28$ and held constant. This value of $\alpha_k$ is determined based on experiments. How to select $\alpha_k$ adaptively to achieve faster convergence remains a research issue. Here our main goal is to illustrate the effect of the extrapolation steps.
We will take iCP-$y\bar yx$ as an example and present some experimental results to compare its performance with different constant values of $\alpha_k$.
In our experiments, we initialized $x^0 = \calB^* b$ and $y^0 = 0$ for all algorithms.
It is clear from \eqref{LADM-mVI-k+1} that if $x^{k+1}=x^k$ and $y^{k+1}=y^k$ then a solution is already obtained.
Thus,  we terminated CPAs by %CP-$y\bar yx$ by
\begin{equation}\label{stop-rule}
  \frac{\|(x^{k+1},y^{k+1}) - (x^k,y^k)\|}{1 + \|(x^k,y^k)\|} < \varepsilon,
\end{equation}
where $\varepsilon > 0$ is a tolerance parameter, and $\|(x,y)\| := \sqrt{\|x\|^2 + \|y\|^2}$.
For inertial CPAs,
the same can be said, except that $(x^k,y^k)$ needs to be replaced by $(\hat x^k,\hat y^k)$.
Thus, we terminated inertial CPAs by
\begin{equation}\label{stop-rule-iCPA}
  \frac{\|(x^{k+1},y^{k+1}) - (\hat x^k,\hat y^k)\|}{1 + \|(\hat x^k,\hat y^k)\|} < \varepsilon.
\end{equation}
The quantities in \eqref{stop-rule} and \eqref{stop-rule-iCPA} can be viewed as optimality residues (in a relative sense).
The tolerance parameter $\varepsilon$ will be specified below.

The quality of recovered images is evaluated by signal-to-noise ratio (SNR), which is defined as
\begin{equation}\label{def:SNR}
\text{SNR} := 20\times \log_{10} \frac{\|\tilde x -x^*\|}{\|x - x^*\|}.
\end{equation}
Here $x^*$ and $x$ represent the original and the recovered images, and $\tilde x$ denotes the mean intensity of $x^*$.
Note that the constraint $\calB x = b$ is always preserved at each iteration and for all algorithms.
Therefore, we only report the  objective function value $\sum_i\|A_ix\|$, denoted by $\text{TV}(x)$, but not the data fidelity $\|\calB x - b\|$.

\subsection{On the performance of different algorithms}\label{sc:numerical-reason}
Recall that CP-$xy\bar y$ (resp. CP-$yx\bar x$) and CP-$y\bar yx$ (resp. CP-$x\bar xy$) are cyclically equivalent. Therefore, they generate exactly the same sequence of points as long as the initial points are properly chosen.  As a result, we only need to concentrate on CP-$y\bar yx$ and CP-$x\bar xy$, and compare with their corresponding inertial variants.
It is interesting that we have observed from our extensive experimental results on total variation based image reconstruction problems that CP-$y\bar yx$ and CP-$x\bar xy$ perform almost identically as long as  the same set of parameters ($\tau$ and $\sigma$) and initial points $(x^0,y^0)$ are used.
In particular, CP-$y\bar yx$ and CP-$x\bar xy$ generate two sequences of points with very close  optimality residues, objective function values (i.e.,  $\text{TV}(x)$) and  SNRs (defined in \eqref{def:SNR}). After very few iterations, these quantities usually differ little.
Note that, instructed by \eqref{LADM-mVI-k+1}, we meassured the optimality residue by ${\bf res} := \|G(w^{k+1} - w^k)\|$ in our experiments, where $G$ is given by \eqref{def:G-yyx} and \eqref{def:G-xxy}, respectively, for  CP-$y\bar yx$ and CP-$x\bar xy$.

The similar performance of CP-$y\bar yx$ and CP-$x\bar xy$ seems to be reasonable by directly comparing the iteration formulas in \eqref{yyx} and \eqref{xxy}. Another plausible explanation of this phenomenon is as follows. Recall that both CP-$y\bar yx$ and CP-$x\bar xy$ are applications of a general PPM, i.e., they satisfy \eqref{LADM-mVI-k+1} with the weighting matrix given by \eqref{def:G-yyx} and \eqref{def:G-xxy}, respectively.
By comparing the two matrices, we see that they are different only in the signs of $A$ and $A^\top$. As a result, they have exactly the same spectrum, which  performs as the essential magnitudes of proximity.

For the  respective inertial algorithms, i.e., iCP-$y\bar yx$ and iCP-$x\bar xy$, we   observed the similar performance and the same remarks given above apply.

\begin{figure}[htbp]
\centering{
\includegraphics[trim = 0 240 50  160, scale = 0.28]{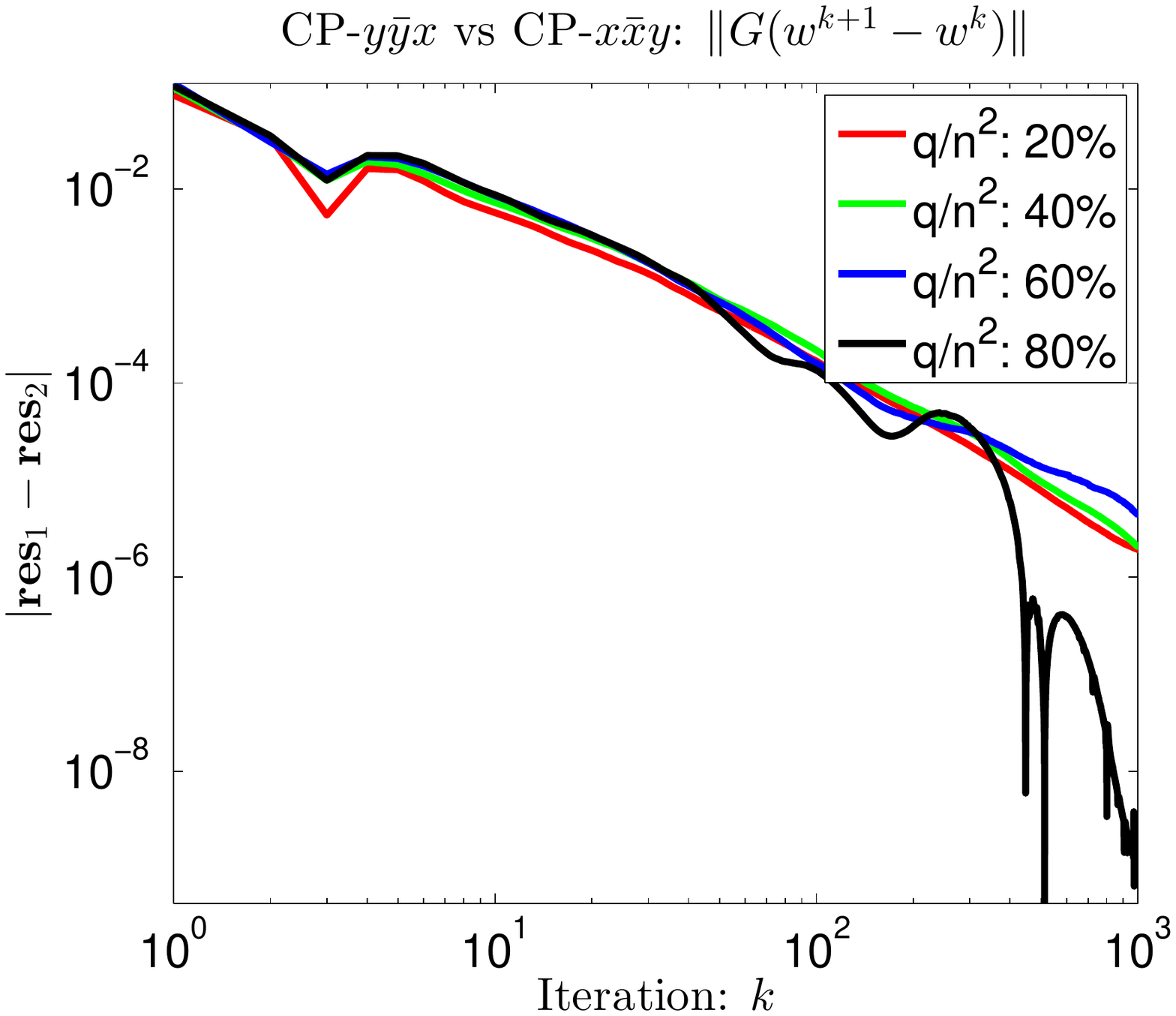}%\hspace{-0.5cm}
\includegraphics[trim = 0 240 50  160, scale = 0.28]{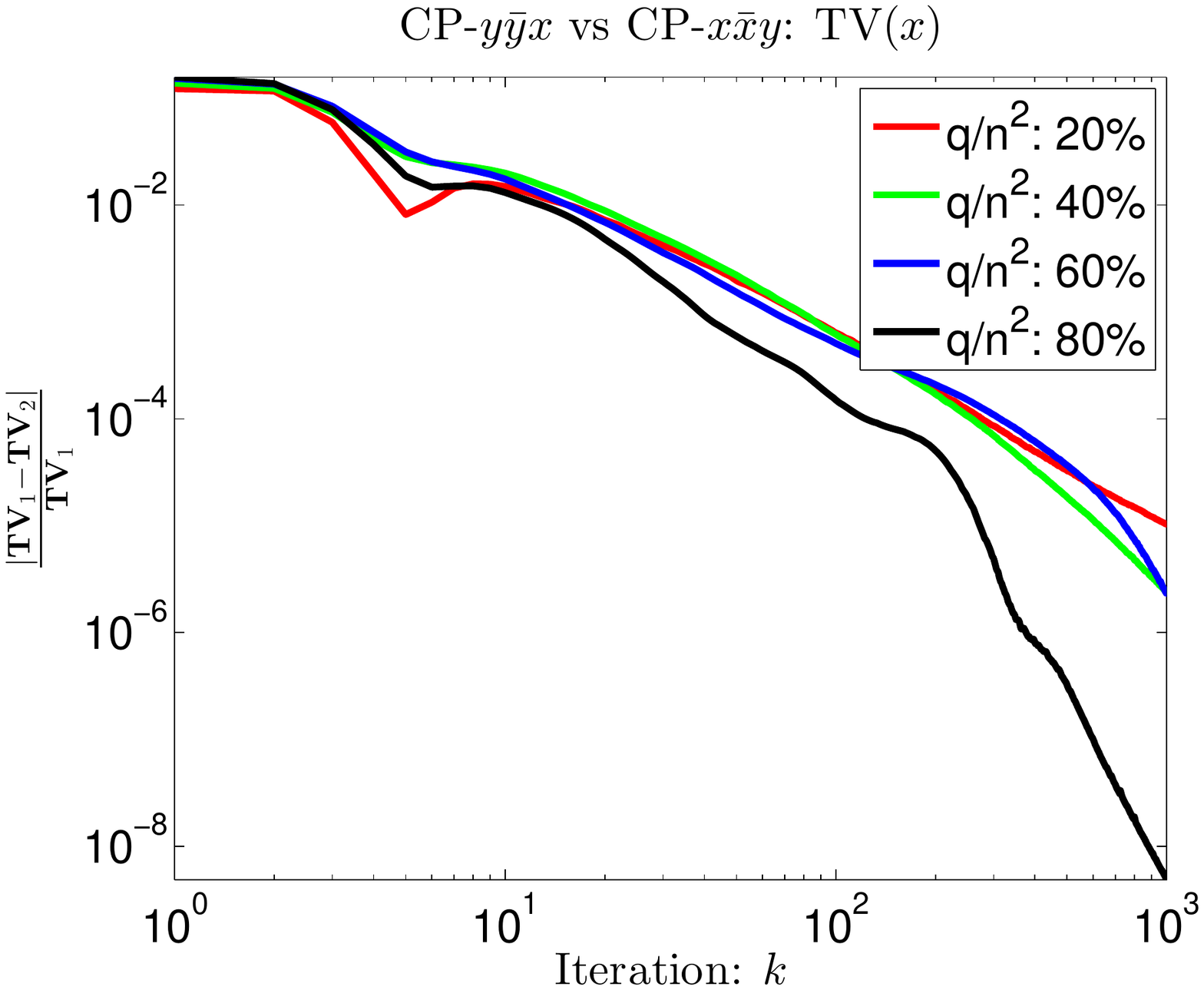}%\hspace{-0.5cm}
\includegraphics[trim = 0 240 50  160, scale = 0.28]{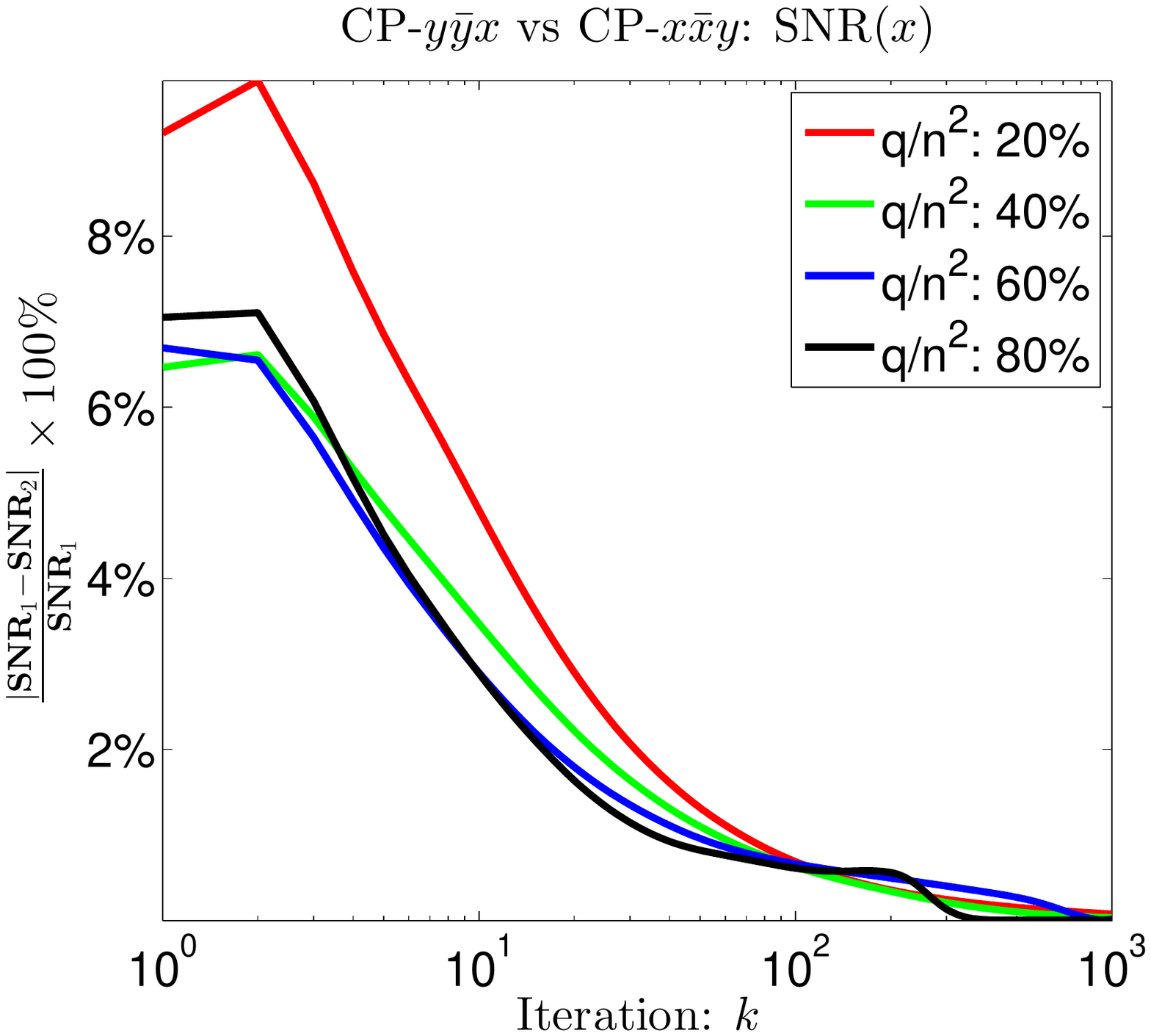}\\ \vspace{1cm}
\includegraphics[trim = 0 240 50  160, scale = 0.28]{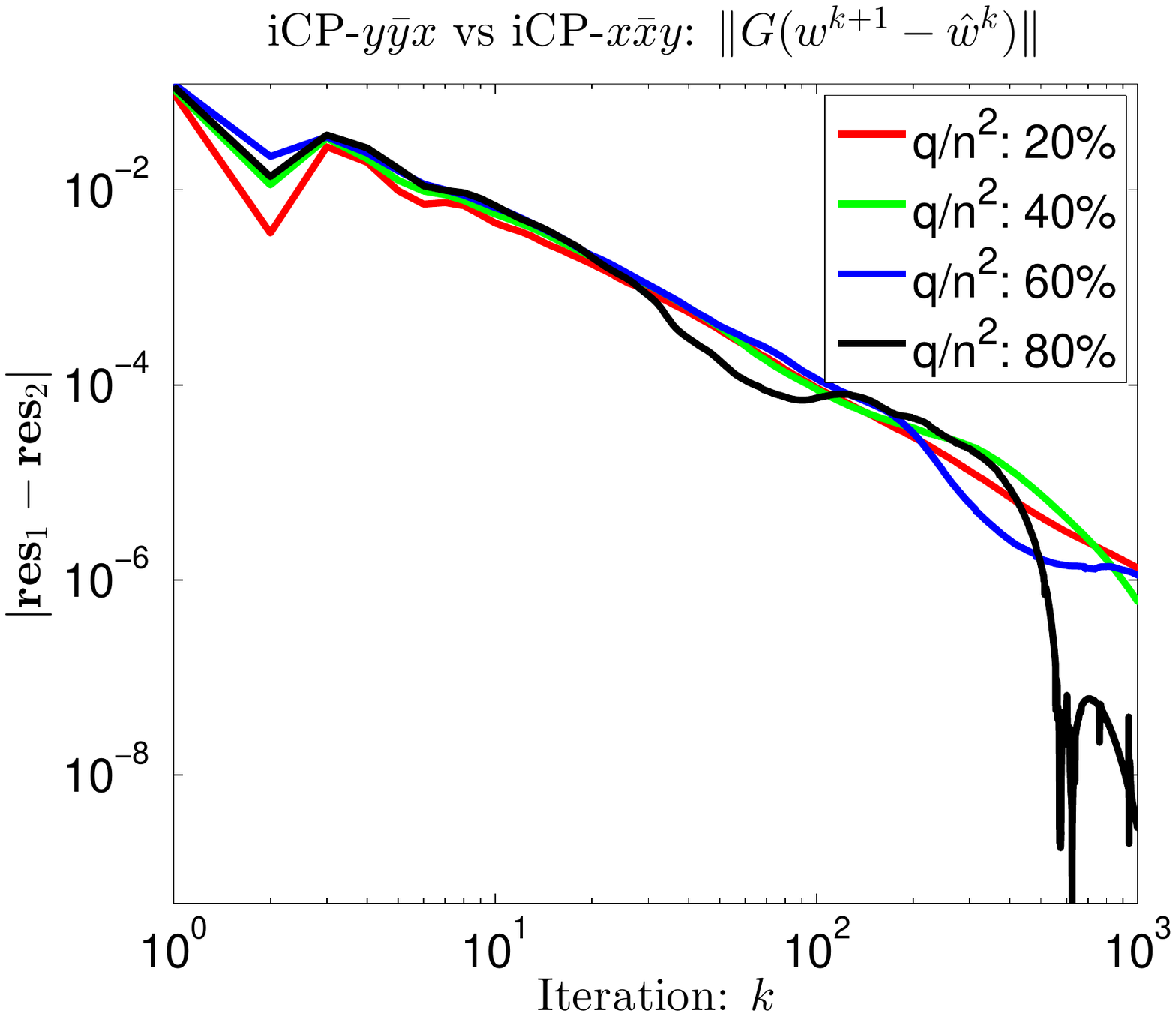}%\hspace{-0.5cm}
\includegraphics[trim = 0 240 50  160, scale = 0.28]{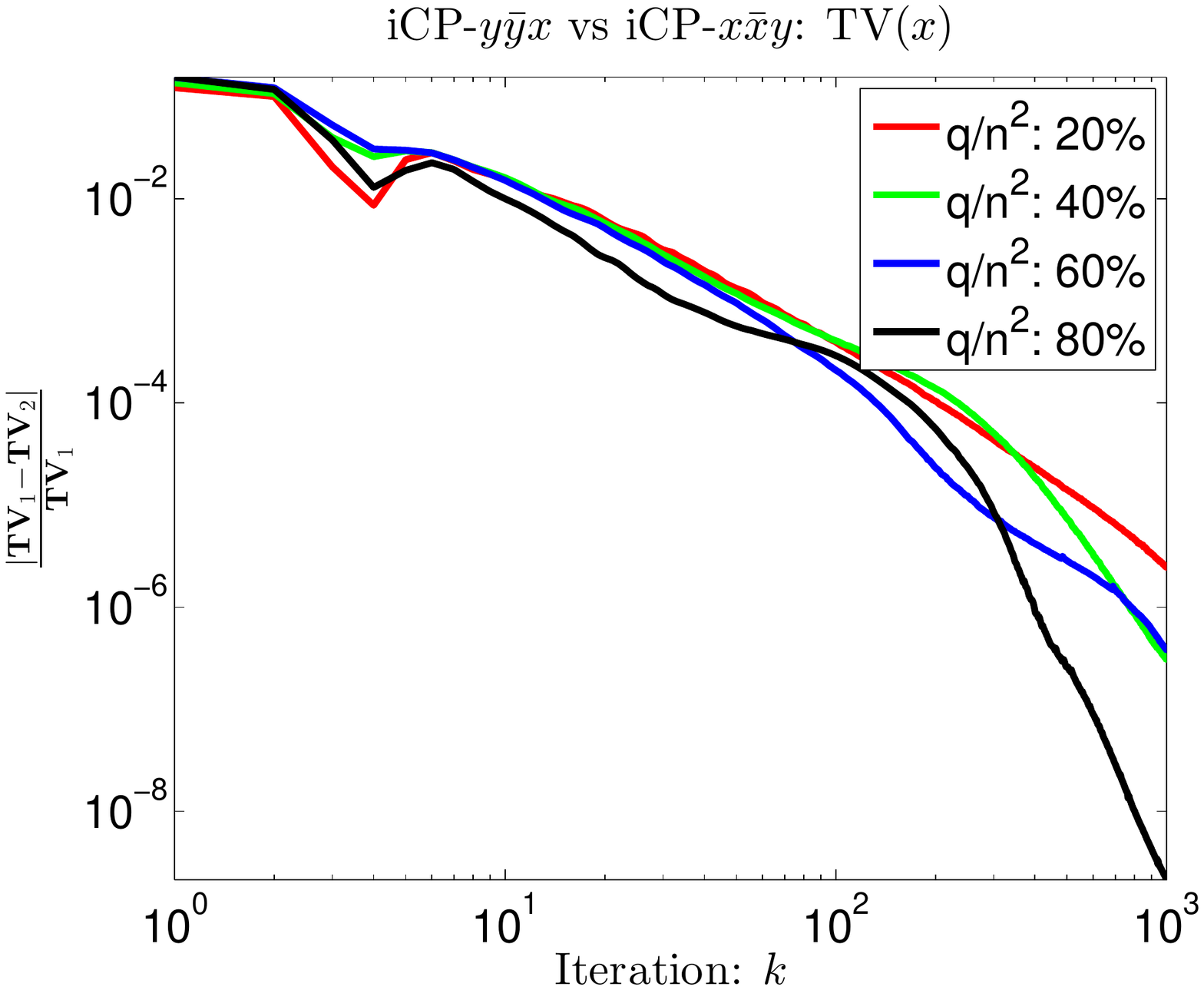}%\hspace{-0.5cm}
\includegraphics[trim = 0 240 50  160, scale = 0.28]{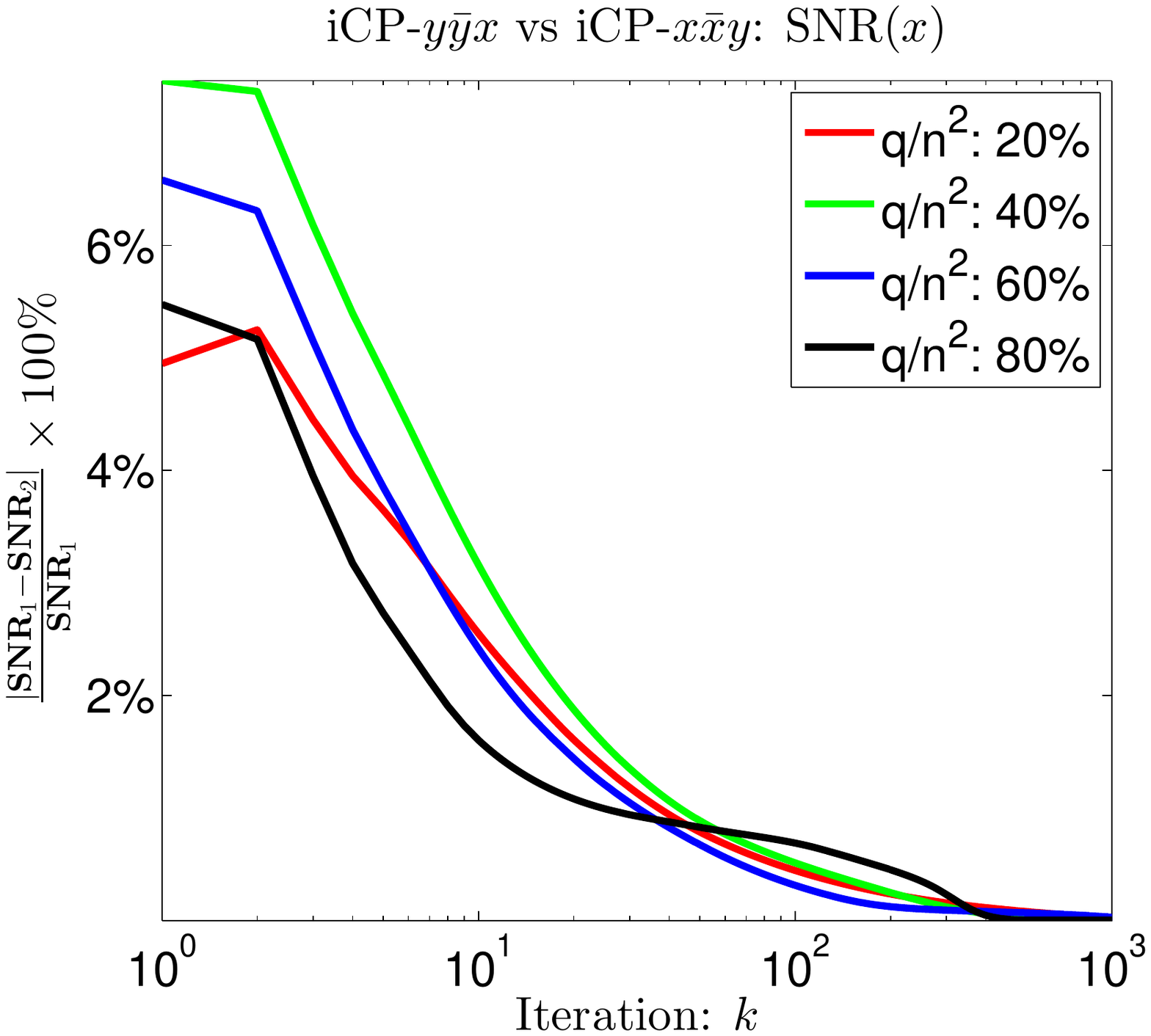} \\ \vspace{.5cm}
}\caption{Comparison results of CPAs and iCPAs. First row: CP-$y\bar yx$ vs CP-$x\bar xy$; Second row: iCP-$y\bar yx$ vs iCP-$x\bar xy$.}
\label{Fig2-CPyyx-CPxxy}
\end{figure}

For illustrative purpose, here we present some computational results on the cameraman image with four levels of measurements, i.e., $q/n^2 \in \{20\%,40\%,60\%,80\%\}$. We used the parameters specified in Section \ref{sc:num-parameters}, run each algorithm for 1000 iterations  and recorded the optimality residues, the objective function values and the SNRs of the generated sequences. These quantities  are denoted by ${\bf res}_i$, ${\bf TV}_i$ and ${\bf SNR}_i$, where $i=1$ for CP-$y\bar yx$ and iCP-$y\bar yx$, and $i=2$ for CP-$x\bar xy$ and iCP-$x\bar xy$.
Note that, for inertial CPAs, the optimality residue is defined as ${\bf res} := \|G(w^{k+1} - \hat w^k)\|$ with $G$ given by \eqref{def:G-yyx} and \eqref{def:G-xxy} for  iCP-$y\bar yx$ and iCP-$x\bar xy$, respectively. This definition is justified by \eqref{iLADM-mVI}.
The comparison results of CP-$y\bar yx$ and CP-$x\bar xy$ (resp. iCP-$y\bar yx$ and iCP-$x\bar xy$) on these quantities are given in the first (resp. the second) row of Figure \ref{Fig2-CPyyx-CPxxy}, where we presented the differences (in either absolute or relative sense) of these quantities.

It is easy to observe that the results given in Figure \ref{Fig2-CPyyx-CPxxy} basically justify our remarks given above in this subsection.
Therefore, in our experiments we only compare CP-$y\bar yx$ and its inertial version iCP-$y\bar yx$.
Experimental results for CP-$x\bar xy$ and iCP-$x\bar xy$ are not presented since they are similar to those of CP-$y\bar yx$ and iCP-$y\bar yx$, respectively.

\subsection{Experimental results}
Recall that the image size is denoted by $n\times n$, and the number of measurements is denoted by $q$. For each image, we tested four levels of measurements, that is $q/n^2 \in \{20\%,40\%,60\%,80\%\}$.
To implement the algorithms, a computation of the form ``$y \leftarrow \text{prox}_{\sigma}^{g^*}(y + \sigma Ax)$"  must be carried out at each iteration.
In implementation, this is completed by using the Moreau's decomposition \eqref{lem:Moreau-eq}, i.e., compute an intermediate variable $u$ first
as ``$u \leftarrow \text{prox}_{\sigma^{-1}}^{g^*}(\sigma^{-1}y +  Ax)$" and then recover $y$ via ``$y\leftarrow y + \sigma (Ax-u)$".
Here the quantity $\|u-Ax\|$ can be viewed as primal residue for the equivalent constrained formulation (P$_2$).
In the experimental results, besides SNR and objective function value, we also present this feasibility residue, measured by infinity norm $\|u - Ax\|_{\infty}$, and the number of iterations required by the algorithms (denoted, respectively, by It1 and It2 for CP-$y\bar yx$ and iCP-$y\bar yx$) to meet the condition \eqref{stop-rule} or \eqref{stop-rule-iCPA}.
We do not present the CPU time results for comparison because the per-iteration cost of the algorithms is roughly identical and the consumed CPU time is basically proportional to the respective number of iterations.
Detailed experimental results for $\varepsilon=10^{-2}, 10^{-3}$ and $10^{-4}$ are given in Tables \ref{table-tol=1e-2}-\ref{table-tol=1e-4}, respectively.
Note that in  Tables \ref{table-tol=1e-2}-\ref{table-tol=1e-4} the results for $\text{TV}(x)$ and $\|u-Ax\|_{\infty}$ are given in scientific notation,
where the first number denotes the significant digit  and the second denotes the power.

 \begin{table}\caption{Experimental results for $\varepsilon = 10^{-2}$ ($\sigma=5, \tau = 0.124/\sigma, \alpha_k\equiv\alpha=0.28$, $x^0=\calB^*b$, $y^0=0$).}
\begin{center}
\begin{footnotesize}
\begin{tabular}{|c|c|c||r|c|r|r||r|c|r|r||c|}\hline
\mc{3}{|c||}{ } & \mc{4}{c||}{CP-$y\bar yx$} & \mc{4}{c||}{iCP-$y\bar yx$} & \\ [1pt] \cline{1-11}
$q/n^2$ & \mc{1}{|c|}{$n$} & \mc{1}{c||}{image}
& \mc{1}{c|}{TV($x$)}  & \mc{1}{c|}{$\|u-Ax\|_{\infty}$} & \mc{1}{c|}{SNR} & \mc{1}{c||}{It1}
& \mc{1}{c|}{TV($x$)}  & \mc{1}{c|}{$\|u-Ax\|_{\infty}$} & \mc{1}{c|}{SNR} & \mc{1}{c||}{It2}
& \mc{1}{c|}{$\frac{\text{It2}}{\text{It1}}$} \\  [2pt] \hline\hline

%\begin{tabular}{|c|c|c||r|r|r||r|r|r||c|}\hline
%\mc{3}{|c||}{ } & \mc{3}{c||}{CPA} & \mc{3}{c||}{iCPA} & \\ [1pt] \cline{1-9}
%%
%$q/n^2$ & \mc{1}{|c|}{$n$} & \mc{1}{c||}{image}
%& \mc{1}{c|}{TV($x$)}  & \mc{1}{c|}{SNR} & \mc{1}{c||}{It1}
%& \mc{1}{c|}{TV($x$)}  & \mc{1}{c|}{SNR} & \mc{1}{c||}{It2}
%& \mc{1}{c|}{$\frac{\text{It2}}{\text{It1}}$} \\  [2pt] \hline\hline
%
%
\input{results/8-Aug-2014/eqcon-pct-040608-tol=1e-2.dat}
\end{tabular}\label{table-tol=1e-2}
\end{footnotesize}
\end{center}
\end{table}

 \begin{table}\caption{Experimental results for $\varepsilon = 10^{-3}$ ($\sigma=5, \tau = 0.124/\sigma, \alpha_k\equiv\alpha=0.28$, $x^0=\calB^*b$, $y^0=0$).}
\begin{center}
\begin{footnotesize}
\begin{tabular}{|c|c|c||r|c|r|r||r|c|r|r||c|}\hline
\mc{3}{|c||}{ } & \mc{4}{c||}{CP-$y\bar yx$} & \mc{4}{c||}{iCP-$y\bar yx$} & \\ [1pt] \cline{1-11}
$q/n^2$ & \mc{1}{|c|}{$n$} & \mc{1}{c||}{image}
& \mc{1}{c|}{TV($x$)}  & \mc{1}{c|}{$\|u-Ax\|_{\infty}$} & \mc{1}{c|}{SNR} & \mc{1}{c||}{It1}
& \mc{1}{c|}{TV($x$)}  & \mc{1}{c|}{$\|u-Ax\|_{\infty}$} & \mc{1}{c|}{SNR} & \mc{1}{c||}{It2}
& \mc{1}{c|}{$\frac{\text{It2}}{\text{It1}}$} \\  [2pt] \hline\hline

\input{results/8-Aug-2014/eqcon-pct-040608-tol=1e-3.dat}
\end{tabular}\label{table-tol=1e-3}
\end{footnotesize}
\end{center}
\end{table}

 \begin{table}\caption{Experimental results for $\varepsilon = 10^{-4}$ ($\sigma=5, \tau = 0.124/\sigma, \alpha_k\equiv\alpha=0.28$, $x^0=\calB^*b$, $y^0=0$).}
\begin{center}
\begin{footnotesize}
\begin{tabular}{|c|c|c||r|c|r|r||r|c|r|r||c|}\hline
\mc{3}{|c||}{ } & \mc{4}{c||}{CP-$y\bar yx$} & \mc{4}{c||}{iCP-$y\bar yx$} & \\ [1pt] \cline{1-11}
$q/n^2$ & \mc{1}{|c|}{$n$} & \mc{1}{c||}{image}
& \mc{1}{c|}{TV($x$)}  & \mc{1}{c|}{$\|u-Ax\|_{\infty}$} & \mc{1}{c|}{SNR} & \mc{1}{c||}{It1}
& \mc{1}{c|}{TV($x$)}  & \mc{1}{c|}{$\|u-Ax\|_{\infty}$} & \mc{1}{c|}{SNR} & \mc{1}{c||}{It2}
& \mc{1}{c|}{$\frac{\text{It2}}{\text{It1}}$} \\  [2pt] \hline\hline

\input{results/8-Aug-2014/eqcon-pct-040608-tol=1e-4.dat}
\end{tabular}\label{table-tol=1e-4}
\end{footnotesize}
\end{center}
\end{table}

It can be seen from Tables \ref{table-tol=1e-2}-\ref{table-tol=1e-4} that, to obtain solutions satisfying the aforementioned conditions,  iCP-$y\bar yx$ is generally faster than CP-$y\bar yx$.  Specifically, within our setting  the numbers of iterations consumed by iCP-$y\bar yx$ range, roughly, from $70\%$--$80\%$ of those consumed by CP-$y\bar yx$.
In most cases, iCP-$y\bar yx$ obtained recovery results with slightly better final objective function values and feasibility residues.
The quality of recovered images is also slightly better in terms of SNR.
By comparing results between different tables, we see that high accuracy solutions in optimization point of view generally imply better image quality measured by SNR.
This could imply that solving the problem to a certain high accuracy is in some sense necessary for better recovery, though the improvement of image quality could be small when the solution
is already very accurate.
It can also be observed from the results that both algorithms converge very fast at the beginning stage and slow down afterwards. In particular, to improve the solution quality by
one more digit of accuracy (measured by optimality residue defined in \eqref{stop-rule}-\eqref{stop-rule-iCPA}), the number of iterations could be multiplied by a few times, which is probably a common feature of first-order optimization algorithms.
The fact is that in most cases one does not need to solve imaging problems to extremely high accuracy,
because the recovered results hardly have any difference detectable by human eyes  when they are already accurate enough.
In words, the inertial technique accelerates the original algorithm to some extent without increasing the total computational cost.

To better visualize the performance improvement of iCP-$y\bar yx$ over CP-$y\bar yx$, we reorganized the results given in Tables \ref{table-tol=1e-2}-\ref{table-tol=1e-4}
and presented them in Figure \ref{Fig-m3}. For each measurement level $q/n^2$ and image size $n$, we accumulated the number of iterations for
different images and took an average. The results for $\varepsilon = 10^{-2}, 10^{-3}$ and $10^{-4}$ are given in Figure \ref{Fig-m3}. By comparing the
three plots in Figure \ref{Fig-m3}, we see that the number of iterations increased from a few dozens to around one thousand when the accuracy tolerance
$\varepsilon$ was decreased from $10^{-2}$ to $10^{-4}$. From the results we can also observe that, on average, both algorithms perform stably in the sense that the consumed number of iterations do not vary much for different image sizes.

\begin{figure}[htbp]
\centering{
\includegraphics[trim = 0 200 50  160, scale = 0.27]{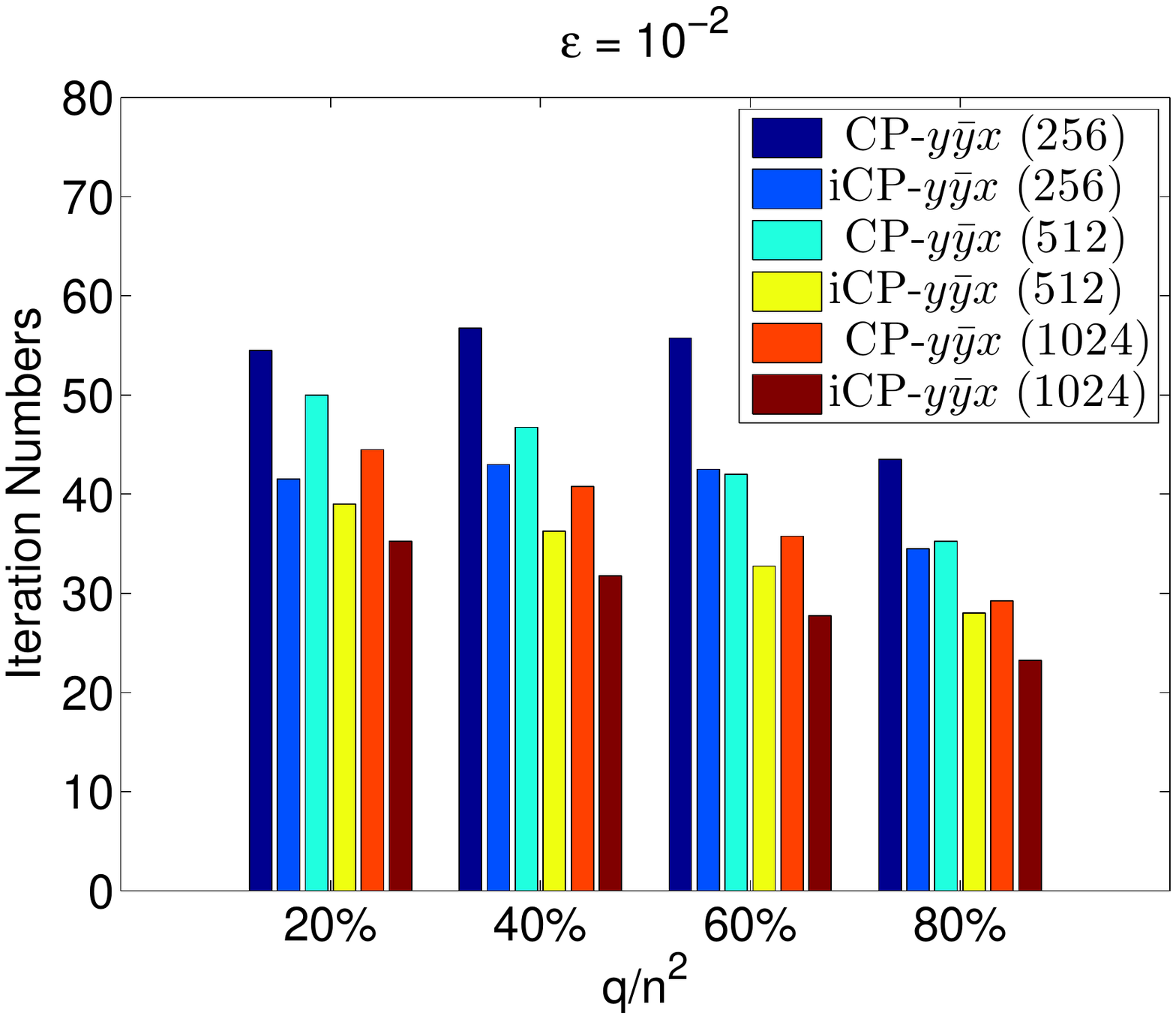}%\hspace{-.8cm}
\includegraphics[trim = 0 200 50  160, scale = 0.27]{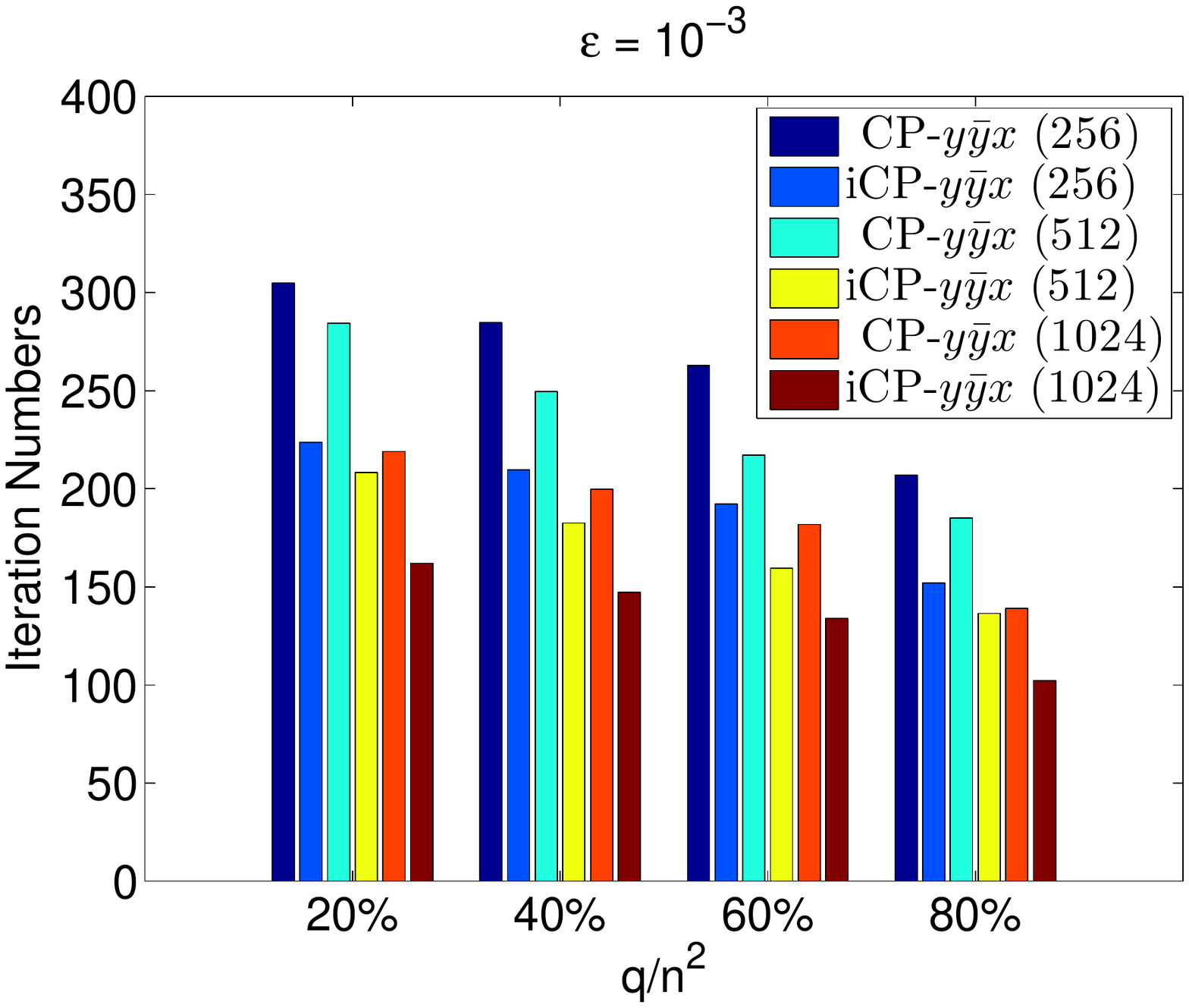}%\hspace{-.8cm}
\includegraphics[trim = 0 200 50  160, scale = 0.27]{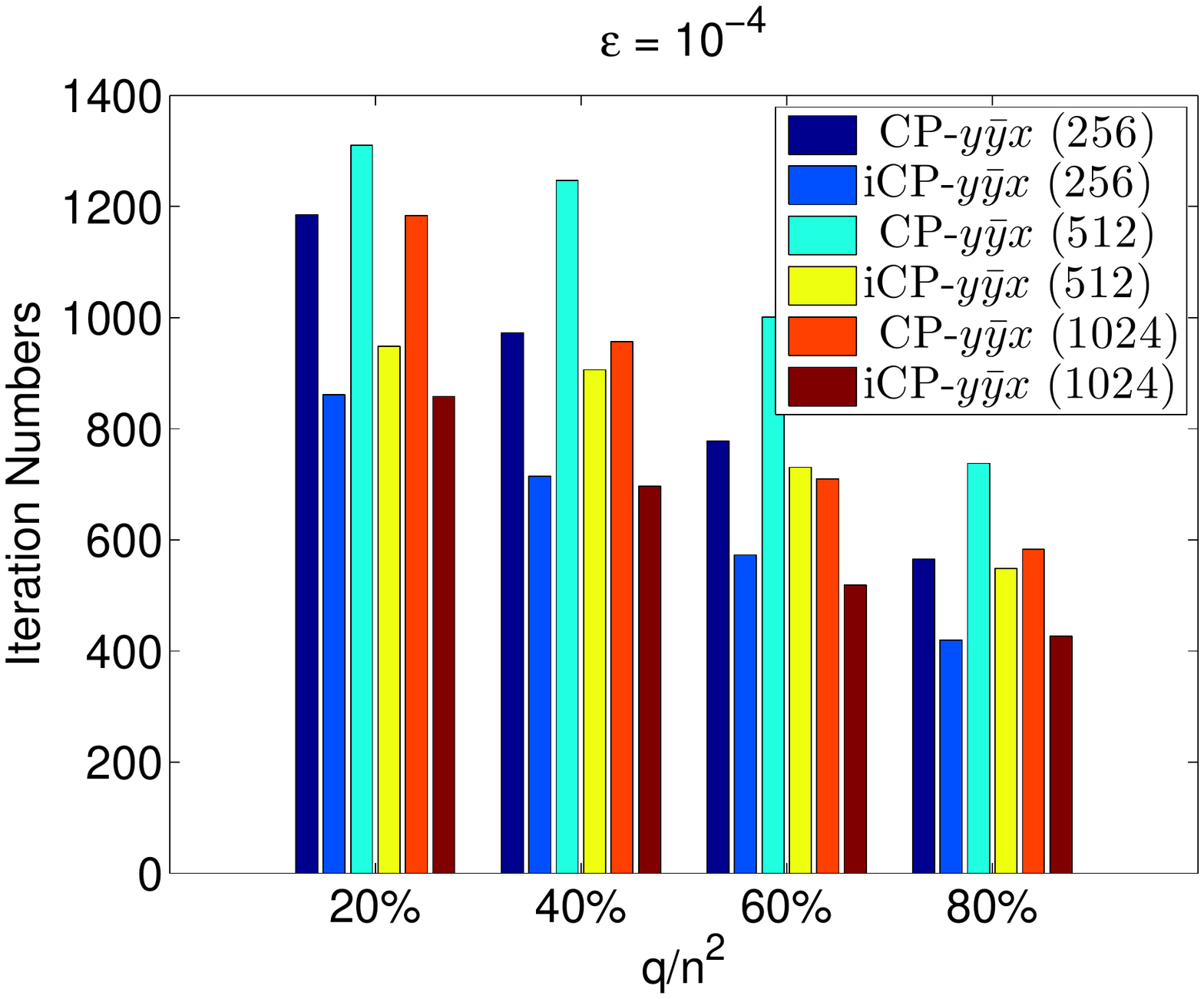}
}\caption{Comparison results of CP-$y\bar yx$ and iCP-$y\bar yx$ on different image  sizes  and stopping tolerance ($n=256,512,1024$, and from left to right $\varepsilon = 10^{-2}, 10^{-3}, 10^{-4}$, respectively). }
\label{Fig-m3}
\end{figure}

We also examined the performance of iCP-$y\bar yx$ with different constant strategies for the inertial extrapolation stepsize $\alpha_k$. In particular, for $n=1024$ we tested $\alpha_k\equiv\alpha \in \{0.05, 0.15, 0.25, 0.35\}$. The results are given in Figure \ref{Fig-alp4}. It can be seen from the results that, for the four tested $\alpha$ values, larger ones generally give better performance. Recall that, according to our analysis, iCP-$y\bar yx$ is guaranteed to converge under the condition $0\leq \alpha_k\leq \alpha_{k+1}\leq \alpha < 1/3$ for all $k$. Indeed, we have observed that iCP-$y\bar yx$ either slows down or performs instable for large values of $\alpha$, say, larger than $0.3$, especially when the number of measurements is relatively small. This is the main reason that we set $\alpha_k$ a constant value that is near $0.3$ but not larger. Similar discussions for compressive principal component pursuit problems can be found in \cite{CMY14a}.

\begin{figure}[htbp]
\centering{
\includegraphics[trim = 0 200 50  160, scale = 0.27]{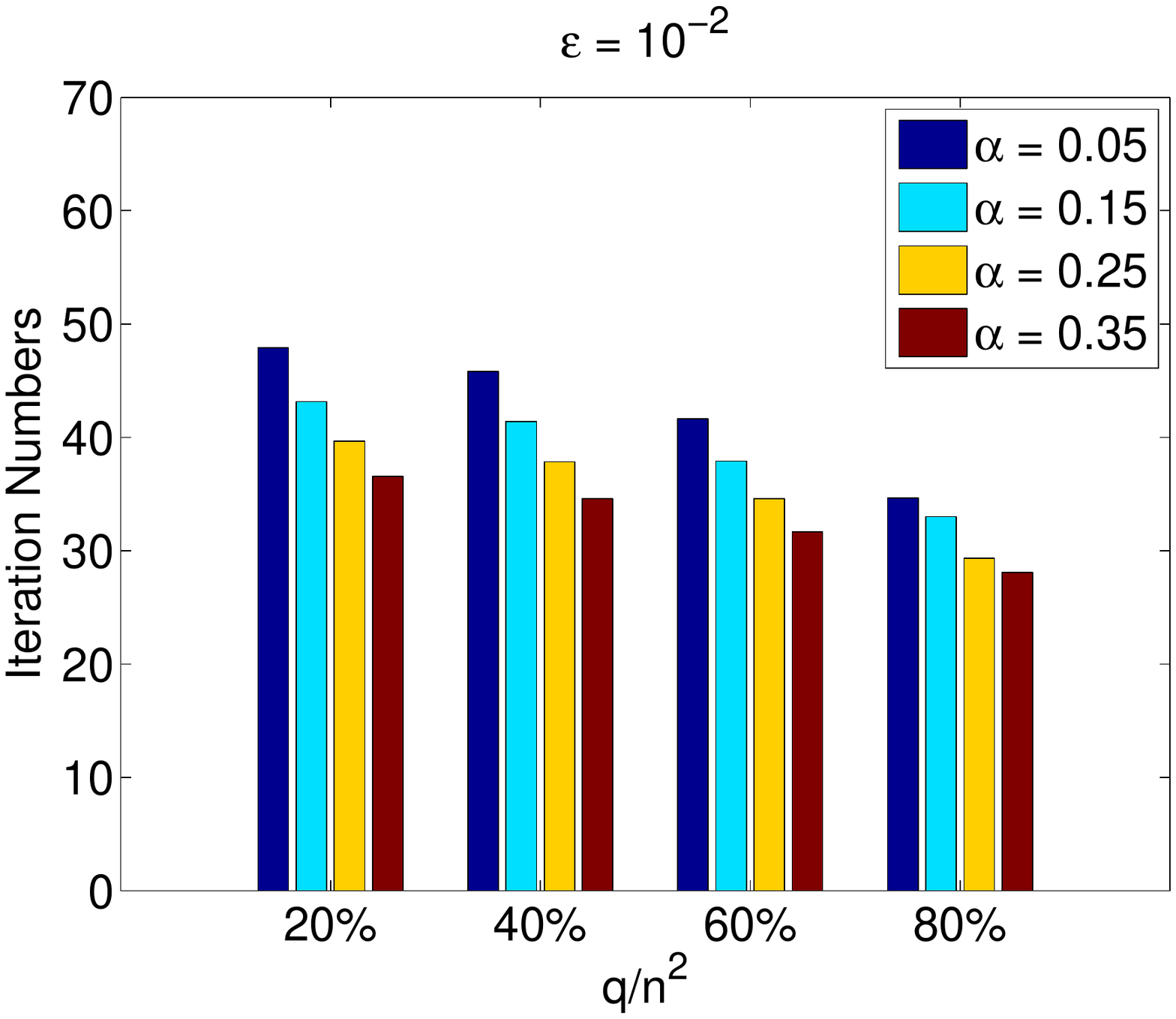}%\hspace{-.8cm}
\includegraphics[trim = 0 200 50  160, scale = 0.27]{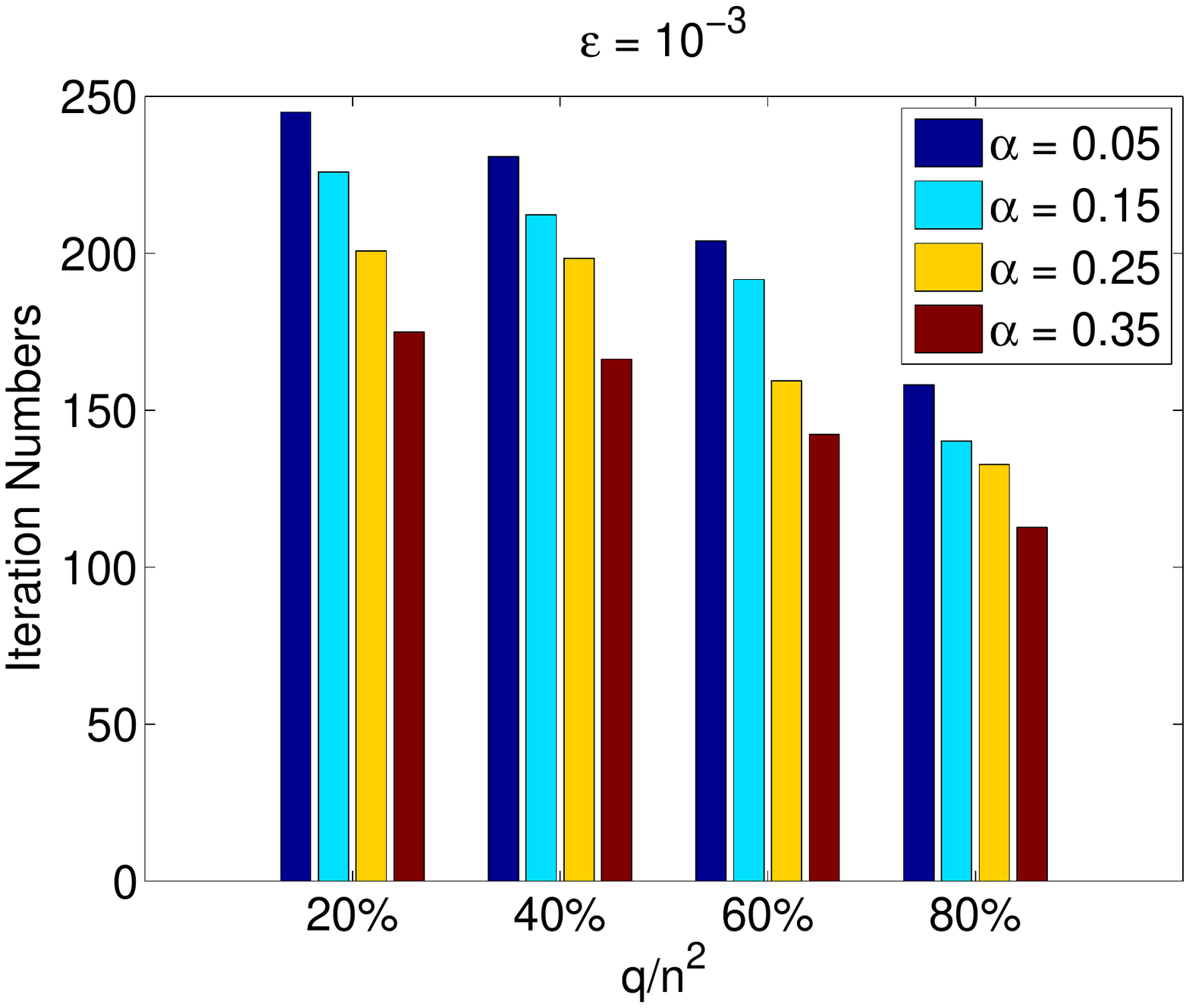}%\hspace{-.8cm}
\includegraphics[trim = 0 200 50  160, scale = 0.27]{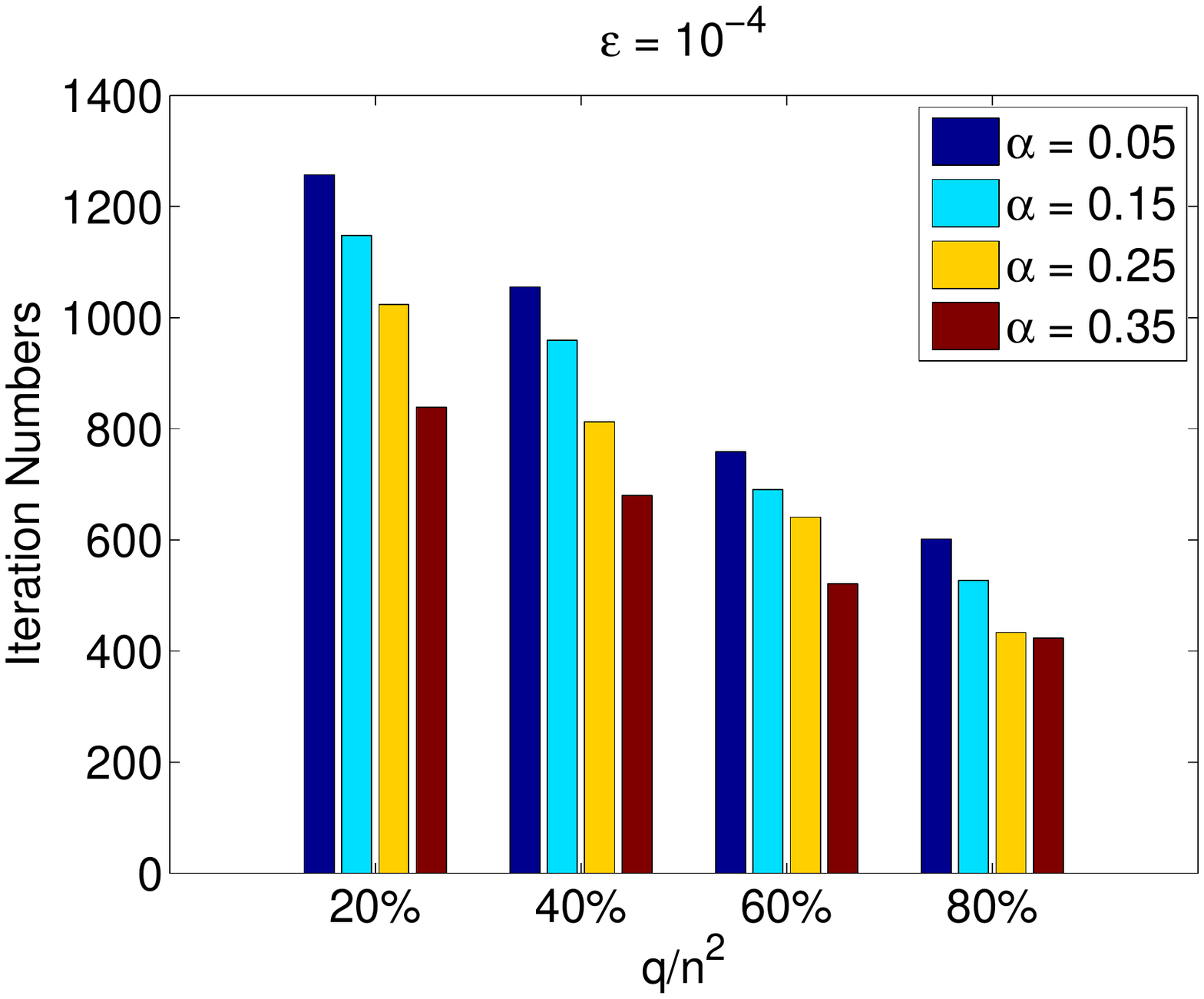}
}\caption{Comparison results of iCP-$y\bar yx$ on different $\alpha_k\equiv\alpha$ and stopping tolerance ($\alpha\in\{0.05,0.15,0.25,0.35\}$,
and from left to right  $\varepsilon = 10^{-2}, 10^{-3}, 10^{-4}$, respectively).}
\label{Fig-alp4}
\end{figure}

\section{Concluding remarks}\label{sc:concluding}
In this paper, based on the observations given in \cite{CP11,ST14}, we showed that CPAs and LADMs generate exactly the same sequence of points if the initial points for LADMs are properly chosen. The dependence on initial points for LADM can be relaxed by focusing on cyclically equivalent forms of the algorithms.
By using the fact that CPAs are applications of a general PPM to the KKT system, we were able to propose
inertial CPAs for solving structured convex optimization problem. Under certain conditions, the global point-convergence, nonasymptotic $O(1/k)$ and asymptotic $o(1/k)$ rate of convergence of the proposed inertial CPAs are guaranteed. These convergence rate results are previously not known for inertial type methods.
Our preliminary implementation of the algorithms and extensive experimental results on TV based image reconstruction problems have shown that inertial CPAs are generally faster than the corresponding original CPAs. Though in a sense the acceleration is not very significant, the inertial extrapolation step does not introduce any additional yet unnegligible computational cost either.

The extrapolation steplength $\alpha_k$ was set to be constant in our experiments, which was determined based on experimental results. How to select $\alpha_k$ adaptively such that the overall performance is stable and more efficient deserves further investigation. Moreover, the requirement that $\{\alpha_k\}_{k=0}^\infty$ is nondecreasing seems not reasonable either. Interesting topics for future research may be to relax the conditions on $\{\alpha_k\}_{k=0}^\infty$, to improve the convergence rate results and to propose modified inertial type algorithms so that the extrapolation stepsize $\alpha_k$ can be significantly enlarged.

\bibliographystyle{plain}
\def\cprime{$'$}

\end{document}